\newcommand{\Var} {\mathrm {Var}}
\newcommand{\Prob} {\mathbb {P}}
\newcommand{\prob}[1]{\Prob\left(#1\right)}
\newcommand{\nc}{\mathcal{N}}
\newcommand{\tc}{\mathcal{T}}
\newcommand{\htc}{\hat{\tc}}
\newcommand{\E}{\mathbb{E}}
\newcommand{\I} {\mathbb{I}}
\newcommand\dto{\overset{\mathrm{d}}{\to}}
\newcommand{\po}{{^{(M)}}}
\newtheorem{theorem}{Theorem}
\newtheorem{lemma}[theorem]{Lemma}
\newtheorem{proposition}[theorem]{Proposition}
\newtheorem*{conjecture*}{Conjecture}
\newtheorem*{proposition*}{Proposition}
\theoremstyle{remark}
\newtheorem{remark}[theorem]{Remark}
\newtheorem*{remark*}{Remark}
\begin{document}
\author{Dimbinaina Ralaivaosaona}
\address{Dimbinaina Ralaivaosaona and Stephan Wagner \\ Department of Mathematical Sciences \\ Stellenbosch University \\ Private Bag X1 \\ Matieland 7602, South Africa.}
\email{\{naina,swagner\}@sun.ac.za}
\author{Matas \v{S}ileikis}
\address{Matas \v{S}ileikis \\ The Czech Academy of Sciences \\ Institute of Computer Science \\ Pod Vod\'{a}renskou v\v{e}\v{z}\'{\i} 2 \\ 182 07 Prague, Czech Republic.}
\email{matas.sileikis@gmail.com}
\author{Stephan Wagner}
\title{A central limit theorem for almost local additive tree functionals}
\date{\today}
\keywords{Galton-Watson trees, additive functional, almost local, central limit theorem}
\thanks{The first author was partially supported by the Division for Research Development (DRD) of Stellenbosch
University. The second author was supported by the Czech Science Foundation, grant number GJ16-07822Y, with institutional support RVO:67985807. The third author was supported by the National Research Foundation of South Africa, grant 96236. An extended abstract of this paper appeared in the Proceedings of the 29th International Conference on Probabilistic, Combinatorial and Asymptotic Methods for the Analysis of Algorithms, AofA 2018, see \cite{RSW18}}

\begin{abstract}
An additive functional of a rooted tree is a functional that can be calculated recursively as the sum of the values of the functional over the branches, plus a certain toll function. Janson recently proved a central limit theorem for additive functionals of conditioned Galton-Watson trees under the assumption that the toll function is local, i.e.~only depends on a fixed neighbourhood of the root. We extend his result to functionals that are ``almost local'' in a certain sense, thus covering a wider range of functionals. The notion of almost local functional intuitively means that the toll function can be approximated well by considering only a neighbourhood of the root. Our main result is illustrated by several explicit examples including natural graph theoretic parameters such as  the number of independent sets, the number of matchings, and the number of dominating sets. We also cover a functional stemming from a tree reduction process that was studied by Hackl, Heuberger, Kropf, and Prodinger.\end{abstract}

\maketitle

\section{Introduction}

A functional $F$ that associates a value $F(T)$ with every rooted tree is said to be \emph{additive} if it satisfies a recursion of the form
\begin{equation}\label{eq:additive}
F(T) = \sum_{i=1}^k F(T_i) + f(T),
\end{equation}
where $T_1,T_2,\ldots,T_k$ are the branches of $T$ and $f$ is a so-called ``toll function'', another function that assigns a value to every rooted tree. If $T$ only consists of the root (so that $k=0$), we interpret the empty sum as $0$ and set $F(T) = f(T)$. Of course, every functional $F$ is additive in this sense (for a suitable choice of $f$), so the usefulness of the concept depends on what is known about the toll function $f$.

An important special case of an additive functional is the number of occurrences of a prescribed ``fringe subtree''. A fringe subtree is an induced subtree of a rooted tree that consists of one of the nodes and all its descendants. Now fix a rooted tree $S$. We say that $S$ occurs on the fringe of $T$ if there is a fringe subtree of $T$ that is isomorphic to $S$ (when we consider ordered trees, where the order of branches matters, ``isomorphic'' is to be understood in the ordered sense as well). The number of occurrences of $S$ as a fringe subtree in $T$ (i.e., the number of nodes $v$ of $T$ for which the fringe subtree rooted at $v$ is isomorphic to $S$) is an additive functional, which we shall denote by $F_S(T)$. Indeed, one has
$$F_S(T) = \sum_{i=1}^k F(T_i) + f_S(T),$$
where
$$f_S(T) = \begin{cases} 1 & S \text{ is isomorphic to } T, \\ 0 & \text{otherwise.} \end{cases}$$
This is because an occurrence of $S$ in $T$ is either an occurrence in one of the branches, or comprises the entire tree $T$. Every additive functional can be expressed as a linear combination of these elementary functionals: it is easy to see (for example~by induction) that a functional satisfying~\eqref{eq:additive} can be expressed as
$$F(T) = \sum_S f(S) F_S(T).$$
Functionals of the form $F_S$ are known to be asymptotically normally distributed in different classes of trees, notably simply generated trees/Galton-Watson trees \cite{J16,W15}, which will also be the topic of this paper, and classes of increasing trees \cite{HJS17,RW16}. In view of this and several other important examples of additive functionals that satisfy a central limit theorem, general schemes have been devised that yield a central limit theorem under different technical assumptions. This includes work on simply generated trees/Galton-Watson trees \cite{J16,W15} (labelled trees, plane trees and $d$-ary trees are well-known special cases) as well as  P\'olya trees \cite{W15} and increasing trees \cite{RW16,W15} (specifically recursive trees, $d$-ary increasing trees and generalised plane-oriented recursive trees). It is worth mentioning, however, that there are also many instances of additive functionals that are not normally distributed in the limit, since the toll functions can be quite arbitrary. A well-known example is the case of the path length, i.e.~ the sum of the distances of all nodes to the root. It satisfies~\eqref{eq:additive} with toll function
$$f(T) = |T|-1,$$
and, when suitably normalised, its limiting distribution for simply generated trees is the Airy distribution (see \cite{T91}).

Previous results \cite{HJS17,J16,RW16,W15}, while giving rather general conditions on the toll function that imply normality, are unfortunately still insufficient to cover all possible examples one might be interested in. This paper is essentially an extension of Janson's work \cite{J16} on local functionals. By weakening the conditions he makes on the toll functions, we arrive at a new general central limit theorem that can be applied to a variety of examples that were not previously covered. Several such examples are presented in detail in this paper including natural graph theoretical parameters and an open problem from a paper of Hackl, Heuberger, Kropf and Prodinger \cite{HHKP17} on tree reductions.

A \emph{local} functional (as considered in Janson's paper \cite{J16}) is a functional for which the value of the toll function can be determined from the knowledge of a fixed neighbourhood of the root. A typical example is the number of nodes with a given outdegree $r$, where the corresponding toll function is completely determined by the root degree: its value is $1$ if the root degree is $r$, and $0$ otherwise. We relax this condition somewhat (to what we call ``almost local functionals'') in our main theorem. Intuitively speaking, functionals that satisfy our conditions have toll functions that can be approximated well from knowledge of a neighbourhood of the root, with the approximation getting better the wider the neighbourhood is chosen.

The model of random trees that we consider here are \emph{conditioned Galton-Watson trees}: these are determined by an offspring distribution $\xi$, which we will assume to satisfy $\E \xi = 1$. We also assume that $\mathrm{Var} \xi$ is finite and nonzero (to avoid a degenerate case). The Galton-Watson process starts from a single node, the root. At time $t$, all nodes at level/depth $t$ (distance $t$ from the root) generate a number of children according to the offspring distribution $\xi$. The numbers of children of different nodes on the same level are mutually independent. The outcome of this process, which ends when all nodes at level $t$ generate $0$ children, is a random tree $\tc$ (almost surely finite). By conditioning the process to ``die out'' when the total number of nodes is $n$ (of course, we only consider $n$ for which such an event occurs with nonzero probability) we obtain a conditioned Galton-Watson tree, which will be denoted by $\tc_n$.

Conditioned Galton-Watson trees are known to be essentially equivalent to so-called \emph{simply generated trees} \cite[Section 3.1.4]{D09}. Classical examples include rooted labelled trees (corresponding to a Poisson distribution for $\xi$), plane trees (corresponding to a geometric distribution for $\xi$) and binary trees (with a distribution whose support is $\{0,2\}$).

We conclude the introduction with some more notation: for a tree $T$, we let $T^{(M)}$ be its restriction to the first $M$ levels, i.e.~all nodes whose distance to the root is at most $M$. A local functional as defined above is thus a functional for which the value of $f(T)$ is determined by $T^{(M)}$ for some fixed $M$ (the ``cut-off''). 
The conditioned Galton-Watson tree $\tc_n$ is known to converge in the local topology induced by these restrictions to the (infinite) \emph{size-biased} Galton-Watson tree $\htc$ as defined by Kesten \cite{K86}, see also \cite{J12}: one has
$$\Prob(\htc^{(M)} = T) = w_M(T) \Prob(\tc^{(M)} = T)$$
for all trees $T$, where $w_M(T)$ is the number of nodes of depth $M$ in $T$. 

For a rooted tree $T$ (possibly infinite), we let $\deg(T)$ denote the degree of the root of $T$. Finally, it will be convenient for us to use the Vinogradov notation $\ll$ interchangeably with the $O$-notation, i.e.~$f(n) \ll g(n)$ and $f(n) = O(g(n))$ both mean that $|f(n)| \leq K g(n)$ for a fixed positive constant $K$ and all sufficiently large $n$.

\section{The general theorem}

Let us now formulate our main result, which is a central limit theorem for additive functionals under suitable technical conditions on the toll function $f$.

\begin{theorem}\label{thm:normality}
	Let $\tc_n$ be a conditioned Galton-Watson tree of order $n$ with offspring distribution $\xi$, where $\xi$ satisfies $\E \xi = 1$ and $0<\sigma^2:=\mathrm{Var} \xi <\infty$. Assume further that $\E \xi^{2\alpha+1} < \infty$ for some integer $\alpha\geq 0$. Consider a functional $f$ of finite rooted ordered trees with the property that  
	\begin{equation}\label{eq:thm1-1}
	f(T) = O(\deg (T)^{\alpha}).
	\end{equation}
	Furthermore, assume that there exists a sequence $(p_M)_{M\geq 1}$ of positive numbers with  $p_M\to 0$, as $M\to\infty$, such that
	
	\begin{itemize}
		\item for every $M \in \{1, 2, \dots\}$, 
		\begin{equation}\label{eq:thm1-2}
		\, \E \left|  f(\hat \tc^{(M)})-\E\left(f(\hat \tc^{(N)})\,  | \, \hat \tc^{(M)}\right)\right|\leq p_M
		\end{equation}
		for all $N$ and $M$ with $N\geq M$,
		\item there is a sequence of positive integers $(M_n)_{n\geq 1}$ such that for large enough $n$,
		\begin{equation}\label{eq:thm1-3}
		\E \left|f(\tc_n)-f\left(\tc_n ^{(M_n)}\right)\right|\leq p_{M_n}.
		\end{equation}
	\end{itemize}
	If $a_n:=n^{-1/2}(n^{\max\{\alpha, 1\}}p_{M_n}+M_n^2)$ satisfies
	\begin{equation}\label{eq:thm1-4}
	\lim_{n\to\infty} a_n= 0, \, \text{ and } \, \sum_{n=1}^{\infty}\frac{a_n}{n}<\infty, 
	\end{equation}
	then
	\begin{equation}\label{eq:normality}
	\frac{F(\tc_n) - n\mu}{\sqrt n} \dto \nc(0,\gamma^2)
	\end{equation}
	where $\mu=\E f(\tc)$, and $0 \le \gamma < \infty$.
\end{theorem}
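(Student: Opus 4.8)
The plan is to reduce Theorem~\ref{thm:normality} to the central limit theorem of \cite{J16} for \emph{local} additive functionals by truncating the toll function, and then to absorb the truncation error with the help of the hypotheses \eqref{eq:thm1-2} and \eqref{eq:thm1-3}. For $M\ge1$ set $g_M(T):=f(T^{(M)})$. Since $T^{(M)}$ has the same root degree as $T$ for $M\ge1$, the growth bound \eqref{eq:thm1-1} is inherited, $g_M(T)=O(\deg(T)^{\alpha})$, and $g_M$ depends only on $T^{(M)}$, so it is local with cut-off $M$. Let $G_M$ be the additive functional attached to $g_M$ through \eqref{eq:additive}, so that $F-G_M$ is the additive functional with toll $f-g_M$, and consequently $F(\tc_n)-G_M(\tc_n)=\sum_{v}\bigl(f(\tc_n[v])-f((\tc_n[v])^{(M)})\bigr)$, where $\tc_n[v]$ denotes the fringe subtree rooted at $v$. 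The endgame is to run this with $M=M_n$ as in hypothesis \eqref{eq:thm1-3}.

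First I would record a \emph{quantitative} version of the local central limit theorem of \cite{J16} for the toll $g_M$. For a local toll of order $\deg(T)^{\alpha}$ the moment assumption $\E\xi^{2\alpha+1}<\infty$ is precisely what that theorem requires; the additional input needed here is explicit control of the dependence on the cut-off $M$. Concretely, one wants estimates of the form $\E G_M(\tc_n)=n\mu_M+O(M^2)$ and $\Var G_M(\tc_n)=n\gamma_M^2+o(n)$, together with a bound, in the spirit of the Lévy continuity theorem, asserting that the characteristic function of $(G_M(\tc_n)-n\mu_M)/\sqrt n$ differs from $\exp(-\gamma_M^2t^2/2)$ by at most (for fixed $t$) a polynomial in $M$ times a negative power of $n$. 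Here $\mu_M=\E g_M(\tc)=\E f(\tc^{(M)})$ and $\gamma_M^2$ is the variance constant produced by the formula of \cite{J16} applied to $g_M$. Making the cut-off dependence of \cite{J16} explicit in this way is the main obstacle; it is also what forces the two requirements on $a_n$ in \eqref{eq:thm1-4}, the term $M_n^2/\sqrt n$ coming from the mean correction and the summability serving as a technical uniformity condition needed along the way.

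Next I would bound the truncation error in $L^1$. In the sum above, the summand vanishes for every $v$ whose fringe subtree has height at most $M$. The remaining contributions are split according to the size of $\tc_n[v]$: for nodes whose fringe subtree is of size comparable to $n$ one appeals to \eqref{eq:thm1-3}, which is postulated for every conditioned Galton--Watson tree with the same sequence $(p_M)$; for nodes whose fringe subtree is small but of height exceeding $M$ one uses \eqref{eq:thm1-2}, together with the fact that such fringe subtrees, conditioned on their height, are governed in their top $M$ levels by truncations of the size-biased tree $\htc$ via the identity $\Prob(\htc^{(M)}=T)=w_M(T)\Prob(\tc^{(M)}=T)$ (the bound in \eqref{eq:thm1-2} being uniform in $N$, one may let $N\to\infty$). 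Summing the contributions over all fringe subtree sizes yields $\E\bigl|F(\tc_n)-G_{M_n}(\tc_n)\bigr|\ll n^{\max\{\alpha,1\}}p_{M_n}$; the power $n^{\max\{\alpha,1\}}$ is essentially the price for the toll being unbounded of order $\deg^{\alpha}$, and for $\alpha\le1$ it degenerates to the crude count $n\cdot p_{M_n}$. By \eqref{eq:thm1-4} this is $o(\sqrt n)$, so $\bigl(F(\tc_n)-G_{M_n}(\tc_n)\bigr)/\sqrt n\to0$ in probability.

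Finally I would pass to the limit in the parameters and assemble. Since $\tc$ is almost surely finite, $g_M(\tc)\to f(\tc)$ pointwise, while $|g_M(\tc)|\ll\deg(\tc)^{\alpha}$ with $\E\deg(\tc)^{\alpha}=\E\xi^{\alpha}<\infty$; hence $\mu_M\to\mu=\E f(\tc)$ by dominated convergence, and the size-biased comparison of the previous paragraph upgrades this to a quantitative bound ensuring $n|\mu_{M_n}-\mu|=o(\sqrt n)$ by \eqref{eq:thm1-4}. A parallel argument --- somewhat more delicate, using the moment hypothesis to secure the requisite uniform integrability --- shows $\gamma_M^2\to\gamma^2$ for some $0\le\gamma<\infty$. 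Because $M_n\to\infty$ and $a_n\to0$, the quantitative local central limit theorem now applies along the diagonal $M=M_n$ and gives $(G_{M_n}(\tc_n)-n\mu_{M_n})/\sqrt n\dto\nc(0,\gamma^2)$ (which, when $\gamma=0$, is convergence to the constant $0$). Writing
\[
\frac{F(\tc_n)-n\mu}{\sqrt n}=\frac{F(\tc_n)-G_{M_n}(\tc_n)}{\sqrt n}+\frac{G_{M_n}(\tc_n)-n\mu_{M_n}}{\sqrt n}+\frac{n(\mu_{M_n}-\mu)}{\sqrt n},
\]
the first and third summands tend to $0$ in probability, so Slutsky's theorem yields \eqref{eq:normality}.
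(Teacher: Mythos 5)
There is a genuine gap at the core of your argument. You truncate the toll by depth, $g_M(T)=f(T^{(M)})$, and then want to apply the local central limit theorem of \cite{J16} along the diagonal $M=M_n\to\infty$. That theorem is stated for a \emph{fixed} local functional; what you would need is a version with explicit, uniform control of the error in terms of the cut-off $M$ as it grows with $n$ (for the mean, the variance, and the distributional convergence itself). You correctly identify this as ``the main obstacle'' but then simply posit the required quantitative estimates rather than proving them --- and proving them is essentially the entire difficulty of the theorem. The paper's proof is organised precisely to avoid this: it truncates by tree \emph{size}, $f^{(N)}(T)=f(T)\,\I_{\{|T|<N\}}$, so that for each \emph{fixed} $N$ the toll has finite support and Janson's theorem applies verbatim, giving $W_{N,n}\dto\nc(0,\gamma_N^2)$; the passage $N\to\infty$ is then handled by the double-limit Lemma~\ref{lem:0}, whose hypothesis $\Var(X_n-W_{N,n})=O(\sigma_N^2)$ uniformly in $n$ is supplied by the variance estimate of Lemma~\ref{lem:variance} applied to the toll $f-f^{(N)}$ (supported on $\mathcal{S}=[N,\infty)$). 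The hypotheses \eqref{eq:thm1-2} and \eqref{eq:thm1-3} enter there through Lemma~\ref{lem:Aux2}, i.e.\ through a \emph{covariance} bound $\E(f^{(0)}_{\mathcal S}F^{(0)}_{\mathcal S})\ll n^{\max\{\alpha,1\}}p_{M_n}+\cdots$ exploiting the orthogonality $\E(S_2\mid\tc_n^{(M)})=0$ of the deep part of the functional --- not through an $L^1$ bound on a truncation error.

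Two further steps of your sketch do not follow from the stated hypotheses. First, the claimed bound $\E|F(\tc_n)-G_{M_n}(\tc_n)|\ll n^{\max\{\alpha,1\}}p_{M_n}$: hypothesis \eqref{eq:thm1-3} controls $f(\tc_k)-f(\tc_k^{(M_k)})$ with the cut-off $M_k$ adapted to the size $k$, and \eqref{eq:thm1-2} controls $f(\htc^{(M)})-\E(f(\htc^{(N)})\mid\htc^{(M)})$ for the size-biased tree; neither gives a uniform bound on $\E|f(\tc_{n,v})-f(\tc_{n,v}^{(M_n)})|$ over all fringe subtrees with the single cut-off $M_n$, and summing the available bounds over fringe-subtree sizes $k$ (weighted by $\approx n k^{-3/2}$) does not obviously produce $n^{\max\{\alpha,1\}}p_{M_n}$. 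Second, the assertion $\sqrt{n}\,|\mu_{M_n}-\mu|\to 0$ with $\mu_M=\E f(\tc^{(M)})$: the hypotheses bound differences against \emph{conditional expectations} on the size-biased tree, not $\E f(\tc^{(M)})-\E f(\tc)$ directly. The paper sidesteps this by proving $\E F(\tc_n)=n\mu+o(\sqrt n)$ via Proposition~\ref{prop:mean}, which applies Janson's mean asymptotics to the shifted toll $f-\E f(\htc)$ together with Lemma~\ref{lem:expectation}. In summary, the skeleton (truncate, apply the local CLT, control the error) is the right instinct, but the specific truncation you chose forces exactly the uniform-in-$M$ estimates that are missing, so the proposal does not constitute a proof.
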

In \eqref{eq:normality}, the numerator $F(\tc_n) - n\mu$ can be replace by $F(\tc_n) - \E F(\tc_n)$ as we will see in Proposition~\ref{prop:mean} that 
\begin{equation}\label{eq:meanF}
\E F(\tc_n) = n \mu +o(\sqrt{n}), \, \text{ as } n\to \infty.
\end{equation}
the existence of the expectation of $\mu=\E f(\tc)$ is guaranteed by \eqref{eq:thm1-1}.

\begin{remark}\label{rem:1}
	The proof of Theorem~\ref{thm:normality} is a generalisation of Janson's proof of his theorem for bounded and local functionals in \cite{J16}. The boundedness condition is now replaced by \eqref{eq:thm1-1} assuming finiteness of higher moments of the offspring distribution $\xi$. However, the main difficulty to overcome is the fact that our toll function is no longer local. To give a simple example, an essential part of the proof is to give a meaning to the ``expectation" $\E f(\htc)$. The functional $f$ does not need to be defined on infinite trees. When $f$ is local with a cut-off $M$, then  $f(\htc)=f(\htc\po)$ by definition. So, $\E f(\htc)$  is simply defined to be $\E f(\htc\po)$. In our case, where $f$ is not necessarily local, we can define 
	\begin{equation}\label{eq:rmk1}
	\E f(\htc):=\lim_{M\to\infty} \E f(\htc^{(M)}),
	\end{equation}
	which may not exist in general. However, if $f$ satisfies \eqref{eq:thm1-2}, then we can show that $\E f(\htc)$ exists. Indeed, 
	\begin{align*}
		|\E  f(\htc^{(M)}) -\E f(\htc^{(N)}) | & = \left|\E \left( f(\hat \tc^{(M)})-\E\left(f(\hat \tc^{(N)})\,  | \, \hat \tc^{(M)}\right)\right)\right|\\
		& \leq \E \left|  f(\hat \tc^{(M)})-\E\left(f(\hat \tc^{(N)})\,  | \, \hat \tc^{(M)}\right)\right| \leq p_M,
	\end{align*}
	which tends to zero as $M\to\infty$, uniformly for $N\geq M$. In other words,  $(\E f(\hat \tc^{(M)}))_{M\geq 1}$ is a Cauchy sequence, so the limit \eqref{eq:rmk1} exists.
\end{remark}

\section{Auxiliary results}
In this section, we give some useful results that we will need in the proof of our main theorem.   Throughout the rest of the paper, the offspring distribution $\xi$ is assumed to satisfy  $\E \xi=1$,  $0<\sigma^2:=\mathrm{Var} \xi<\infty$, and $\E \xi^{2\alpha+1}<\infty$ for some fixed integer $\alpha\geq 0$ as in Theorem~\ref{thm:normality}. The distribution of the number of nodes at level $k$, $w_k$, for the three random trees $\tc$, $\htc$, and $\tc_n$ will play an important role in our proof.  This parameter has been studied in \cite{J06}, and in particular, the following results were proved there, see \cite[Theorem 1.13, Lemma 2.2, and Lemma 2.3]{J06} (note that $\tc_\infty$ is used there for $\htc$).
\begin{lemma}\label{lem:Aux1}
 For every positive integer  $r\leq \max \{2\alpha, \, 1\}$, we have
\begin{equation}\label{eq:prem1}
\E\left(w_k(\tc)^{r}\right) =O (k^{r-1}), \,  \, \E(w_k(\htc)^{r}) = O(k^{r}), \text{ and }\,  \E \left(w_k(\tc_n)^r\right) =O (k^r),
\end{equation}
where the constants in the $O$-terms depend on the offspring distribution $\xi$ only.
\end{lemma}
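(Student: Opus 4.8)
These three estimates are the restatements of \cite[Theorem~1.13, Lemma~2.2, and Lemma~2.3]{J06} referred to above, so the plan is to deduce them from \cite{J06} while recording the underlying ideas. For $\tc$ I would argue directly: $(w_k(\tc))_{k\ge0}$ is a critical Galton--Watson process, $w_{k+1}(\tc)=\sum_{i=1}^{w_k(\tc)}\xi_i$ with the $\xi_i$ i.i.d.\ copies of $\xi$ independent of $w_k(\tc)$. Writing $S_z$ for a sum of $z$ independent copies of $\xi$ and using that cumulants add over independent sums, the moment--cumulant formula gives
\[
\E S_z^{\,r}=\sum_{\pi}z^{|\pi|}\prod_{B\in\pi}\kappa_{|B|}(\xi)=z^{r}+\sum_{j=1}^{r-1}c_{r,j}\,z^{j},
\]
the sum being over all set partitions $\pi$ of $\{1,\dots,r\}$; the leading term $z^{r}$ comes from the all-singleton partition together with $\E\xi=1$, and the coefficients $c_{r,j}$ depend only on $\kappa_1(\xi),\dots,\kappa_r(\xi)$, which exist because $\E\xi^{r}<\infty$ whenever $r\le 2\alpha+1$. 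Taking expectations gives the recursion $\E w_{k+1}(\tc)^{r}=\E w_k(\tc)^{r}+\sum_{j<r}c_{r,j}\E w_k(\tc)^{j}$, and one then argues by induction on $r$: for $r=1$ this reads $\E w_k(\tc)=1$, and if $\E w_k(\tc)^{j}=O(k^{j-1})$ for all $j<r$ then the increments $\E w_{k+1}(\tc)^{r}-\E w_k(\tc)^{r}$ are $O(k^{r-2})$, so summing over $k$ yields $\E w_k(\tc)^{r}=O(k^{r-1})$ (valid for every exponent with a finite moment of $\xi$).

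For $\htc$ the plan is to reduce to the case just treated by size-biasing. Kesten's relation $\Prob(\htc^{(M)}=T)=w_M(T)\Prob(\tc^{(M)}=T)$, applied to any functional of the first $k$ levels, gives $\E w_k(\htc)^{r}=\E\big(w_k(\tc)^{r}w_M(\tc)\big)$ for every $M\ge k$; conditioning on $w_k(\tc)$, and noting that below level $k$ one finds a forest of $w_k(\tc)$ independent copies of $\tc$, each with level sizes of mean $1$, yields $\E(w_M(\tc)\mid w_k(\tc))=w_k(\tc)$ and hence $\E w_k(\htc)^{r}=\E w_k(\tc)^{r+1}$. Inserting the bound for $\tc$ with exponent $r+1$ --- permissible because $r+1\le 2\alpha+1$ when $1\le r\le 2\alpha$, and $r+1=2$ with $\E\xi^{2}=1+\sigma^{2}<\infty$ when $r=1$ --- gives $\E w_k(\htc)^{r}=O(k^{r})$.

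The estimate for $\tc_n$ is the one I expect to be the genuine obstacle, and for it I would follow \cite[Lemma~2.3]{J06}. The idea is to compare $\tc_n$ with $\htc$ on the first $k$ levels: from $\Prob(\tc_n=T)=\Prob(\tc=T)/\Prob(|\tc|=n)$ and the Dwass--Otter cyclic lemma, applied to the probability that the forest hanging below level $k$ of $\tc$ has the required total size, one gets, for a tree $T$ with $|T|$ vertices in total and $m:=w_k(T)\ge1$ of them at level $k$,
\[
\frac{\Prob(\tc_n^{(k)}=T)}{\Prob(\htc^{(k)}=T)}=\frac{n}{\,n-|T|+m\,}\cdot\frac{\Prob(\xi_1+\dots+\xi_{n-|T|+m}=n-|T|)}{\Prob(\xi_1+\dots+\xi_{n}=n-1)},
\]
and the local central limit theorem for i.i.d.\ sums of finite variance bounds the right-hand side by $O\big((n/(n-|T|+m))^{3/2}\big)$, in particular by a constant while the first $k$ levels of $\tc_n$ occupy at most a fixed fraction of all $n$ vertices. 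On that event the law of $\tc_n^{(k)}$ is dominated by a bounded multiple of that of $\htc^{(k)}$, contributing $O(\E w_k(\htc)^{r})=O(k^{r})$ to $\E w_k(\tc_n)^{r}$. Controlling the complementary event, where the first $k$ levels absorb a constant fraction of the tree (which can only happen for $k$ of order $\sqrt n$, and is then handled together with tail bounds for $|\tc_n^{(k)}|$ and for the width $\max_j w_j(\tc_n)$), is the delicate part; since making everything uniform in $k$ --- including the local-limit estimates in the lattice case --- is precisely the content of \cite[Lemma~2.3]{J06} (and could alternatively be obtained by singularity analysis of the bivariate generating function marking level-$k$ vertices), the cleanest course is to cite it.
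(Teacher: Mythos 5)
Your proposal is correct and matches the paper, which gives no proof of this lemma at all but simply cites \cite[Theorem 1.13, Lemma 2.2, Lemma 2.3]{J06} for the three estimates. Your additional sketches are sound: the cumulant recursion for $\E w_k(\tc)^r$, the size-biasing identity $\E w_k(\htc)^r = \E w_k(\tc)^{r+1}$ via the martingale property, and the Dwass--Otter comparison of $\tc_n^{(k)}$ with $\htc^{(k)}$ are exactly the arguments underlying the cited results, and you correctly identify the uniformity in $k$ for $\tc_n$ as the one point that genuinely requires the citation.
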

For a rooted tree $T$, we know that $|T\po|=\sum_{k=0}^{M}w_k(T)$. Hence, we can immediately deduce from this lemma, with $r=1$,  that 
\begin{equation}\label{eq:prem2}
\E |\tc\po|=O(M),\, \E |\htc\po|  =O (M^2), \, \text{ and } \E |\tc_n\po|  =O (M^2).
\end{equation}
In fact, it can be shown that $\E |\tc\po|=M+1$. 
We are also going to make extensive use of the higher moments of the root degree. By definition, the distribution of $\deg(\tc)$ is $\xi$, so we know the higher moments of $\deg(\tc)$. On the other hand, note that $\deg(T)=w_1(T)$. So, as  particular cases of the estimates in \eqref{eq:prem1}, we have 
\begin{equation}\label{eq:prem3}
\E (\deg(\htc)^r) <\infty \, \text{ and }  \E \left(\deg(\tc_n)^r\right) =O(1),
\end{equation}
for every positive integer $r\leq \max \{2\alpha, \, 1\}$, where the implied constant in the second estimate is independent of $n$. 

Its well known that $\tc_n$ converges locally to the infinite random tree $\htc$ in the sense that 
\[
\prob{\tc_n\po=T}\to  \prob{\htc\po=T}, \, \text{ as }\, n\to\infty,
\] 
for any fixed integer $M\geq 0$ and a fixed tree $T$. Janson obtained a bound on the rate of convergence of this estimate, in the proof of  \cite[Lemma 5.9]{J16} (see (5.42) there). His result can be formulated as follows: For any tree $T$ with $|T| \le n/2$, and any $M\geq 0$ we have
	\begin{equation}\label{eq:contiguity}
	\prob{\tc_n\po = T} = \prob{\htc\po = T} \left( 1 + O \left( \frac{|T|}{n^{1/2}} \right) \right),
	\end{equation}
	where the constant in the $O$ notation is independent of $T$ and $M.$  Here $T$ and $M$ may depend on $n$, and this is crucial for our purposes. As a consequence of this result we can bound the difference  between the two expectations $\E f(\tc_n\po)$ and $\E f(\htc\po)$ explicitly in terms of $M$, where $M$ is also allowed to depend on $M.$     

\begin{lemma}\label{lem:aux_exp}
	If $f$ satisfies \eqref{eq:thm1-1}, then we have
	\begin{equation}\label{eq:Mean-Last}
	|\E f(\tc_n\po) - \E f(\htc\po)| = O\left(n^{-1/2} M^2 \E (\deg(\htc)^{\alpha+1})+n^{-1}M^2\E (\deg(\tc_n)^{\alpha+1})\right),
	\end{equation}
	where the constant in the $O$ notation is independent of $n$ and $M.$ 
\end{lemma}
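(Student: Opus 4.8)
The plan is to write the difference of expectations as a weighted sum over finite ordered trees and split it according to the number of vertices. Let $\mathcal{A}_M$ denote the countable set of finite rooted ordered trees of depth at most $M$; then
\begin{equation*}
\E f(\tc_n\po) - \E f(\htc\po) = \sum_{T \in \mathcal{A}_M} f(T)\bigl(\prob{\tc_n\po = T} - \prob{\htc\po = T}\bigr),
\end{equation*}
and I would estimate separately the contribution of the ``small'' trees with $|T| \le n/2$ and that of the ``large'' trees with $|T| > n/2$. For the small trees, apply the contiguity estimate \eqref{eq:contiguity} to replace $\prob{\tc_n\po = T} - \prob{\htc\po = T}$ by $\prob{\htc\po = T}\cdot O(|T| n^{-1/2})$, with a constant uniform in $T$ and $M$; summing and extending the sum back to all of $\mathcal{A}_M$ (all terms being nonnegative) bounds this part by $O\bigl(n^{-1/2}\,\E(|f(\htc\po)|\cdot|\htc\po|)\bigr)$. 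For the large trees, bound the contribution crudely by $\E\bigl(|f(\tc_n\po)|\,\I\{|\tc_n\po| > n/2\}\bigr) + \E\bigl(|f(\htc\po)|\,\I\{|\htc\po| > n/2\}\bigr)$ and use Markov's inequality in the form $\I\{X > n/2\} \le 2X/n$, which bounds it by $O\bigl(n^{-1}\,\E(|f(\tc_n\po)|\cdot|\tc_n\po|)\bigr) + O\bigl(n^{-1}\,\E(|f(\htc\po)|\cdot|\htc\po|)\bigr)$.

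It then remains to bound $\E(|f(\mathcal{U}\po)|\cdot|\mathcal{U}\po|)$ for $\mathcal{U} \in \{\tc_n,\htc\}$. Assuming $M \ge 1$ (the case $M = 0$ being trivial, since $\tc_n^{(0)}$ and $\htc^{(0)}$ are both the one-vertex tree), the root degree is unchanged by truncation, $\deg(\mathcal{U}\po) = \deg(\mathcal{U})$, so \eqref{eq:thm1-1} gives $|f(\mathcal{U}\po)| = O(\deg(\mathcal{U})^\alpha)$. Since $|\mathcal{U}\po| = \sum_{k=0}^M w_k(\mathcal{U})$, it suffices to bound $\E(\deg(\mathcal{U})^\alpha w_k(\mathcal{U}))$ for each $k$: for $\alpha = 0$ this is just $\E w_k(\mathcal{U}) = O(k)$, while for $\alpha \ge 1$ the Cauchy--Schwarz inequality gives $\E(\deg(\mathcal{U})^\alpha w_k(\mathcal{U})) \le (\E\deg(\mathcal{U})^{2\alpha})^{1/2}(\E w_k(\mathcal{U})^2)^{1/2}$, and since $2\alpha \le \max\{2\alpha,1\}$ and $2 \le \max\{2\alpha,1\}$, Lemma~\ref{lem:Aux1} and \eqref{eq:prem3} yield $O(1)\cdot O(k) = O(k)$ again. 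Summing over $k = 0,\dots,M$ gives $\E(|f(\mathcal{U}\po)|\cdot|\mathcal{U}\po|) = O(M^2)$. (Alternatively, for $\mathcal{U} = \htc$ one can condition on the root degree $D$ and use Kesten's decomposition of $\htc\po$ into one size-biased branch of depth $M-1$ and $D-1$ independent copies of $\tc$ of depth $M-1$, which together with \eqref{eq:prem2} gives $\E(|\htc\po| \mid D) = O(DM^2)$ directly.)

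Putting the estimates together, the left-hand side of \eqref{eq:Mean-Last} is $O(n^{-1/2}M^2) + O(n^{-1}M^2)$. Since $\E\deg(\htc)^{\alpha+1}$ and $\E\deg(\tc_n)^{\alpha+1}$ are finite by \eqref{eq:prem3} (note $\alpha+1 \le \max\{2\alpha,1\}$) and bounded below by positive absolute constants -- e.g.\ by Jensen's inequality, using $\E\deg(\htc) \ge 1$ and $\E\deg(\tc_n) = (n-1)/n \ge 1/2$ -- the bound $O(M^2)$ for the $\htc$-terms may be rewritten as $O(M^2\,\E\deg(\htc)^{\alpha+1})$ and that for the $\tc_n$-term as $O(M^2\,\E\deg(\tc_n)^{\alpha+1})$, which is exactly \eqref{eq:Mean-Last}. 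The point deserving the most care is the uniformity of every $O$-constant in $M$: in the small-tree step it is supplied by \eqref{eq:contiguity}, and in the moment estimates by Lemma~\ref{lem:Aux1} and \eqref{eq:prem3}, whose constants depend only on $\xi$; this is what later permits taking $M = M_n \to \infty$. I expect the genuine obstacle to be the control of $\E(\deg(\mathcal{U})^\alpha w_k(\mathcal{U}))$ -- the place where the moment hypothesis $\E\xi^{2\alpha+1} < \infty$ is actually used -- rather than the splitting and the Markov bound, which are routine.
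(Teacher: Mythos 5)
Your proof is correct and follows the same skeleton as the paper's: the decomposition of the difference as a sum over trees, the split at $|T|=n/2$, the contiguity estimate \eqref{eq:contiguity} for the small trees, and a Markov-type bound for the large ones. The one step you execute differently is the estimate $\E\big(|f(\mathcal{U}\po)|\,|\mathcal{U}\po|\big)\ll M^2$ for $\mathcal{U}\in\{\tc_n,\htc\}$: the paper conditions on the root degree and uses the branch decomposition of $\htc$ (a copy of $\htc$ plus $\deg(\htc)-1$ copies of $\tc$), respectively of $\tc_n$, together with \eqref{eq:prem2}, to get $\E\big(|\mathcal{U}\po|\,\big|\,\deg(\mathcal{U})\big)\ll M^2\deg(\mathcal{U})$, which directly produces the factors $\E(\deg(\cdot)^{\alpha+1})$ in the statement; you instead apply Cauchy--Schwarz level by level, which requires $\E\big(w_k(\mathcal{U})^2\big)=O(k^2)$ from Lemma~\ref{lem:Aux1} (hence your case split $\alpha=0$ versus $\alpha\ge 1$) and the $2\alpha$-th moment of the root degree, and then you reinstate the moment factors via a lower bound on them. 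Both routes stay within the hypotheses guaranteed by $\E\xi^{2\alpha+1}<\infty$, and your version makes it slightly more visible where that hypothesis is consumed. Two minor points: your identity $\E\deg(\tc_n)=(n-1)/n$ is the outdegree averaged over all nodes, not the root degree (which tends to $\E\xi^2=1+\sigma^2$); the lower bound you actually need follows simply from $\deg(\tc_n)\ge 1$ a.s.\ for $n\ge 2$. And reinstating the moment factors, while harmless, is not really needed, since wherever the lemma is applied those moments are immediately bounded by constants via \eqref{eq:prem3}.
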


\begin{proof}
We have 
	\begin{align*}\label{eq:truncatedExps}
		|\E f(\tc_n\po) -  \E  f(\htc  \po)|  
		 = &   \left|\sum_T f(T)\prob{\tc_n\po = T} - \sum_T f(T) \prob{\htc\po = T}\right|\\
		  \le & \sum_{|T| \le n/2}|f(T)|\left|\prob{\tc_n\po = T} - \prob{\htc\po = T}\right| + \\
		&\sum_{|T| > n/2} |f(T)|\left( \prob{\tc_n\po = T} + \prob{\htc\po = T} \right) \\
	\end{align*}
	
We begin by estimating the sum over $|T|\le n/2$. Using \eqref{eq:contiguity} and the bound \eqref{eq:thm1-1} on $f(T)$, we obtain 
\[
\sum_{|T| \le n/2}|f(T)|\left|\prob{\tc_n\po = T} - \prob{\htc\po = T}\right|\ll  \sum_T  \prob{\htc\po = T}\frac{\deg(T)^{\alpha}|T|}{n^{1/2}}.
\] 	
Now, the right-hand side can be bounded as follows: 
	\begin{align*}
		\sum_T  \prob{\htc\po = T}\frac{\deg(T)^{\alpha}|T|}{n^{1/2}}  
		& = n^{-1/2}\E (\deg(\htc\po)^{\alpha}|\htc\po|) \\
		& = n^{-1/2}\E  \left(\deg(\htc\po)^{\alpha}\, \E \Big(|\htc\po| \, \Big| \, \deg(\htc\po)\Big)\right).
	\end{align*}
	Conditioning on  $\deg(\htc\po)$ (which is the same as $\deg(\htc)$ for $M\geq1$),  $\htc$ consists of a root, a copy of $\htc$ and  $\deg(\htc)-1$ independent copies of $\tc$. Thus, by the estimates in \eqref{eq:prem2}, we have
	\begin{equation}\label{eq:cond-m}
	\E \Big(|\htc\po| \, \Big| \, \deg(\htc\po)\Big) \ll M^2 + M \deg(\htc\po) \ll M^2\deg(\htc\po).	
	\end{equation}
	Therefore,
	$$
	\E  \left(\deg(\htc\po)^{\alpha}\, \E \Big(|\htc\po| \, \Big| \, \deg(\htc\po)\Big)\right) \ll M^2 \E (\deg(\htc\po)^{\alpha+1}),
	$$
	which yields
	\begin{equation}\label{eq:part1}
	\sum_T  \prob{\htc\po = T}\frac{\deg(T)^{\alpha}\, |T|}{n^{1/2}} \ll n^{-1/2}M^2\E (\deg(\htc\po)^{\alpha+1}).
	\end{equation}
	
	Next we estimate the sum over $|T|> n/2$, which we split further as follows:
\begin{align*}
\sum_{|T| > n/2} |f(T)|& \left( \prob{\tc_n\po = T} + \prob{\htc\po = T} \right) \\
& \leq \sum_{|T| > n/2} \prob{\htc\po = T} \deg(T)^{\alpha}+\sum_{|T| > n/2}  \prob{\tc_n\po = T} \deg(T)^{\alpha}.
\end{align*}
We have	
\begin{align*}
		\sum_{|T| > n/2}& \prob{\htc\po = T}   \deg(T)^{\alpha} \\ 
		& = \sum_{k\geq 1} k^{\alpha}\, \prob{|\htc\po |> n/2 \, \text{ and } \, \deg(\htc)=k}\\
		& = \sum_{k\geq 1} k^{\alpha}\, \prob{\deg(\htc)=k} \prob{|\htc\po |> n/2 \, \Big| \, \deg(\htc)=k}.
	\end{align*}
	Markov's inequality yields
	$$
	\prob{|\htc\po |> n/2 \, \Big| \, \deg(\htc)=k} \leq \frac{2\E (|\htc\po | \,| \, \deg(\htc)=k )}{n} \ll \frac{kM^2}{n},
	$$
	where the last estimate follows from \eqref{eq:cond-m}. Thus, 
	\begin{equation}\label{eq:part2}
	\sum_{|T| > n/2} \prob{\htc\po = T}  \deg(T)^{\alpha}\, \ll n^{-1}M^2 \E (\deg(\htc)^{\alpha+1}).
	\end{equation}
	Finally, for the last term, we proceed in a similar fashion:
	\begin{align*}
		\sum_{|T| > n/2}&  \prob{\tc_n\po = T}  \deg(T)^{\alpha}\, \\ 
		& \leq \sum_{k\geq 1} k^{\alpha}\, \prob{|\tc_n\po |> n/2 \, \text{ and } \, \deg(\tc_n)=k}\\
		& \leq \sum_{k\geq 1} k^{\alpha}\, \prob{\deg(\tc_n)=k} \prob{|\tc_n\po |> n/2 \, \Big| \, \deg(\tc_n)=k}.
	\end{align*}
	If $\tc_{n,1}, \tc_{n,2}, \dots, \tc_{n,k}$ are the branches of $\tc_n$, given that $\deg(\tc_n)=k$, then, conditioning on their sizes $n_1, n_2, \dots, n_k$, they are $k$ independent conditioned Galton-Watson trees $\tc_{n_1}, \tc_{n_2}, \dots, \tc_{n_k}$. On the other hand, we have 
	$$
	|\tc_n\po|=1+\sum_{i=1}^{k}|\tc_{n,i}^{(M-1)}|.
	$$
	Thus,
	\begin{align*}
		\E\left(|\tc_n^{(M)}| \, | \, \deg(\tc_n)=k\right)
		& =\E \left(\E\left(|\tc_n^{(M)}| \, \Big| \, n_1,  n_2, \cdots, n_k\right)\right)\\
		& =1+ \sum_{i=1}^{k} \E \left(\E\left(|\tc_{n_i}^{(M-1)}| \, \Big| \, n_1,  n_2, \cdots, n_k\right)\right)   \ll kM^2,
	\end{align*}
	which again follows from the last estimate in \eqref{eq:prem2}. Now, Markov's inequality yields
	$$
	\prob{|\tc_n\po |> n/2 \, \Big| \, \deg(\tc_n)=k} \ll n^{-1}kM^2.
	$$
	Therefore, making use of \eqref{eq:prem3} once again, we have
	\begin{multline}\label{eq:part3}
		\sum_{|T| > n/2} \prob{\tc_n\po = T}  \deg(T)^{\alpha}\,\\ \ll  n^{-1}M^2\sum_{k\geq 1}k^{\alpha+1}\,\prob{\deg(\tc_n)=k} 
		=n^{-1}M^2\E (\deg(\tc_n)^{\alpha+1}).
	\end{multline}
	Combining the estimates \eqref{eq:part1},  \eqref{eq:part2}, and \eqref{eq:part3}, we finally arrive at the estimate 
	\begin{equation}\label{eq:middle}
	|\E f(\tc_n\po) - \E f(\htc\po)| \ll n^{-1/2}M^2\E (\deg(\htc)^{\alpha+1})+n^{-1}M^2\E (\deg(\tc_n)^{\alpha+1}),
	\end{equation}
	which complete the proof of the lemma.
\end{proof}

The following lemma will be useful in the estimate of the variance in the next section. First, we start with some operators on functionals. For any  toll function $f$  of an additive functional $F$, we denote by $f^{(0)}$ the centred toll function which is defined by
\[
f^{(0)}(T):=f(T)-\E f(\tc_{|T|}),
\]
and let $F^{(0)}$ be the additive functional associated with $f^{(0)}.$ Furthermore,
for any  a subset $\mathcal{S}$ of $\mathbb{N}$,  let $f_{\mathcal{S}}$ be the functional defined by 
\[
f_{\mathcal{S}}(T)=
\begin{cases}
f(T) \, & \text{ if } \, |T|\in \mathcal{S},\\
0 \, & \text{ otherwise,}
\end{cases}
\]
and we denote by $F_{\mathcal{S}}$ the additive functional whose toll function is $f_{\mathcal{S}}.$

%%%%%%%%%%%%%%%%%%%%%%%%%%%%%%%%%%%%%%%%%%%%%%%%%%%%%%%%%%%

%%%%%%%%%%%%%%%%%%%%%%%%%%%%%%%%%%%%%%%%%%%%%%%%%%%%%%%
\begin{lemma}\label{lem:Aux2} Assume that $f$ satisfies the conditions of Theorem~\ref{thm:normality} and let $(p_M)_{M\geq 1}$ and $(M_n)_{n\geq 1}$ be the corresponding sequences. Then for any subset $\mathcal{S}$  of $\mathbb{N},$ we have 
	\begin{equation}\label{eq:fF0}
	\E (f_{\mathcal{S}}^{(0)}(\tc_n)F_{\mathcal{S}}^{(0)}(\tc_n)) \ll n^{\max\{\alpha,\,1\}} \, p_{M_n}+ \E(\deg(\tc_{n})^{2\alpha})+M_n^2\, \E(\deg(\tc_{n})^{\alpha+1}).
	\end{equation}
\end{lemma}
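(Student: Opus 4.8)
My plan is to expand the product $f_{\mathcal S}^{(0)}(\tc_n) F_{\mathcal S}^{(0)}(\tc_n)$ using the additive structure and then control the resulting sum of pairwise correlations over fringe subtrees. Recall that $F_{\mathcal S}^{(0)}(\tc_n) = \sum_{v} f_{\mathcal S}^{(0)}(\tc_n^v)$, where $v$ ranges over the nodes of $\tc_n$ and $\tc_n^v$ is the fringe subtree rooted at $v$. Writing $\rho$ for the root of $\tc_n$, we get
\[
\E\bigl(f_{\mathcal S}^{(0)}(\tc_n) F_{\mathcal S}^{(0)}(\tc_n)\bigr) = \sum_{v} \E\bigl(f_{\mathcal S}^{(0)}(\tc_n)\, f_{\mathcal S}^{(0)}(\tc_n^v)\bigr).
\]
The term $v = \rho$ gives $\E\bigl(f_{\mathcal S}^{(0)}(\tc_n)^2\bigr)$, which by \eqref{eq:thm1-1} is $O(\E(\deg(\tc_n)^{2\alpha}))$ (the centring only helps), accounting for the second term on the right-hand side of \eqref{eq:fF0}. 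For $v \neq \rho$ I would split according to whether $v$ lies within the first $M_n$ levels of $\tc_n$ or deeper, i.e.\ whether $v \in \tc_n^{(M_n)}$ or not.

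For nodes $v$ at depth $> M_n$: here the toll value $f_{\mathcal S}^{(0)}(\tc_n^v)$ is unaffected by the truncation $\tc_n \mapsto \tc_n^{(M_n)}$, while $f_{\mathcal S}^{(0)}(\tc_n)$ can be compared with $f_{\mathcal S}^{(0)}(\tc_n^{(M_n)})$ (extending the $f_{\mathcal S}$ notation appropriately, noting $|\tc_n| = n$ is fixed so the size-restriction is either vacuous or kills the term uniformly). Using \eqref{eq:thm1-3} and a Cauchy--Schwarz / triangle-inequality argument together with the moment bound \eqref{eq:thm1-1}, the total contribution of these deep nodes should be $O\bigl(n^{\max\{\alpha,1\}} p_{M_n}\bigr)$; the factor $n^{\max\{\alpha,1\}}$ arises because there are $O(n)$ nodes and each toll value is $O(\deg^\alpha)$, whose expectation over $\tc_n$ is controlled via \eqref{eq:prem1}/\eqref{eq:prem3} (one power of $n^{\alpha}$ when $\alpha \ge 1$, or $n^1$ bounding the number of nodes when $\alpha = 0$). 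For nodes $v$ at depth $\le M_n$: these nodes all lie in $\tc_n^{(M_n)}$, of which there are only $|\tc_n^{(M_n)}|$ many with $\E|\tc_n^{(M_n)}| = O(M_n^2)$ by \eqref{eq:prem2}; bounding each correlation by $\E\bigl(|f_{\mathcal S}(\tc_n)|\cdot|f_{\mathcal S}(\tc_n^v)|\bigr) \ll \E\bigl(\deg(\tc_n)^{\alpha}\deg(\tc_n^v)^{\alpha}\bigr)$, conditioning on $|\tc_n^{(M_n)}|$, and using $\deg(\tc_n^v) \le \deg(\tc_n)$-type crude bounds together with \eqref{eq:prem3}, this contributes $O\bigl(M_n^2\, \E(\deg(\tc_n)^{\alpha+1})\bigr)$ — matching the third term. (The shift from $2\alpha$ to $\alpha+1$ here mirrors exactly what happened in Lemma~\ref{lem:aux_exp}: the depth-$\le M_n$ subtrees are "small" so only a bounded degree factor is needed for the second node.)

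The main obstacle I anticipate is the deep-node estimate: making rigorous the claim that replacing $f_{\mathcal S}^{(0)}(\tc_n)$ by its truncated version $f_{\mathcal S}^{(0)}(\tc_n^{(M_n)})$ introduces an error controlled by \eqref{eq:thm1-3} \emph{uniformly after summing over all $O(n)$ deep nodes $v$}, since \eqref{eq:thm1-3} only gives a bound on a single expectation, not on a sum weighted by $f_{\mathcal S}^{(0)}(\tc_n^v)$. I would handle this by pulling out $\sum_{\text{depth}(v) > M_n} f_{\mathcal S}^{(0)}(\tc_n^v) = F_{\mathcal S}^{(0)}(\tc_n) - F_{\mathcal S}^{(0)}(\tc_n^{(M_n)})$ and bounding $\E\bigl(f_{\mathcal S}^{(0)}(\tc_n)\bigl(F_{\mathcal S}^{(0)}(\tc_n) - F_{\mathcal S}^{(0)}(\tc_n^{(M_n)})\bigr)\bigr)$ via Cauchy--Schwarz together with an $L^1$-type bound from \eqref{eq:thm1-3}, interpolated against the crude $L^\infty$-ish bound $|f_{\mathcal S}^{(0)}(\tc_n)| \ll \deg(\tc_n)^\alpha$; the combination of a small $L^1$ distance and a polynomially-bounded integrand is what yields $n^{\max\{\alpha,1\}} p_{M_n}$ rather than something worse. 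Care with the centring (ensuring the $o(\sqrt n)$ mean correction from \eqref{eq:meanF} does not spoil the bound) and with the size-parametrized definition of $f^{(0)}$ will be the remaining technical points.
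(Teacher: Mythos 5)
Your decomposition into the root term, shallow nodes, and deep nodes is essentially the paper's split $F^{(0)}_{\mathcal S}(\tc_n)=S_1+S_2$ with $S_1=\sum_{d(v)<M_n}f^{(0)}_{\mathcal S}(\tc_{n,v})$ and $S_2=\sum_{d(v)\ge M_n}f^{(0)}_{\mathcal S}(\tc_{n,v})$, and your account of where each of the three terms in \eqref{eq:fF0} comes from is correct in outline. The genuine gap is in the deep-node estimate, which is the heart of the lemma. The mechanism that yields $\E(f^{(0)}_{\mathcal S}(\tc_n)S_2)\ll n^{\max\{\alpha,1\}}p_{M_n}$ is an \emph{orthogonality relation}, not an $L^1$/$L^\infty$ interpolation: one writes $S_2=\sum_{d(v)=M_n}F^{(0)}_{\mathcal S}(\tc_{n,v})$ and observes that, conditionally on $\tc_n^{(M_n)}$ and the sizes of the level-$M_n$ fringe subtrees, those subtrees are independent conditioned Galton--Watson trees, while the centring built into $f^{(0)}$ forces $\E F^{(0)}_{\mathcal S}(\tc_m)=0$ for every $m$; hence $\E\left(S_2 \mid \tc_n^{(M_n)}\right)=0$. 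This kills $\E(g(\tc_n^{(M_n)})\,S_2)$ for \emph{any} function $g$ of the truncated tree, in particular for $g=\E(f^{(0)}_{\mathcal S}(\tc_n)\mid\tc_n^{(M_n)})$, and only then does H\"older give
$$|\E(f^{(0)}_{\mathcal S}(\tc_n)S_2)|\le \max|S_2|\cdot\E\left|f^{(0)}_{\mathcal S}(\tc_n)-\E\left(f^{(0)}_{\mathcal S}(\tc_n)\mid\tc_n^{(M_n)}\right)\right|\le n^{\max\{\alpha,1\}}\cdot 2p_{M_n},$$
using the deterministic bound $|S_2|\le\sum_v\deg(\tc_{n,v})^{\alpha}\le n^{\max\{\alpha,1\}}$ together with \eqref{eq:thm1-3}. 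Your sketch never identifies this vanishing conditional expectation, and without it the alternatives you propose do not close: Cauchy--Schwarz produces a factor $\Var(S_2)^{1/2}$, generically of order $\sqrt n$ (essentially the quantity the whole variance analysis is trying to control), which swamps $n^{\max\{\alpha,1\}}p_{M_n}$ when $p_{M_n}$ decays geometrically in $M_n\asymp\log n$ as in all the paper's examples; and replacing $f^{(0)}_{\mathcal S}(\tc_n)$ by a function of $\tc_n^{(M_n)}$ leaves behind exactly the cross term that only the orthogonality removes.

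Two smaller inaccuracies. First, the identity $\sum_{d(v)>M_n}f^{(0)}_{\mathcal S}(\tc_{n,v})=F^{(0)}_{\mathcal S}(\tc_n)-F^{(0)}_{\mathcal S}(\tc_n^{(M_n)})$ is false, because fringe subtrees of the truncated tree are themselves truncated; the correct and useful identity is $S_2=\sum_{d(v)=M_n}F^{(0)}_{\mathcal S}(\tc_{n,v})$. Second, in the shallow-node part the bound $\deg(\tc_{n,v})\le\deg(\tc_n)$ is not true; the paper instead iterates a level-by-level conditioning, using that the fringe subtrees at each level are conditionally conditioned Galton--Watson trees with $O(1)$ degree moments and that $\E w_m(\tc_n)=O(m)$, to reach $\E\big(\sum_{d(v)<M_n}\deg(\tc_{n,v})^{\alpha}\mid\deg(\tc_n)\big)\ll\deg(\tc_n)^{\alpha}+M_n^2\deg(\tc_n)$, from which the terms $\E(\deg(\tc_n)^{2\alpha})+M_n^2\E(\deg(\tc_n)^{\alpha+1})$ follow.
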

\begin{proof} Since the left side of \eqref{eq:fF0} is zero for $n\notin \mathcal{S},$ we may assume without loss of generality that $n\in \mathcal{S}.$
	We decompose $F_{\mathcal{S}}^{(0)}(\tc_n)$ according to the depth $d(v)$ of the nodes:
	\begin{equation}
	F_{\mathcal{S}}^{(0)}(\tc_n) = \sum_{v \in \tc_n} f_{\mathcal{S}}^{(0)}(\tc_{n,v}) = \sum_{d(v) < M} f_{\mathcal{S}}^{(0)}(\tc_{n,v}) + \sum_{d(v) \ge M} f_{\mathcal{S}}^{(0)}(\tc_{n,v}) =: S_1 + S_2,
	\end{equation}
	where $\tc_{n,v}$ denotes the fringe subtree of $\tc_n$ rooted at $v.$
	Notice that $f_{\mathcal{S}}^{(0)}$ might not necessarily satisfy all conditions of Theorem \ref{thm:normality}. However,  \eqref{eq:thm1-1} is satisfied by $f_{\mathcal{S}}^{(0)}$. Hence, we have
	\begin{align*}
		\E |f_{\mathcal{S}}^{(0)}(\tc_n)S_1| & \ll \E \Big(\deg(\tc_n)^{\alpha}\sum_{d(v) < M} \deg(\tc_{n,v})^{\alpha}\Big)\\
		& = \E \Big(\deg(\tc_n)^{\alpha}\,\E\Big(\sum_{d(v) < M} \deg(\tc_{n,v})^{\alpha}\, \Big| \, \deg(\tc_n)\Big) \Big).
	\end{align*}
	Next, for any positive integer $m\leq M$, we have 
	$$
	\E\Big(\sum_{d(v) < m} \deg(\tc_{n,v})^{\alpha}\, \Big| \, \tc_n^{(m-1)}\Big)=\sum_{d(v)<m-1}\deg(\tc_{n,v})^{\alpha}+O\left( w_{m-1}(\tc_n)\right).
	$$	
	This is because the $w_{m-1}(\tc_n)$ fringe subtrees with roots at level $m-1$, conditioned on their sizes, are conditioned Galton-Watson trees and thus by \eqref{eq:prem3} the moments of the root degrees are $O(1)$. Taking the expectation conditioned on $\deg(\tc_n)$, again by the same argument, and by the estimate $\E w_{m-1}(\tc_n)=O(m)$ as in \eqref{eq:prem1}, we have
	$$
	\E\Big(\sum_{d(v) < m} \deg(\tc_{n,v})^{\alpha}\, \Big| \, \deg(\tc_n)\Big)=  \E\Big(\sum_{d(v) < m-1} \deg(\tc_{n,v})^{\alpha}\, \Big| \, \deg(\tc_n)\Big)+O(m\deg(\tc_n)).
	$$
	Thus, iterating from $M$, we obtain 
	$$
	\E\Big(\sum_{d(v) < M} \deg(\tc_{n,v})^{\alpha}\, \Big| \, \deg(\tc_n)\Big)\ll  \deg(\tc_{n})^{\alpha} + M^2\deg(\tc_n).
	$$	
	Therefore,
	\begin{equation}\label{eq:fs1}
	\E |f_{\mathcal{S}}^{(0)}(\tc_n)S_1|  \ll \E(\deg(\tc_{n})^{2\alpha})+M^2\E(\deg(\tc_{n})^{\alpha+1}).
	\end{equation}

	\medskip
	
	Now for the contribution from $S_2$, note first that 
\begin{equation}\label{eq:S2}
S_2=\sum_{d(v)=M}F_{\mathcal{S}}^{(0)}(\tc_{n,v}).
\end{equation}
We condition on $\tc_n\po$ and the sizes of the fringe subtrees $\tc_{n,v_i}$, $i =1, \dots, w_M(\tc_k)$, induced by nodes at level $M$. Conditionally, each $\tc_{n,v_i}$ is distributed as $\tc_{n_i}$,  where $n_i=|\tc_{n,v_i}|$. From the definition of $f_{\mathcal{S}}^{(0)}$, we know that  $\E f_{\mathcal{S}}^{(0)}(\tc_m) = 0$ for every $m\geq 1.$ Noting that 
\[
F_{\mathcal{S}}^{(0)}(\tc_m) =\sum_{k=1}^{m-1} F_{\{k\}}(\tc_m)\I_{\{k\in\mathcal{S} \}},
\]
where $\I_{\{k\in\mathcal{S}\}}$ denotes the indicator function of the set $\mathcal{S}$. The sum is only over $k<m$ since, trivially, $F_{\{k\}}(T)=0$ for $|T|<k.$ It follows (see \cite[(6.25)]{J16}) that $\E F_{\mathcal{S}}^{(0)}(\tc_m) = 0$ for every $m\geq 1$ and therefore, by \eqref{eq:S2} and the law of total expectation, we also have 
	\begin{equation}\label{eq:condS2}
	\E \left(S_2 \, | \, \tc_n\po \right) = 0. 
	\end{equation}
	Let us define $\tilde f_{\mathcal{S},M}(\tc_n):=\E(f_{\mathcal{S}}(\tc_n)\, | \, \tc_n\po)$ and similarly $\tilde f^{(0)}_{\mathcal{S},M}(\tc_n):=\E(f_{\mathcal{S}}^{(0)}(\tc_n)\, | \, \tc_n\po)$.	Then
	$$
	\E (\tilde f^{(0)}_{\mathcal{S},M}(\tc_n) S_2)=\E \Big( \E\left(\tilde f^{(0)}_{\mathcal{S},M}(\tc_n) S_2 \, | \, \tc_n\po\right) \Big) =\E\left( \tilde f^{(0)}_{\mathcal{S},M}(\tc_n)\E\left( S_2 \, | \, \tc_n\po\right)\right)=0.
	$$	
	Hence,	
	$$
	|\E (f_{\mathcal{S}}^{(0)}(\tc_n)S_2)| = |\E(S_2 (f_{\mathcal{S}}^{(0)}(\tc_n)- \tilde f^{(0)}_{\mathcal{S},M}(\tc_n)))|\leq \max |S_2|\, \E |f_{\mathcal{S}}^{(0)}(\tc_n)- \tilde f^{(0)}_{\mathcal{S},M}(\tc_n)|.
	$$
By definition, we know that $f_{\mathcal{S}}(\tc_n)=f_{\mathcal{S}}^{(0)}(\tc_n)+\E f_{\mathcal{S}}(\tc_n)$ and we can also verify that $\tilde f_{\mathcal{S},M}(\tc_n)=\tilde f^{(0)}_{\mathcal{S},M}(\tc_n)+\E f_{\mathcal{S}}(\tc_n).$ Therefore, since we assumed that $n\in \mathcal{S}$, we have  	
\[
\E |f_{\mathcal{S}}^{(0)}(\tc_n)- \tilde f^{(0)}_{\mathcal{S},M}(\tc_n)| = \E |f_{\mathcal{S}}(\tc_n)- \tilde f_{\mathcal{S},M}(\tc_n)| =\E |f(\tc_n)- \E(f(\tc_n)\, | \, \tc_n\po)|.
\]
We can  use  \eqref{eq:thm1-2} and \eqref{eq:thm1-3} to estimate the right-hand side of the above equation. For the rest of the proof, we choose $M=M_n$ (as defined in Theorem \ref{thm:normality}). We have
	\begin{align*}
		|f(\tc_n)- \E(f(\tc_n)\, | \, \tc_n\po)|
		& \leq |f(\tc_n)-f(\tc_n\po)|+|f(\tc_n\po)-\E(f(\tc_n)\, | \, \tc_n\po)|\\
		& = |f(\tc_n)-f(\tc_n\po)|+\left|\E \left(f(\tc_n\po)-f(\tc_n)\, | \, \tc_n\po\right)\right|\\
		& \leq |f(\tc_n)-f(\tc_n\po)|+\E\left(|f(\tc_n\po)-f(\tc_n)|\, |\, \tc_n\po\right).
	\end{align*}
	Taking the expectation again, and using our condition \eqref{eq:thm1-3} (with $M=M_n$), we obtain
	$$
	\E |f(\tc_n)- \E(f(\tc_n)\, | \, \tc_n\po)|\leq 2 p_M.
	$$
 On the other hand, we have 
	$$
	|S_2| \leq \sum_{d(v) \ge M} |f^{(0)}_{\mathcal{S}}(\tc_{n,v})|\ll \sum_{v\in \tc_n} \deg(\tc_{n,v})^{\alpha}. 
	$$
	Since $\alpha$ is a nonnegative integer, the last term is bounded above by $(\sum_{v\in \tc_n} \deg(\tc_{n,v}))^{\alpha}$ (which is equal to $(n-1)^{\alpha}$) except for $\alpha=0$. Hence, we get 
	$$
	\max |S_2| \leq n^{\max\{\alpha, 1\}}.
	$$
	Therefore, putting everything together, we have 
	$$
	\E (f^{(0)}_\mathcal{S}(\tc_n)F^{(0)}_\mathcal{S}(\tc_n)) \ll n^{\max\{\alpha, 1\}}p_M+\E(\deg(\tc_{n})^{2\alpha})+M^2\E(\deg(\tc_{n})^{\alpha+1}),
	$$
	as claimed.
\end{proof}

%%%%%%%%%%%%%%%%%%%%%%%%%%%%%%%%%%%%%%%%

\section{Mean and variance}	
We first look at the expectation $\E f(\tc_n)$. As it is also the case in \cite{J16}, one of the key observations in the proof of Theorem~\ref{thm:normality} is the fact that $\E f(\tc_n)$ is asymptotically equal to $\E f(\htc)$ (which is finite, cf. Remark~\ref{rem:1}) with an explicit bound on the error term. This is made precise in the next lemma.

\begin{lemma}\label{lem:expectation}
	If $f$ satisfies the conditions of Theorem~\ref{thm:normality}, then
	\begin{equation}
	\E f(\tc_n) = \E f(\htc) + O(p_{M_n} + n^{-1/2} \, M_n^2 ).
	\end{equation}
\end{lemma}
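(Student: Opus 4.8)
The plan is to compare $\tc_n$ with $\htc$ by interpolating through the truncations at level $M_n$, using the triangle inequality
$$|\E f(\tc_n) - \E f(\htc)| \le |\E f(\tc_n) - \E f(\tc_n^{(M_n)})| + |\E f(\tc_n^{(M_n)}) - \E f(\htc^{(M_n)})| + |\E f(\htc^{(M_n)}) - \E f(\htc)|,$$
and then estimating the three terms on the right-hand side separately. Here $\E f(\tc_n)$ is a finite sum over trees of order $n$, and $\E f(\htc)$ is well defined by Remark~\ref{rem:1}, so no convergence issue arises.

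For the first term I would use Jensen's inequality together with hypothesis \eqref{eq:thm1-3} with $M = M_n$ (valid for $n$ large enough), which gives $|\E f(\tc_n) - \E f(\tc_n^{(M_n)})| \le \E|f(\tc_n) - f(\tc_n^{(M_n)})| \le p_{M_n}$. The second term is precisely what Lemma~\ref{lem:aux_exp} estimates: applying it with $M = M_n$ yields a bound of order $n^{-1/2} M_n^2\,\E(\deg(\htc)^{\alpha+1}) + n^{-1} M_n^2\,\E(\deg(\tc_n)^{\alpha+1})$. Since $\alpha+1 \le \max\{2\alpha,1\}$ for every integer $\alpha \ge 0$, the estimates in \eqref{eq:prem3} show that both degree moments are finite and, in the case of $\tc_n$, bounded by a constant uniformly in $n$; as $n^{-1} \le n^{-1/2}$, the second term is therefore $O(n^{-1/2} M_n^2)$. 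For the third term I would invoke the Cauchy estimate established in Remark~\ref{rem:1}, namely $|\E f(\htc^{(M)}) - \E f(\htc^{(N)})| \le p_M$ for all $N \ge M$; letting $N \to \infty$ and recalling the definition \eqref{eq:rmk1} of $\E f(\htc)$ gives $|\E f(\htc^{(M_n)}) - \E f(\htc)| \le p_{M_n}$.

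Adding the three bounds yields $|\E f(\tc_n) - \E f(\htc)| \le 2 p_{M_n} + O(n^{-1/2} M_n^2)$, which is the claimed estimate. The argument is essentially bookkeeping once Lemma~\ref{lem:aux_exp} and Remark~\ref{rem:1} are available; the only points that need a little care are that the $O$-constant in Lemma~\ref{lem:aux_exp} is \emph{uniform} in $M$ --- this is exactly what legitimises substituting the growing cut-off $M_n$ --- and the inequality $\alpha + 1 \le \max\{2\alpha, 1\}$, which is what makes the degree moments appearing in that lemma finite and bounded. I do not anticipate a genuine obstacle here.
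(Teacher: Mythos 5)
Your proposal is correct and follows essentially the same route as the paper: the same three-term triangle-inequality decomposition through the cut-off $M_n$, with the first term handled by \eqref{eq:thm1-3}, the last by the Cauchy estimate of Remark~\ref{rem:1}, and the middle term by Lemma~\ref{lem:aux_exp} combined with the moment bounds \eqref{eq:prem3}. The observation $\alpha+1\le\max\{2\alpha,1\}$ and the uniformity of the constant in Lemma~\ref{lem:aux_exp} are exactly the points the paper relies on as well, so nothing is missing.
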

\begin{proof}
	We let $M_n$ be defined as in Theorem~\ref{thm:normality}, but write $M = M_n$ for easier reading. Notice first that 
	\begin{multline}\label{eq:diff}
		|\E f(\tc_n) - \E f(\htc)| \\
		\le |\E f(\tc_n) - \E f(\tc_n\po)|  + |\E f(\hat \tc\po) - \E f(\htc)|+ |\E f(\tc_n\po) - \E f(\htc\po)|.
	\end{multline}
	The first term on the right side is at most $p_M$ by assumption \eqref{eq:thm1-3}. The second term is also bounded above by $p_M$ in view of \eqref{eq:thm1-2}, using the same argument as in Remark \ref{rem:1}:  we have
	\begin{align*}
		|\E f(\htc^{(N)}) - \E f(\htc\po)|  
		&=  \left|\E \left( f(\htc\po)-\E\left(f(\htc^{(N)})|\htc^{(M)}\right)\right)\right|\\
		&\le \E \left|  f(\htc\po)-\E\left(f(\htc^{(N)})|\htc^{(M)}\right)\right|\leq p_M,
	\end{align*}
	uniformly for $N\geq M$. Therefore, 
	$$
	|\E f(\htc) - \E f(\htc\po)| = 
	\lim_{N\to\infty}|\E f(\htc^{(N)}) - \E f(\htc\po)|\leq p_M.
	$$ 
	By Lemma~\ref{lem:aux_exp} we have
	\[
	|\E f(\tc_n\po) - \E f(\htc\po)|\ll n^{-1/2} M^2 \E (\deg(\htc)^{\alpha+1})+n^{-1}M^2\E (\deg(\tc_n)^{\alpha+1}).
	\] 
	In view of \eqref{eq:prem3}, the moment $\E (\deg(\htc)^{\alpha+1})$ is finite and   $\E (\deg(\tc_n)^{\alpha+1})$ is $O(1)$ as $n\to\infty$. Therefore, we conclude that 
	$$
	|\E f(\tc_n) - \E f(\htc)|\ll p_M+n^{-1/2}M^2=p_{M_n}+n^{-1/2}{M_n}^2,
	$$
	which is equivalent to the statement in the lemma.
\end{proof}

Lemma~\ref{lem:expectation} is already enough to prove the estimate for the mean  $\E F(\tc_n)$ as it is stated in \eqref{eq:meanF}. This is a consequence of \cite[Theorem 1.5]{J16}.

\begin{proposition}\label{prop:mean}
Assuming that $f$ satisfies the conditions of Theorem~\ref{thm:normality}, then 
$$\E F(\tc_n)=n \mu +o(\sqrt{n}),$$
where $\mu=\E f(\tc)$.
\end{proposition}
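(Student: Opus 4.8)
The plan is to express $F(\tc_n)$ as a sum of fringe contributions and exploit the asymptotic relation between $\E f(\tc_n)$ and $\E f(\htc)$ established in Lemma~\ref{lem:expectation}. Writing $F(\tc_n) = \sum_{v \in \tc_n} f(\tc_{n,v})$ over all nodes $v$ of $\tc_n$, and grouping the nodes by the size $m = |\tc_{n,v}|$ of the fringe subtree rooted at $v$, we have
\[
\E F(\tc_n) = \sum_{m=1}^{n} \E\bigl( N_{n,m} \bigr) \, \E\bigl( f(\tc_m) \bigr),
\]
where $N_{n,m}$ denotes the number of nodes $v$ in $\tc_n$ with $|\tc_{n,v}| = m$, since conditionally on its size a fringe subtree of $\tc_n$ is distributed as $\tc_m$. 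This is exactly the structure covered by \cite[Theorem 1.5]{J16}: that result gives $\E F(\tc_n) = n\mu + o(\sqrt n)$ provided the toll function satisfies suitable growth and convergence conditions, with $\mu = \E f(\tc)$ playing the role of the limiting per-node contribution.

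The key point to verify is therefore that our hypotheses — the polynomial growth bound \eqref{eq:thm1-1} together with the approximation conditions \eqref{eq:thm1-2}, \eqref{eq:thm1-3} and the summability condition \eqref{eq:thm1-4} — imply the hypotheses of \cite[Theorem 1.5]{J16}. First I would record that $\mu = \E f(\tc)$ is well-defined and finite, which follows from \eqref{eq:thm1-1} and the fact that $\deg(\tc)$ has distribution $\xi$ with $\E\xi^{2\alpha+1} < \infty$. Next, Lemma~\ref{lem:expectation} gives
\[
\E f(\tc_n) = \E f(\htc) + O\bigl( p_{M_n} + n^{-1/2} M_n^2 \bigr),
\]
and by \eqref{eq:thm1-4} the error term here is $o(1)$ (indeed $n^{1/2} a_n = n^{\max\{\alpha,1\}} p_{M_n} + M_n^2 \geq M_n^2$ and $a_n \to 0$ forces $n^{-1/2}M_n^2 \to 0$, while $p_{M_n} \to 0$ since $n^{\max\{\alpha,1\}} p_{M_n} = n^{1/2} a_n - n^{-1/2} M_n^2$ is controlled). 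Hence $\E f(\tc_n) \to \E f(\htc)$, and one must identify this limit with $\mu = \E f(\tc)$; this is the standard fact (used implicitly in \cite{J16}) that for toll functions with the right integrability the Cesàro-type averaging inherent in summing over fringe subtrees replaces $\E f(\htc)$ by $\E f(\tc)$, essentially because $\tc_n$ converges locally to $\htc$ while a typical fringe subtree converges to $\tc$.

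Concretely, I would combine Lemma~\ref{lem:expectation} with the estimate \eqref{eq:contiguity} on the rate of local convergence, exactly as in the proof of Lemma~\ref{lem:aux_exp}, to bound the sum $\sum_m \E(N_{n,m}) |\E f(\tc_m) - \mu|$. The contribution of small fringe subtrees (say $m \leq \varepsilon n$) is handled by the convergence $\E f(\tc_m) \to \mu$ together with the bound $\sum_m \E(N_{n,m}) = n$; here one needs that the tail of $|\E f(\tc_m) - \mu|$ is summable in an appropriate averaged sense, which is where condition \eqref{eq:thm1-4}, in particular $\sum_n a_n/n < \infty$, enters — it guarantees that the error terms from Lemma~\ref{lem:expectation} aggregate to $o(\sqrt n)$ rather than merely $o(n)$. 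The contribution of large fringe subtrees is controlled using \eqref{eq:thm1-1}, the moment bounds in Lemma~\ref{lem:Aux1}, and the fact that $\E(N_{n,m})$ decays for $m$ close to $n$.

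The main obstacle I anticipate is the bookkeeping needed to get the sharper $o(\sqrt n)$ error rather than the easy $o(n)$: one must track how the error term $p_{M_n} + n^{-1/2} M_n^2$ from Lemma~\ref{lem:expectation}, applied at each scale $m$, accumulates when weighted by $\E(N_{n,m})$, and verify that the summability half of \eqref{eq:thm1-4} is exactly what makes the weighted sum $o(\sqrt n)$. Since this is precisely the content of \cite[Theorem 1.5]{J16} once the hypotheses are matched, the cleanest route is to quote that theorem and devote the proof to checking its assumptions; the only genuinely new input is Lemma~\ref{lem:expectation}, which supplies the required rate of convergence of $\E f(\tc_n)$ to the constant $\mu$.
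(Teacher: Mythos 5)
Your high-level plan---quote \cite[Theorem 1.5(i)]{J16} and use Lemma~\ref{lem:expectation} as the only new input---is exactly the right one, and it is what the paper does. But there is a genuine error at the centre of your argument: you propose to ``identify'' the limit $\E f(\htc)$ of $\E f(\tc_n)$ with $\mu=\E f(\tc)$. These are different constants in general, because $\htc$ is the local limit of $\tc_n$ \emph{seen from the root}, whereas $\tc$ is the limit of a \emph{typical fringe subtree}. For instance, for $f(T)=\I_{\{\deg(T)=0\}}$ (so $F$ counts leaves) one has $\mu=\E f(\tc)=\Prob(\xi=0)>0$, while $\E f(\htc)=0$ since the root of $\htc$ has the size-biased degree distribution and is never a leaf. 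Consequently your claim that ``$\E f(\tc_m)\to\mu$'' is false, and the quantity you propose to bound, $\sum_m \E(N_{n,m})\,|\E f(\tc_m)-\mu|$, is of order $n$ whenever $\E f(\htc)\neq\E f(\tc)$; it can never yield $o(\sqrt n)$. (The correct quantity would be $\sum_m (\E N_{n,m}-n\pi_m)\E f(\tc_m)$ with $\pi_m=\Prob(|\tc|=m)$, which is insensitive to adding a constant to $f$ because $\sum_m \E N_{n,m}=n=\sum_m n\pi_m$.)

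The missing step is a shift, not an identification. Janson's Theorem 1.5(i) has the hypothesis $\E f(\tc_n)\to 0$, which your $f$ does not satisfy. So pass to $f'(T):=f(T)-\E f(\htc)$; conditions \eqref{eq:thm1-2} and \eqref{eq:thm1-3} are invariant under adding a constant, so $f'$ satisfies the hypotheses of Theorem~\ref{thm:normality} with the same $(p_M)$ and $(M_n)$, and Lemma~\ref{lem:expectation} applied to $f'$ gives $\E f'(\tc_n)=O(p_{M_n}+n^{-1/2}M_n^2)\to 0$. Theorem 1.5(i) then yields $\E F'(\tc_n)=n\E f'(\tc)+o(\sqrt n)$, and since $F'(\tc_n)=F(\tc_n)-n\,\E f(\htc)$ deterministically while $\E f'(\tc)=\E f(\tc)-\E f(\htc)$, the constant $\E f(\htc)$ cancels and one recovers $\E F(\tc_n)=n\E f(\tc)+o(\sqrt n)$. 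With this shift inserted, the rest of your outline (finiteness of $\mu$ from \eqref{eq:thm1-1}, the deduction $p_{M_n}\to0$ and $n^{-1/2}M_n^2\to0$ from $a_n\to0$) is fine and there is no need to redo the bookkeeping of Janson's proof over fringe-subtree sizes.
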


\begin{proof}
Consider the shifted functional $F'(T)$ whose toll function is defined by $f'(T)=f(T)-\E f(\htc).$ Since $\E f(\htc)$ does not depend on $T$, we have 
\begin{equation}\label{eq:mean_proof}
\E F'(\tc_n)=\E F(\tc_n)-n \E f(\htc).
\end{equation}
Furthermore, notice that in the conditions of Theorem \ref{thm:normality}, \eqref{eq:thm1-2} and \eqref{eq:thm1-3} are unchanged if the toll function $f$ is shifted by a constant.  Hence, $f'$ also satisfies the conditions of Theorem \ref{thm:normality} where \eqref{eq:thm1-2} and \eqref{eq:thm1-3} hold with the same sequences $(p_M)_{M\geq 0}$ and $(M_n)_{n\geq 1}$. Thus, applying Lemma \ref{lem:expectation} to the toll function $f'$, we obtain
\[
\E f'(\tc_n) \ll p_{M_n} + n^{-1/2} \, M_n^2 ) \ll a_n,
\]
where $a_n$ is defined in Theorem \ref{thm:normality}. Since $a_n\to 0$ as $n\to\infty$, we have $\E f'(\tc_n)\to 0$ as $n\to\infty$. Therefore, by Part (i) of \cite[Theorem 1.5]{J16}, we deduce that 
\[
\E F'(\tc_n)=n\E f'(\tc)+o(\sqrt{n})=n\left(\E f(\tc)-\E f(\htc)\right)+o(\sqrt{n}).
\]
The latter and \eqref{eq:mean_proof} imply the result.
\end{proof}

%%%%%%%%%%%%%%%%%%%%%%%%%%%%%

%%%%%%%%%%%%%%%%%%%%%%%%%%%%%%%

%%%%%%%%%%%%%

Using the same notation as in Lemma~\ref{lem:Aux2} we obtain the following estimate of the variance.

\begin{lemma}\label{lem:variance}
	Assume that $f$ satisfies the conditions of Theorem~\ref{thm:normality} and let $(p_M)_{M\geq 1}$ and $(M_n)_{n}$ be the corresponding sequences. Moreover, set $a_k = k^{-1/2} (k^{\max\{\alpha,1\}}\, p_{M_k}+M_k^2)$ (as in Theorem~\ref{thm:normality}) and $\mu_k = \E f(\tc_k)$. Then, for any subset $\mathcal{S}$ of $\mathbb{N}$, we have
	\begin{equation}\label{eq:var}
	n^{-1/2}\Var\left( F_{\mathcal{S}}(\tc_n) \right)^{1/2} \ll \left(\sup_{k\in \mathcal{S}} a_k+\sum_{k\in  \mathcal{S}} \frac{a_k}{k}\right)^{1/2} + \sup_{k\in  \mathcal{S}} |\mu_k| + \sum_{k\in  \mathcal{S}}\frac{|\mu_k|}{k}.
	\end{equation}
\end{lemma}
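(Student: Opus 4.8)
The plan is to split $F_{\mathcal{S}}$ into a \emph{centred} part and a part that only sees subtree sizes, and to bound the two variances separately. Recall from the notation introduced before Lemma~\ref{lem:Aux2} that $f_{\mathcal{S}}^{(0)}(T)=f_{\mathcal{S}}(T)-\E f_{\mathcal{S}}(\tc_{|T|})=\bigl(f(T)-\mu_{|T|}\bigr)\I_{\{|T|\in\mathcal{S}\}}$, so that $f_{\mathcal{S}}=f_{\mathcal{S}}^{(0)}+\tilde f$ with $\tilde f(T):=\mu_{|T|}\I_{\{|T|\in\mathcal{S}\}}$; write $\tilde F$ for the additive functional with toll $\tilde f$. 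Since variances are translation invariant, the triangle inequality in $L^2$ gives
\[
\Var\bigl(F_{\mathcal{S}}(\tc_n)\bigr)^{1/2}\le\Var\bigl(F_{\mathcal{S}}^{(0)}(\tc_n)\bigr)^{1/2}+\Var\bigl(\tilde F(\tc_n)\bigr)^{1/2},
\]
so it suffices to prove the two bounds $\Var(F_{\mathcal{S}}^{(0)}(\tc_n))\ll n\bigl(\sup_{k\in\mathcal{S}}a_k+\sum_{k\in\mathcal{S}}a_k/k\bigr)$ and $\Var(\tilde F(\tc_n))\ll n\bigl(\sup_{k\in\mathcal{S}}|\mu_k|+\sum_{k\in\mathcal{S}}|\mu_k|/k\bigr)^2$, from which \eqref{eq:var} follows on dividing by $n^{1/2}$.

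For the centred part I would first use that $\E F_{\mathcal{S}}^{(0)}(\tc_m)=0$ for every $m$ (established in the proof of Lemma~\ref{lem:Aux2}; see also \cite[(6.25)]{J16}), so that $\Var(F_{\mathcal{S}}^{(0)}(\tc_n))=\E F_{\mathcal{S}}^{(0)}(\tc_n)^2$. Writing $x_v:=f_{\mathcal{S}}^{(0)}(\tc_{n,v})$ for $v\in\tc_n$, one has $\sum_{w\succeq v}x_w=F_{\mathcal{S}}^{(0)}(\tc_{n,v})$, and grouping pairs $(v,w)$ according to the ancestor relation gives the elementary identity
\[
\Bigl(\sum_v x_v\Bigr)^2=2\sum_v x_v\,F_{\mathcal{S}}^{(0)}(\tc_{n,v})-\sum_v x_v^2+\sum_{v\perp w}x_v x_w,
\]
where the last sum runs over ordered pairs of incomparable nodes $v\perp w$. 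On taking expectations the incomparable cross terms vanish: conditioned on the sizes of the two fringe subtrees hanging from distinct children of the last common ancestor of $v$ and $w$, those subtrees are independent conditioned Galton--Watson trees, so $\E(x_vx_w)=0$ by $\E F_{\mathcal{S}}^{(0)}(\tc_m)=0$. Since a fringe subtree of $\tc_n$ conditioned on having $m$ nodes is distributed as $\tc_m$, the remaining sums collapse node-by-node, yielding
\[
\Var\bigl(F_{\mathcal{S}}^{(0)}(\tc_n)\bigr)\le 2\sum_{m=1}^{n}\E(N_m)\,\bigl|\E\bigl(f_{\mathcal{S}}^{(0)}(\tc_m)F_{\mathcal{S}}^{(0)}(\tc_m)\bigr)\bigr|,
\]
where $N_m:=\#\{v\in\tc_n:|\tc_{n,v}|=m\}$. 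Only $m\in\mathcal{S}$ contribute, and for those Lemma~\ref{lem:Aux2}, combined with the fact that $\E\deg(\tc_m)^{2\alpha}$ and $\E\deg(\tc_m)^{\alpha+1}$ are $O(1)$ by \eqref{eq:prem3}, bounds the inner expectation by $O\bigl(m^{\max\{\alpha,1\}}p_{M_m}+M_m^2\bigr)=O(m^{1/2}a_m)$ (the two lower-order terms being absorbed since $m^{1/2}a_m\ge M_m^2\ge1$).

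It then remains to estimate $\sum_{m\in\mathcal{S}}\E(N_m)m^{1/2}a_m$, which I would split at $m=n/2$. For $m\le n/2$ the standard bound $\E(N_m)\ll n/m^{3/2}$ for the expected number of fringe subtrees of a prescribed size gives a contribution $\ll n\sum_{m\in\mathcal{S}}a_m/m$. For $m>n/2$ I bound $m^{1/2}\le n^{1/2}$ and use that the nodes $v$ of $\tc_n$ with $|\tc_{n,v}|>n/2$ lie on a single root-to-leaf path, so that $\sum_{m>n/2}\E(N_m)=\E\,\#\{v\in\tc_n:|\tc_{n,v}|>n/2\}\le 1+\E\,\mathrm{height}(\tc_n)\ll n^{1/2}$ (alternatively, $\sum_m m\,\E(N_m)=\E\sum_{v}|\tc_{n,v}|=n+\E\sum_v d(v)\ll n^{3/2}$, whence $\sum_{m>n/2}\E(N_m)\ll n^{-1}\cdot n^{3/2}$); this range therefore contributes $\ll n\sup_{k\in\mathcal{S}}a_k$. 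Altogether $\Var(F_{\mathcal{S}}^{(0)}(\tc_n))\ll n\bigl(\sup_{k\in\mathcal{S}}a_k+\sum_{k\in\mathcal{S}}a_k/k\bigr)$.

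Finally, $\tilde F$ is an additive functional whose toll depends only on the size of the tree --- equivalently $\tilde F(\tc_n)=\sum_{k\in\mathcal{S}}\mu_k N_k$, a size-weighted count of fringe subtrees, and its natural centring $\tilde f^{(0)}$ is identically zero --- and for such functionals the estimate $\Var(\tilde F(\tc_n))\ll n\bigl(\sup_{k\in\mathcal{S}}|\mu_k|+\sum_{k\in\mathcal{S}}|\mu_k|/k\bigr)^2$ is the size-dependent variance bound underlying Janson's analysis of additive functionals in \cite{J16}, which one may either cite directly or reprove by the same conditioning argument, this time keeping track of the non-zero means $\mu_k$. Inserting both bounds into the $L^2$ triangle inequality completes the proof. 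I expect the two most delicate points to be the vanishing of the incomparable cross terms --- which has to be justified by conditioning on the part of $\tc_n$ lying outside the two relevant subtrees, together with the exact description of a fringe subtree conditioned on its size --- and isolating the size-only variance bound for $\tilde F$ from \cite{J16} in the explicit, uniform (rather than merely asymptotic) form needed here.
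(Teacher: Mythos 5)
Your proposal is correct and follows essentially the same route as the paper: the same decomposition $f_{\mathcal{S}}=f_{\mathcal{S}}^{(0)}+\tilde f$ with Minkowski's inequality, the citation of Janson's size-dependent variance bound for the $\tilde f$ part, and the reduction of $\Var(F_{\mathcal{S}}^{(0)}(\tc_n))$ to $\E(f_{\mathcal{S}}^{(0)}(\tc_k)F_{\mathcal{S}}^{(0)}(\tc_k))$, which is then controlled by Lemma~\ref{lem:Aux2} and summed with the same split at $k=n/2$. The only difference is that you re-derive the inequality $\Var(F_{\mathcal{S}}^{(0)}(\tc_n))\le 2\sum_m \E(N_m)\,\E(f_{\mathcal{S}}^{(0)}(\tc_m)F_{\mathcal{S}}^{(0)}(\tc_m))$ from the vanishing of the incomparable cross terms, whereas the paper simply cites it as \cite[(6.28)]{J16} together with \cite[Lemma 5.2]{J16} for the coefficients $\E(N_m)\asymp n\pi_m\Prob(S_{n-m}=n-m)/\Prob(S_n=n-1)$; your derivation is a correct rendering of Janson's argument.
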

\begin{proof}
	We follow the proof of \cite[Theorem 6.12]{J16}. We start with a decomposition $f_\mathcal{S}(T) = f_\mathcal{S}^{(0)}(T) + f_\mathcal{S}^{(1)}(T)$, where $f_\mathcal{S}^{(0)}(T) = f_\mathcal{S}(T) - \mu_{|T|}$ and $f_\mathcal{S}^{(1)}(T) = \mu_{|T|}$ if $|T|\in \mathcal{S}$ and both are zero otherwise. This induces a decomposition  $F_\mathcal{S}=F_\mathcal{S}^{(0)}+F_\mathcal{S}^{(1)}$ of the functional $F_\mathcal{S}$, where $F_\mathcal{S}^{(0)}$ and $F_\mathcal{S}^{(1)}$ are the additive functionals defined by the toll functions $f_\mathcal{S}^{(0)}$ and $f_\mathcal{S}^{(1)}$ respectively. In view of Minkowski's inequality $\Var (X + Y)^{1/2} \le \Var (X)^{1/2} + \Var (Y)^{1/2}$, we can estimate the variances $\Var F_\mathcal{S}^{(0)}(\tc_n)$ and $\Var F_\mathcal{S}^{(1)}(\tc_n)$ separately. Note the following important observations:
	\begin{enumerate}
		\item [(a)]  $f_\mathcal{S}^{(1)}(T)$ depends on $|T|$ only,
		\item[(b)]  $\E f_\mathcal{S}^{(0)}(\tc_k) = 0$ for every $k\geq 1.$
	\end{enumerate}
	If a toll function of an additive functional depends only on tree sizes, then a bound on the variance of the corresponding additive functional at $\tc_n$ is given in \cite[Theorem 6.7]{J16}. This applies to our toll function $f_\mathcal{S}^{(1)}$, and we obtain
	\begin{equation}\label{eq:var_first}
	n^{-1/2}\Var\left( F_\mathcal{S}^{(1)}(\tc_n) \right)^{1/2} \ll  \sup_{k\geq 1 } |\E f_\mathcal{S}^{(1)}(\tc_k)| + \sum_{k=1}^{\infty} \frac{|\E f_\mathcal{S}^{(1)}(\tc_k)| }{k} = \sup_{k\in \mathcal{S}} |\mu_k| + \sum_{k\in \mathcal{S}} \frac{|\mu_k|}{k} . 
	\end{equation}
Next, we  consider $\Var (F_\mathcal{S}^{(0)}(\tc_n))$. By \cite[(6.28)]{J16}, we have
	\begin{equation}\label{eq:variance0}
	\frac 1 n \Var\left( F_\mathcal{S}^{(0)}(\tc_n) \right) \le 2\sum_{k=1}^n\frac{\prob{S_{n-k} = n - k}}{\prob{S_n = n - 1}} \pi_k \E (f_\mathcal{S}^{(0)}(\tc_k) F_\mathcal{S}^{(0)}(\tc_k)),
	\end{equation}
	where $\pi_k = \prob{|\tc| = k}$, and $S_k$ is the sum of $k$  independent copies of $\xi$. From \cite[Lemma 5.2]{J16}, we know that  
	$$
	\frac{\prob{S_{n-k} = n - k}}{\prob{S_n = n - 1}} \ll \frac{n^{1/2}}{(n-k+1)^{1/2}},
	$$
	uniformly for $1\leq k\leq n$. Recalling that $\pi_k = O (k^{-3/2})$, which is a well-known fact but can also be found in \cite[(4.13)]{J16},  we obtain 
	\begin{equation}\label{eq:variance}
	\frac 1 n \Var\left( F_\mathcal{S}^{(0)}(\tc_n) \right) \ll \sum_{k=1}^n\frac{n^{1/2}}{(n-k+1)^{1/2}\, k^{3/2}}\E (f_\mathcal{S}^{(0)}(\tc_k) F_\mathcal{S}^{(0)}(\tc_k)).
	\end{equation}
	For  $k\notin \mathcal{S}$, we have $\E (f_\mathcal{S}^{(0)}(\tc_k) F_\mathcal{S}^{(0)}(\tc_k))=0$, and  for $k\in \mathcal{S}$ we use  Lemma \ref{lem:Aux2} to estimate $\E (f_\mathcal{S}^{(0)}(\tc_k) F_\mathcal{S}^{(0)}(\tc_k))$. Once again, by means of the second estimate in \eqref{eq:prem3}, both  $\E(\deg(\tc_{k})^{2\alpha})$ and $\E(\deg(\tc_{k})^{\alpha+1})$ are bounded above by constants. Thus, for $k\in \mathcal{S}$, we deduce that
	\begin{equation}\label{eq:fF}
	\E (f_\mathcal{S}^{(0)}(\tc_k) F_\mathcal{S}^{(0)}(\tc_k))\ll k^{\max\{\alpha,\,1\}} \, p_{M_k}+M_k^2 =k^{1/2}a_k.
	\end{equation}
 Applying \eqref{eq:fF} to \eqref{eq:variance}, we get
	\begin{align*}
	\frac 1 n \Var\left( F_\mathcal{S}^{(0)}(\tc_n) \right) 
	& \ll 
	\sum_{k=1}^n \frac{n^{1/2}\, a_k\, \I_{\{k\in \mathcal{S}\}}}{(n-k+1)^{1/2}\,k}  \\
	& \ll \sum_{k = 1}^{n/2} \frac{a_k}{k}\I_{\{k\in \mathcal{S}\}} + \sup_{k \in \mathcal{S}} a_k \sum_{n/2\leq k\leq n} \frac{1}{(n-k+1)^{1/2}\,n^{1/2}}.
	\end{align*}
Noting that the last sum on the right side is bounded by a constant, we obtain
\begin{equation}\label{eq:var_last}
\frac 1 n \Var\left( F_\mathcal{S}^{(0)}(\tc_n) \right) \ll \sup_{k \in \mathcal{S}} a_k + \sum_{k\in \mathcal{S}}\frac{a_k}{k}.
\end{equation}
The proof is complete by applying  Minkowski's inequality to combine \eqref{eq:var_first} and \eqref{eq:var_last}.
\end{proof}

\section{Central limit theorem}
We use a truncation argument as in the proof of \cite[Theorem 1.5]{J16}. This is formulated in the following lemma: 
\begin{lemma}\label{lem:0}
	Let $(X_n)_{n\geq 1}$ and $(W_{N,n})_{N,n\geq 1}$ be sequences of centred random variables. If we have  
	\begin{itemize}
		\item $W_{N,n}\overset{d}{\to}_n W_N$, and  $ W_N \overset{d}{\to}_N W,$ for some random variables $W,$ $W_1$, $W_2$, \dots 
		\item $\mathrm{Var}(X_n-W_{N,n})=O(\sigma^2_N)$ uniformly in $n$, and $\sigma^2_N\to_N0$,
	\end{itemize}
	then 
	$
	X_n\overset{d}{\to}_n W.
	$
\end{lemma}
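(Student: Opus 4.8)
The plan is to prove this via the standard approximation-by-limits technique for convergence in distribution, using characteristic functions (or, equivalently, a metric for weak convergence such as the L\'evy--Prokhorov metric). Since we are dealing with real-valued random variables, I would work with characteristic functions $\varphi_{X}(t) := \E \e^{\i t X}$ throughout, as they turn the ``triangular array'' structure of the hypotheses into clean estimates.

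First, I would record the two consequences of the second hypothesis. By Chebyshev's inequality, $\Var(X_n - W_{N,n}) = O(\sigma_N^2)$ uniformly in $n$ means that $X_n - W_{N,n}$ is small in probability, uniformly in $n$, once $N$ is large: for every $\varepsilon>0$, $\sup_n \Prob(|X_n - W_{N,n}| > \varepsilon) \le \varepsilon^{-2} O(\sigma_N^2) \to 0$ as $N\to\infty$. Equivalently, at the level of characteristic functions, $|\varphi_{X_n}(t) - \varphi_{W_{N,n}}(t)| \le \E|\e^{\i t(X_n - W_{N,n})} - 1| \le |t|\, \E|X_n - W_{N,n}| \le |t|\sqrt{\Var(X_n-W_{N,n})} = |t|\, O(\sigma_N)$, where I used that both variables are centred so $\E|X_n - W_{N,n}| \le \Var(X_n-W_{N,n})^{1/2}$. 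This bound is uniform in $n$.

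The main step is then a three-epsilon argument. Fix $t \in \mathbb{R}$; I want to show $\varphi_{X_n}(t) \to \varphi_W(t)$ as $n\to\infty$. Write
\[
|\varphi_{X_n}(t) - \varphi_W(t)| \le |\varphi_{X_n}(t) - \varphi_{W_{N,n}}(t)| + |\varphi_{W_{N,n}}(t) - \varphi_{W_N}(t)| + |\varphi_{W_N}(t) - \varphi_W(t)|.
\]
Given $\varepsilon > 0$, first choose $N$ so large that the first term is $< \varepsilon/3$ for all $n$ (possible by the uniform bound above, since $\sigma_N \to 0$) and simultaneously the third term is $< \varepsilon/3$ (possible since $W_N \dto W$ implies $\varphi_{W_N}(t) \to \varphi_W(t)$ by L\'evy's continuity theorem). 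With this $N$ now fixed, the first hypothesis gives $W_{N,n} \dto_n W_N$, hence $\varphi_{W_{N,n}}(t) \to \varphi_{W_N}(t)$, so there is $n_0$ such that the middle term is $< \varepsilon/3$ for all $n \ge n_0$. Combining, $|\varphi_{X_n}(t) - \varphi_W(t)| < \varepsilon$ for $n \ge n_0$. Since $t$ and $\varepsilon$ were arbitrary, $\varphi_{X_n} \to \varphi_W$ pointwise, and another application of L\'evy's continuity theorem yields $X_n \dto_n W$.

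The only genuine subtlety — and the step I would be most careful with — is the order of quantifiers: one must choose $N$ \emph{before} $n_0$, exploiting that the bound on $|\varphi_{X_n}(t) - \varphi_{W_{N,n}}(t)|$ is uniform in $n$; this is exactly why the hypothesis $\Var(X_n - W_{N,n}) = O(\sigma_N^2)$ is stated uniformly in $n$. Everything else is routine. (One could equally phrase the whole argument in terms of the L\'evy--Prokhorov or bounded-Lipschitz metric: the uniform variance bound controls $d(X_n, W_{N,n})$ uniformly in $n$, $W_{N,n}\dto_n W_N$ controls $d(W_{N,n}, W_N)\to 0$, and $W_N \dto_N W$ controls $d(W_N, W)\to 0$, and the triangle inequality finishes it; the characteristic-function version is cleanest to write out.)
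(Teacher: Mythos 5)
Your proof is correct. Note, however, that the paper does not prove this lemma at all: it simply observes that the statement is a special case of a standard approximation theorem for weak convergence (Billingsley, Theorem 4.2, or Kallenberg, Theorem 4.28), which says that $W_{N,n}\dto_n W_N$, $W_N\dto_N W$ and $\lim_{N}\limsup_n \Prob(|X_n-W_{N,n}|\ge\varepsilon)=0$ for every $\varepsilon>0$ together imply $X_n\dto_n W$. Your opening Chebyshev step is exactly the verification of the third hypothesis from the uniform variance bound, and this is the only place where the centring assumption enters (via $\E|X_n-W_{N,n}|\le \Var(X_n-W_{N,n})^{1/2}$, which needs $\E(X_n-W_{N,n})=0$). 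The remainder of your argument --- the bound $|\varphi_{X_n}(t)-\varphi_{W_{N,n}}(t)|\le |t|\,\E|X_n-W_{N,n}|$ uniform in $n$, followed by the three-term triangle inequality with $N$ chosen before $n_0$ --- is a correct, self-contained proof of the cited theorem itself, and you correctly identify the quantifier order as the one genuinely delicate point. So your route is a fully worked-out version of what the paper outsources to a reference; it buys self-containedness at the cost of a few lines, and nothing in it is wrong.
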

This lemma is a simple consequence of \cite[Theorem 4.28]{K02} or \cite[Theorem 4.2]{B68}.

\begin{proof}[Proof of Theorem~\ref{thm:normality}]
	We may assume, without loss of generality, that $\E f(\htc)=0$, by subtracting $\E f(\htc)$ from $f$ if it is not zero, because shifting $f$ by a constant will only add a deterministic term in $F(\tc_n)$ and $f$ still satisfies the conditions of Theorem~\ref{thm:normality} where the sequences $(p_M)_{M\geq 1}$ and $(M_n)_{n\geq 1}$ remain the same.  For each $k$, let $\mu_k$ denote the expectation $\E f(\tc_k)$ as before. By Lemma~\ref{lem:expectation}, we have
	\begin{equation}\label{eq:clt_a}
	|\mu_k|=|\E f(\tc_k)|\ll p_{M_k}+k^{-1/2}M_k^2\leq a_k. 
	\end{equation}
	For a positive integer $N$, let $f^{(N)}$ be the truncated functional defined by $f^{(N)}(T)=f(T)\, \I_{\{|T|<N\}}$ (i.e. $f^{(N)}(T)=f(T)$ if $|T|<N$, and $f^{(N)}(T)=0$ otherwise) and let $F^{(N)}$ be the additive functional associated with the toll function $f^{(N)}$. It is important to notice that $f^{(N)}$ is local, for any fixed $N$.  Note further that $\E f^{(N)}(\tc_k)= \mu_k$ if $k<N$, and zero otherwise. Hence, we have $|\E f^{(N)}(\tc_k)|\leq |\mu_k|$ for all positive integers $N$ and $k$. Let
	$$
	W_{N,n}:= \frac{F^{(N)}(\tc_n)-\E F^{(N)}(\tc_n)}{\sqrt{n}}, \, \text{ and }\, X_n:=\frac{F(\tc_n)-\E F(\tc_n)}{\sqrt{n}}.
	$$
	Since $f^{(N)}$ has finite support, by \cite[Theorem 1.5]{J16},  we have 
	$$
	W_{N,n}\overset{d}{\to}_n  \mathcal{N}(0,\gamma_N^2),
	$$ 
	where
	\begin{align*}
		\gamma_N^2
		& =\lim_{n\to\infty}n^{-1}\mathrm{Var}(F^{(N)}(\tc_n))\\
		&=2 \E \left(f^{(N)}(\tc)\, (F^{(N)}(\tc)-|\tc|\mu^{(N)})\right)-\mathrm{Var}f^{(N)}(\tc)-\frac{(\mu^{(N)})^2}{\sigma^2}, 
	\end{align*}
	and $\mu^{(N)}=\E f^{(N)}(\tc)$. 
	
	\medskip
	Next we need to show that $\lim_{N\to\infty}\gamma_N$ exists. To that end, we take an arbitrary integer $M\geq N$. We have
	$$
	\gamma_M-\gamma_N = \lim_{n\to\infty}n^{-1/2}\left(\mathrm{Var}(F^{(M)}(\tc_n))^{1/2}-\mathrm{Var}(F^{(N)}(\tc_n))^{1/2}\right)
	$$ 
	If we apply Minkowski's inequality to the random variables $F^{(M)}(\tc_n)-F^{(N)}(\tc_n)$ and $F^{(N)}(\tc_n)$, we obtain
	$$
	\mathrm{Var}(F^{(M)}(\tc_n))^{1/2}\leq \mathrm{Var}\Big(F^{(M)}(\tc_n)-F^{(N)}(\tc_n)\Big)^{1/2}+\mathrm{Var}(F^{(N)}(\tc_n))^{1/2}.
	$$
	Consequently,
	\begin{align*}
		|\gamma_M-\gamma_N|
		& = \lim_{n\to\infty}n^{-1/2} |\mathrm{Var}(F^{(M)}(\tc_n))^{1/2}-\mathrm{Var}(F^{(N)}(\tc_n))^{1/2}|\\
		& \leq \limsup_{n\to\infty}n^{-1/2}\mathrm{Var}\Big(F^{(M)}(\tc_n)-F^{(N)}(\tc_n)\Big)^{1/2}.
	\end{align*}
	The toll function associated with the functional $F^{(M)}-F^{(N)}$ is $f^{(M)}-f^{(N)}$, which is zero for all trees of order smaller than $N$. Hence, Lemma~\ref{lem:variance} can be used to estimate the variance  $\mathrm{Var}(F^{(M)}(\tc_n)-F^{(N)}(\tc_n))^{1/2}$ (for this the set $\mathcal{S}$ in Lemma~\ref{lem:variance} is chosen to be $\mathbb{N} \cap [N,\, M)$).  We obtain
	\begin{align*}
		|\gamma_M-\gamma_N|
		&  \ll \left(\sup_{k\geq N} a_k+\sum_{k= N}^{\infty} \frac{a_k}{k}\right)^{1/2} + \sup_{k\geq N} |\mu_k| + \sum_{k=N}^\infty \frac{|\mu_k|}{k}\\
		& \ll \left(\sup_{k\geq N} a_k+\sum_{k= N}^{\infty} \frac{a_k}{k}\right)^{1/2} + \sup_{k\geq N} a_k + \sum_{k=N}^\infty \frac{a_k}{k}.
	\end{align*}
	The last line follows from \eqref{eq:clt_a}. By condition \eqref{eq:thm1-4} of Theorem~\ref{thm:normality}, we also deduce that $|\gamma_M-\gamma_N|\to_N0$ uniformly for $M\geq N$. Hence, the sequence $(\gamma_N)_N$ is a Cauchy sequence, which implies that  $\gamma:=\lim_{N\to\infty}\gamma_N$ exists.
	
Similarly, we have 
	\begin{align*}
		\mathrm{Var}(X_n-W_{N,n})^{1/2}
		& =n^{-1/2}\mathrm{Var}(F(\tc_n)-F^{(N)}(\tc_n))^{1/2}.
	\end{align*}
Once again, Lemma~\ref{lem:variance} applies here, where the set $\mathcal{S}$ is $\mathbb{N} \cap [N,\, \infty)$. We obtain 
\[
n^{-1/2}\mathrm{Var}(F(\tc_n)-F^{(N)}(\tc_n))^{1/2}\ll \left(\sup_{k\geq N} a_k+\sum_{k= N}^{\infty} \frac{a_k}{k}\right)^{1/2} + \sup_{k\geq N} a_k + \sum_{k=N}^\infty \frac{a_k}{k}.
\]
Therefore, we conclude that $\mathrm{Var}(X_n-W_{N,n})^{1/2}$ tends to zero as $N\to\infty$ uniformly in $n$, so Lemma~\ref{lem:0} applies. With the estimate of $\E F(\tc_n)$ in \eqref{eq:meanF} which is proved in Proposition~\ref{prop:mean}, we obtain \eqref{eq:normality}
  and the proof of Theorem~\ref{thm:normality} is complete. \end{proof}
\section{Examples}

In this section, we give several applications of our main theorem. Every local functional (as defined in the introduction) trivially satisfies the conditions of Theorem~\ref{thm:normality}. This already gives us a number of examples to which the theorem applies, for instance the number of nodes of outdegree $r$ for every fixed $r$, or more generally the number of nodes whose outdegree lies in some prescribed set $R$. However, we want to focus on functionals in this section that are not covered by any previous results. The first example is treated in detail in Subsection~\ref{sub:ind}, where we prove a non-degenerate central limit theorem for the logarithm of the number independent sets in $\tc_n.$ In the other examples, we only verify that the conditions of Theorem~\ref{thm:normality} are satisfied by the corresponding functionals, without proving the non-degeneracy in each case (however, the approach of our first example can be applied to the others as well).   

\subsection{The number of independent sets}\label{sub:ind} An independent set is a set of vertices which does not contain two vertices that are adjacent. The number of independent sets (this number is also known as \emph{Fibonacci} number of $T$; see \cite{KPT86}) was studied in the random plane graph by Kirschenhofer, Prodinger and Tichy \cite{KPT86} who determined the formula for its expectation (see also \cite{K97,W07b}). However, in order to obtain a limiting distribution, one has to study the logarithm of the number of independent sets rather than the number itself, as we will see in the following.

Let $I(T)$ be the total number of independent sets of $T$ and $I_0(T)$ be the number of independent sets of $T$ that do not contain the root.
The quantities $I$ and $I_0$ satisfy the following recursive formulas, where $T_1, \dots, T_{\deg(T)}$ stand for the root branches:
\begin{align}
I_0(T) & = \prod_{i}I(T_i), \label{eq:ind1}\\
I(T) & = I_0(T)+\prod_{i}I_0(T_i).\label{eq:ind2}
\end{align}
The first identity holds since every independent set of $T$ that does not contain the root uniquely decomposes into independent sets in the branches. The second identity holds for essentially the same reason, taking into account those independent sets of $T$ that contain the root, which can therefore not contain any of the roots of the branches.

Note that \eqref{eq:ind1} and \eqref{eq:ind2} are also satisfied by $T = \bullet$, a tree consisting of a single node.
Anticipating a log-normal limit distribution, we define an additive functional $F(T) := \log I(T)$. From \eqref{eq:ind1} it follows that the associated toll function is
\begin{equation}\label{eq:ind_f}
f(T)=F(T) - \sum_i F(T_i) = \log \left(\frac{I(T)}{\prod_i I(T_i)}\right) = \log \left( \frac{I(T)}{I_0(T)} \right) =  \log \left(1 + \prod_i \frac{I_0(T_i)}{I(T_i)}\right).
\end{equation}
Since $I_0(T_i) \le I(T_i)$, it follows immediately that $0\leq f(T) \leq \log 2$. Hence, the condition  \eqref{eq:thm1-1} of Theorem~\ref{thm:normality} is satisfied with $\alpha=0$. 

Further, let $\rho(T) := \frac{I_0(T)}{I(T)}$.
%, which is the probability that a randomly chosen independent set of $T$ does not cover the root, when all independent sets are equally likely. 
By \eqref{eq:ind1} and \eqref{eq:ind2}, functional $\rho$ also satisfies a recursion, namely
\begin{equation}\label{eq:ind3}
\rho(T)=\frac{1}{1+\prod_{i}\rho(T_i)}.
\end{equation}
Observe that \eqref{eq:ind1}, \eqref{eq:ind2}, and \eqref{eq:ind_f} imply
\begin{equation}
	\label{eq:ind_f_rho}
	f(T) = - \log \rho(T).
\end{equation}
In order to measure the difference between $f(T)$ and $f(T\po)$ in terms of $M$, we define the exact bounds on $\rho$ given the first~$M$ levels:
\[
\rho^M_{\inf}(T) := \inf \{\rho(S) : S^{(M)} = T^{(M)}\}, \qquad \rho^M_{\sup}(T) := \sup \{\rho(S) : S^{(M)} = T^{(M)}\}.
\]
From \eqref{eq:ind_f_rho} it follows that for any tree $T$
\begin{equation}\label{eq:ind_ftau}
|f(T)-f(T\po)|\leq \log(\rho^M_{\sup}(T)/\rho^M_{\inf}(T)) =: \tau^{M}(T).
\end{equation}
In view of \eqref{eq:ind3} we have the trivial bounds $1/2 \le \rho^0_{\inf}(T) \le \rho^0_{\sup}(T) \le 1$, which imply
\begin{equation}\label{eq:ind_tautriv}
\tau^0(T) \le \log 2.
\end{equation}
%The lower one is attained by $T$ consisting of a single node and by considering $T$ consisting of a root with $n$ children which are all leaves we can have $\rho(T)$ arbitrarily close to $1$. Hence for we have 
%\begin{equation}
%	\rho^0_{\inf}(T) = 1/2, \rho^0_{\sup}(T) = 1, \text{ for every} T. 
%\end{equation}
For $M \ge 1$ the functions $\rho^M_{\inf}(T)$ and $\rho^M_{\sup}(T)$ can be determined recursively using \eqref{eq:ind3}, which gives
\begin{equation}\label{eq:ind_recursion}
 \rho^M_{\sup}(T)=\frac{1}{1+\prod_{i}\rho^{M-1}_{\inf}(T_i)} \, \text{ and }\, \rho^M_{\inf}(T)=\frac{1}{1+\prod_{i}\rho^{M-1}_{\sup}(T_i)}.
\end{equation}
%Since $\rho(T), \rho(T\po) \in [\rho^M_{\inf}(T),\,\rho^M_{\sup}(T)]$, we obtain
%\begin{equation}\label{eq:ind4}
 %\frac{\rho^M_{\inf}(T)}{\rho^M_{\sup}(T)} \leq \frac{\rho(T)}{\rho(T\po)}\leq \frac{\rho^M_{\sup}(T)}{\rho^M_{\inf}(T)}.
%\end{equation}
 Using \eqref{eq:ind_recursion} and writing $\Pi := \prod_{i}\rho^{M-1}_{\sup}(T_i)$ and $\Sigma := \sum_j\tau^{M-1}(T_j)$, we get
 \begin{equation*}
\tau^M(T)  = \log\left(\frac{1+\prod_{i}\rho^{M-1}_{\sup}(T_i)}{1+\prod_{i}\rho^{M-1}_{\inf}(T_i)}\right) 
= -\log\left(\frac{1 + e^{-\Sigma}\Pi}{1 + \Pi}\right).
 \end{equation*}
 Applying Jensen's inequality to the convex function $x \mapsto - \log x$ and using $\Pi \le 1$, we infer
 \begin{equation}\label{eq:ind_taurecursion}
	 \tau^M(T) \le \frac{\Pi}{1 + \Pi} \Sigma \le \frac{1}{2}\sum_i \tau^{M-1}(T_i).
 \end{equation}
 Let  $v_1,\, v_2,\,\dots,\, v_{w_{M}(T)}$ be the nodes of $T$ at level $M$. By applying \eqref{eq:ind_taurecursion} recursively $M$ times and using \eqref{eq:ind_tautriv}, we obtain a bound
\begin{equation}\label{eq:ind5}
\tau^{M}(T)\leq 2^{-M} \sum_{i=1}^{w_{M}(T)}\tau^0(T_{v_i})
= \frac{\log 2}{2^M}w_{M}(T) .
\end{equation}
Combining \eqref{eq:ind_ftau} and \eqref{eq:ind5} we obtain
\begin{equation}
	\label{eq:ind_fwM}
	|f(T) - f(T^{(M)})| \le \frac{\log 2}{2^M}w_{M}(T).
\end{equation}
Now we are ready to verify that the remaining conditions of Theorem~\ref{thm:normality} are satisfied by our toll function. Note that for any $N\geq M$, we have 
\[
\E \left|  f(\hat \tc^{(M)})-\E\left(f(\hat \tc^{(N)})\,  | \, \hat \tc^{(M)}\right)\right|\leq \E \left(\E \left(|f(\htc\po)-f(\htc^{(N)})|\, \Big| \, \htc\po\right)\right).
\]
Using \eqref{eq:ind_fwM}, we deduce that for any $N\geq M$,
\[
\E \left(|f(\htc\po)-f(\htc^{(N)})|\, \Big| \, \htc\po\right)\leq \frac{\log 2}{2^M}\E \left(w_M(\htc^{(N)})\, \Big| \, \htc\po\right).
\]
By taking the expectations, and using $w_M(\htc^{(N)}) = w_M(\htc)$ as well as the estimate $\E w_M(\htc)=O(M)$ (see \eqref{eq:prem1}), we get 
\begin{equation}\label{eq:ind7}
\E \left|  f(\hat \tc^{(M)})-\E\left(f(\hat \tc^{(N)})\,  | \, \hat \tc^{(M)}\right)\right|\ll  M\, 2^{-M}.
\end{equation}
To check the condition \eqref{eq:thm1-3} we use \eqref{eq:ind_fwM} and $\E w_M(\tc_n)=O(M)$ (see \eqref{eq:prem1}) and get
\begin{multline}\label{eq:ind8}
\E |f(\tc_n)-f(\tc_n\po)| =  \E \left(\E \left(|f(\tc_n)-f(\tc_n\po)| \, \Big|\, \tc_n\po \right)\right) \\
\le \E \left( \frac{\log 2}{2^M} w_M(\tc_n) \right) \ll M 2^{-M},
\end{multline}
where the implied constant is independent of $n$. To sum up, \eqref{eq:ind7} and \eqref{eq:ind8} show that assumptions \eqref{eq:thm1-2} and \eqref{eq:thm1-3} of Theorem~\ref{thm:normality} hold for a suitable choice of $p_M$ and $M_n$ with $p_M \ll M2^{-M}$  and $M_n \ll \log n$, which implies that condition~\eqref{eq:thm1-4} is satisfied.

In the following, we show that the variance constant $\gamma$ in Theorem~\ref{thm:normality} is always strictly positive for the functional $F(T) = \log I(T)$. The approach that we use also applies (mutatis mutandis) to our other examples in the following sections, so we will not explicitly prove positivity of $\gamma$ in all those cases.

As a first step, choose two trees $S_1$ and $S_2$ with the same number of vertices that both have a positive probability, i.e. $\Prob (\tc = S_1) > 0$ and $\Prob (\tc = S_2) > 0$, and also satisfy $I(S_1) > I(S_2)$ and $I_0(S_1) > I_0(S_2)$. This is always possible, for example in the following way: let $d$ be a possible outdegree for the given offspring distribution, i.e. $\Prob(\xi = d) > 0$. Now let $S_1$ be a complete $d$-ary tree of height $3$ (the root has $d$ children, each of which has $d$ children, each of which has again $d$ children, which are leaves), and let $S_2$ be a $d$-ary caterpillar with the same number of vertices, consisting of $d^2+d+1$ internal vertices that form a path, and $d^3$ leaves (each internal node has $d-1$ leaf children, except for the last, which has $d$ leaf children). One can verify that both inequalities hold for this choice of $S_1$ and $S_2$ for all $d$.

The key observation is that replacing a fringe subtree isomorpic to $S_2$ in a tree by $S_1$ increases the number of independent sets by at least a fixed factor greater than $1$. To see this, suppose that $S$ is a fringe subtree of a tree $T$ rooted at $r$, let $T'$ be the tree obtained by removing the entire fringe subtree $S$ from $T$, and let $v$ be the parent of $r$ in $T'$. If $A$ is the number of independent sets of $T'$ that do not contain $v$, and $B$ the number of independent sets of $T'$ that contain $v$, then we have (distinguishing independent sets containing and not containing $v$)
$$I(T) = A I(S) + B I_0(S).$$
As a consequence of this representation, we find that the values of $I(T)$ for $S = S_1$ and $S = S_2$ differ at least by a factor of $\eta = \min \{ \frac{I(S_1)}{I(S_2)}, \frac{I_0(S_1)}{I_0(S_2)} \} > 1$ (and at most by $\max \{ \frac{I(S_1)}{I(S_2)}, \frac{I_0(S_1)}{I_0(S_2)} \}$). 

Now consider a large random tree $\tc_n$ with $n$ vertices. We replace each occurrence of $S_1$ or $S_2$ as a fringe subtree by a marked leaf. The resulting tree, which has some number of marked leaves, is denoted by $\tc_n^*$. Given that $\tc_n^*$ has marked leaves $v_1,v_2,\ldots,v_m$, the original tree $\tc_n$ is obtained by replacing each marked node $v_i$ by a tree $R_i \in \{S_1,S_2\}$. Conditioned on the shape of $\tc_n^*$, the different fringe subtrees $R_i$ are all independent, and the probabilities $\Prob(R_i = S_1) = p > 0$ and $\Prob(R_i = S_2) = q = 1-p > 0$ are independent of $i$ and $\tc_n^*$; they only depend on the choice of $S_1$ and $S_2$.

Still conditioning on the shape of $\tc_n^*$, we would like to determine a lower bound for the variance of $F(\tc_n) = \log I(\tc_n)$. Iterated application of the law of total variance yields

\begin{align*}
\Var (F(\tc_n) | \tc_n^*) &= \E \Big( \Var( F(\tc_n) | \tc_n^*, R_1,R_2,\ldots,R_{m-1} ) | \tc_n^* \Big) \\
&\quad+ \sum_{j=2}^{m-1} 
\E \Big( \Var \big( \E ( F(\tc_n) | \tc_n^*, R_1,R_2,\ldots,R_j ) | \tc_n^*, R_1,R_2,\ldots,R_{j-1} \big) | \tc_n^* \Big) \\
&\quad+ \Var \big( \E ( F(\tc_n) | \tc_n^*, R_1 ) | \tc_n^* \big).
\end{align*}
As mentioned before, replacing a fringe subtree isomorphic to $S_2$ by a fringe subtree isomorphic to $S_1$ increases the number of independent sets at least by a factor $\eta$ (thus increases the logarithm by at least $\log \eta$), regardless of the remaining shape of the tree. Therefore, each $R_i$ contributes at least the fixed constant $p(1-p)\log^2 \eta$ to the variance decomposition above, which shows that
$$\Var (F(\tc_n) | \tc_n^*)  \gg m,$$
uniformly for all possible shapes of $\tc_n^*$. Applying the law of total variance once again and recalling that $m$ is the total number of fringe subtrees isomorphic to $S_1$ or $S_2$, we find that
$$\Var (F(\tc_n) ) \gg \E ( F_{S_1} (\tc_n)) + \E ( F_{S_2} (\tc_n)).$$
The two functionals $F_{S_1}$ and $F_{S_2}$, counting fringe subtrees isomorphic to $S_1$ and $S_2$ respectively, are additive functionals whose means are linear in $n$ with nonzero constants by our choice of $S_1$ and $S_2$ (see for example \cite[(1.10)]{J16}). Therefore, it follows that
$$\Var (F(\tc_n) ) \gg n,$$
which shows that $\gamma > 0$. Thus we have a non-degenerate central limit theorem for the logarithm of the number of independent vertices. Let us formulate this as a theorem:

\begin{theorem}
Let $\tc_n$ be a conditioned Galton-Watson tree of order $n$ with offspring distribution $\xi$, where $\xi$ satisfies $\E \xi = 1$ and $0<\sigma^2:=\mathrm{Var} \xi <\infty$. There exist constants $\mu > 0$ and $\gamma > 0$ (both depending on $\xi$) such that
$$\frac{\log I (\tc_n) - n\mu}{\sqrt n} \dto \nc(0,\gamma^2)$$
as $n \to \infty$.
\end{theorem}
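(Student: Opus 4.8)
The plan is to apply Theorem~\ref{thm:normality} to the additive functional $F(T)=\log I(T)$, whose toll function is $f(T)=-\log\rho(T)$ with $\rho(T)=I_0(T)/I(T)$, taking $\alpha=0$. Condition \eqref{eq:thm1-1} holds because $0\le f(T)\le\log 2$: indeed $I_0(T)\le I(T)$ and, by \eqref{eq:ind2}, $I(T)=I_0(T)+\prod_i I_0(T_i)\le 2I_0(T)$. The moment hypothesis $\E\xi^{2\alpha+1}=\E\xi=1<\infty$ is automatic. The crux of the verification is the approximation bound $|f(T)-f(T^{(M)})|\le(\log 2)2^{-M}w_M(T)$, which I would obtain as in \eqref{eq:ind_ftau}--\eqref{eq:ind_fwM}: introduce $\rho^M_{\inf}(T),\rho^M_{\sup}(T)$, the extreme values of $\rho(S)$ over $S$ with $S^{(M)}=T^{(M)}$, so that $|f(T)-f(T^{(M)})|\le\tau^M(T):=\log(\rho^M_{\sup}(T)/\rho^M_{\inf}(T))$; then unfold the recursion \eqref{eq:ind_recursion} one level and apply Jensen's inequality to $x\mapsto-\log x$, together with $\prod_i\rho^{M-1}_{\sup}(T_i)\le 1$, to get the contraction $\tau^M(T)\le\tfrac12\sum_i\tau^{M-1}(T_i)$; finally iterate $M$ times down to level $M$ and use the trivial bound $\tau^0\le\log 2$.

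From this inequality and Lemma~\ref{lem:Aux1} the remaining hypotheses follow easily. For \eqref{eq:thm1-2}, conditioning on $\htc^{(M)}$ and using $w_M(\htc^{(N)})=w_M(\htc)$ together with $\E w_M(\htc)=O(M)$ gives a bound $\ll M2^{-M}$ uniformly for $N\ge M$; for \eqref{eq:thm1-3} the same computation with $\E w_M(\tc_n)=O(M)$ (uniform in $n$) gives $\ll M2^{-M}$. Hence one may take $p_M$ with $p_M\ll M2^{-M}$, and choosing $M_n=\lceil C\log n\rceil$ with $C$ a sufficiently large constant makes $a_n=n^{-1/2}(n\,p_{M_n}+M_n^2)\ll n^{1/2-C\log 2}\log n+n^{-1/2}(\log n)^2$, which both tends to $0$ and is $\sum_n a_n/n$-summable once $C>1/(2\log 2)$, so \eqref{eq:thm1-4} holds. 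Theorem~\ref{thm:normality} then gives $(\log I(\tc_n)-n\mu)/\sqrt n\dto\nc(0,\gamma^2)$ with $\mu=\E f(\tc)$ and some $\gamma\ge 0$, and by Proposition~\ref{prop:mean} the centering $n\mu$ may be replaced by $\E\log I(\tc_n)$.

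It remains to establish the strict positivity $\mu>0$ and $\gamma>0$. Positivity of $\mu$ is immediate: $f(\tc)\ge 0$ always, and $f(\tc)=-\log\rho(\tc)>0$ whenever $\deg(\tc)\ge 1$ by \eqref{eq:ind3}; since $\E\xi=1$ forces $\Prob(\xi\ge 1)>0$, we get $\mu=\E f(\tc)>0$. Positivity of $\gamma$ is the main obstacle, and I would argue as sketched above the statement. First, exhibit two trees $S_1,S_2$ with $|S_1|=|S_2|$, both of positive probability $\Prob(\tc=S_i)>0$, satisfying $I(S_1)>I(S_2)$ and $I_0(S_1)>I_0(S_2)$ --- for a positive outdegree $d$ (so $\Prob(\xi=d)>0$), take $S_1$ the complete $d$-ary tree of height $3$ and $S_2$ a $d$-ary caterpillar on the same number of vertices, and verify both strict inequalities directly. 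The key combinatorial fact is that if $S$ is a fringe subtree of $T$ then $I(T)=A\,I(S)+B\,I_0(S)$ for nonnegative integers $A,B$ depending only on $T$ outside $S$; consequently replacing an occurrence of $S_2$ by $S_1$ multiplies $I(T)$ by a factor in $[\eta,\eta']$ with $\eta:=\min\{I(S_1)/I(S_2),\,I_0(S_1)/I_0(S_2)\}>1$, independently of context.

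Then contract $\tc_n$ to a tree $\tc_n^*$ by replacing each fringe occurrence of $S_1$ or $S_2$ by a marked leaf; conditionally on $\tc_n^*$ having $m$ marked leaves, the corresponding subtrees are i.i.d., equal to $S_1$ or $S_2$ with fixed probabilities $p,1-p\in(0,1)$, and an iterated application of the law of total variance --- each replaced subtree shifting $\log I(\tc_n)$ by at least $\log\eta>0$ regardless of the rest --- yields $\Var(\log I(\tc_n)\mid\tc_n^*)\gg m$ uniformly in the shape of $\tc_n^*$. Taking expectations gives $\Var\log I(\tc_n)\gg\E F_{S_1}(\tc_n)+\E F_{S_2}(\tc_n)\gg n$, since $\E F_{S_i}(\tc_n)$ is asymptotically linear in $n$ with positive leading coefficient. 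Finally, since $n^{-1}\Var\log I(\tc_n)\to\gamma^2$ (which follows from the Minkowski-type estimates in the proof of Theorem~\ref{thm:normality} together with Lemma~\ref{lem:variance}), we conclude $\gamma^2>0$. The points I expect to cost the most care are the explicit choice of $S_1,S_2$ with the two strict inequalities, and making the conditional law-of-total-variance argument fully rigorous with uniformity over the shape of $\tc_n^*$.
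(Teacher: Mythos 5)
Your proposal is correct and follows essentially the same route as the paper: the same toll function $f=-\log\rho$ with $\alpha=0$, the same contraction $\tau^M(T)\le\tfrac12\sum_i\tau^{M-1}(T_i)$ via Jensen leading to $|f(T)-f(T^{(M)})|\le(\log 2)2^{-M}w_M(T)$, the same choice $p_M\ll M2^{-M}$, $M_n\ll\log n$, and the same $S_1$/$S_2$ replacement argument with the law of total variance for $\gamma>0$. The only (harmless) deviations are that you spell out the admissible constant in $M_n=\lceil C\log n\rceil$ and note $\mu>0$ explicitly (in fact $f(T)=\log(1+\prod_i\rho(T_i))>0$ for every tree, including the single node, so no case distinction on $\deg(\tc)$ is needed).
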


\subsection{The number of matchings}

The number of matchings in random trees has been studied previously, and means and variances have been determined for different classes of trees \cite{K97,W07a,W07b}. Just like in the previous example, in order to obtain a limiting distribution, we will consider the logarithm of this quantity. The proof is very similar to the previous example. For a rooted tree $T$, let $m(T)$ be the total number of matchings of  $T$ and $m_0(T)$ be the number of matchings of $T$ that do not cover the root (by this, we mean matchings that do not contain an edge incident to the root). Using similar arguments as for the number of independent sets, one finds that these functionals satisfy the following recursive formulas (including the case when $T$ consists of a single node):
\begin{align}
m_0(T) & = \prod_{i}m(T_i), \label{eq:match1}\\
m(T) & = m_0(T)+\sum_{i} m_0(T_i) \prod_{j\neq i}m(T_j).\label{eq:match2}
\end{align}
Defining an additive functional $F(T) := \log m(T)$, we observe from \eqref{eq:match1} and \eqref{eq:match2} that the associated toll function is
\begin{equation}\label{eq:matchf}
f(T)=F(T) - \sum_i F(T_i) = \log m(T) - \sum_i \log m(T_i) = - \log\left(\frac{m_0(T)}{m(T)}\right).
\end{equation}
We define $\rho(T) :=\frac{m_0(T)}{m(T)}$, which, by \eqref{eq:match1} and \eqref{eq:match2}, also satisfies a recursion, namely
\begin{equation}\label{eq:match3}
\rho(T)=\frac{1}{1+\sum_{i}\rho(T_i)}.
\end{equation}
From \eqref{eq:matchf} it follows that $f(T) = - \log \rho(T)$, which, in view of \eqref{eq:match3}, implies that $0\leq f(T)\leq \log(1+\deg(T))$. Hence, condition  \eqref{eq:thm1-1} of Theorem~\ref{thm:normality} is satisfied by $f$ with $\alpha=1$, say. 

To estimate the distance between $T$ and $T^{(M)}$, we define
\[
\rho^M_{\inf}(T) := \inf \{\rho(S) : S^{(M)} = T^{(M)}\}, \qquad \rho^M_{\sup}(T) := \sup \{\rho(S) : S^{(M)} = T^{(M)}\},
\]
so that
\begin{equation}\label{eq:ftau}
|f(T)-f(T\po)|\le \frac{\rho^M_{\sup}(T)}{\rho^M_{\inf}(T)} =: \tau^{M}(T).
\end{equation}
The functionals $\rho^M_{\inf}(T)$ and $\rho^M_{\sup}(T)$, $M = 0,1, 2, \dots$ satisfy recursions
\begin{equation}\label{eq:match_recursion}
 \rho^M_{\sup}(T)=\frac{1}{1+\sum_{i}\rho^{M-1}_{\inf}(T_i)} \, \text{ and }\, \rho^M_{\inf}(T)=\frac{1}{1+\sum_{i}\rho^{M-1}_{\sup}(T_i)}.
\end{equation}
 Using \eqref{eq:match_recursion}, and denoting $\rho_i = \rho_{\inf}^{M-1}(T_i)$, we get
\[
\tau^M(T)  = -\log\left(\frac{1+\sum_{i}\rho^{M-1}_{\inf}(T_i)}{1+\sum_{i}\rho^{M-1}_{\sup}(T_i)}\right) 
 = -\log\left(\frac{1+\sum_{i}\rho_i\exp(-\tau^{M-1}(T_i))}{1+\sum_{i}\rho_i}\right).
\]
Since the argument of the logarithm on the right side is a convex combination of expressions $\exp(-\tau^{M-1}(T_i))$, $i = 1, 2, \dots$, applying Jensen's inequality to the convex function $x \mapsto -\log x$ yields, for $T$ with $\deg(T) \ge 1$,
\begin{equation}\label{eq:taurecursion}
\tau^{M}(T) 
 \leq \frac{1}{1+\sum_i \rho_i}\sum_i\rho_i\tau^{M-1}(T_i)\\[1em]
 \leq \frac{\max_i \rho_i}{1+\max_i \rho_i}\sum_{i}\tau^{M-1}(T_i)
\leq \frac{1}{2}\sum_{i}\tau^{M-1}(T_i).
\end{equation}
But for $M \ge 1$, inequality $\tau^M(T) \le \frac{1}{2}\sum_i \tau^{M-1}(T_i)$ is also satisfied for the only tree $T$ with $\deg(T) = 0$.
Unlike the previous example involving the number of independent sets, $\tau^0$ is not bounded by a constant, but the situation is saved by bounding $\tau^1$ by the root degree instead. From \eqref{eq:match_recursion} it is clear that $\rho^1_{\sup}(T)=1$ and $\rho^1_{\inf}(T)=(1+\deg(T))^{-1}$ for every $T$. Therefore 
\begin{equation}\label{eq:taudeg}
\tau^1(T) = \log (1 + \deg(T)) \le \deg(T). 
\end{equation}
Let  $v_1,\, v_2,\,\dots,\, v_{w_{M-1}(T)}$ be the nodes at level $M - 1$ of $T$. By iterating \eqref{eq:taurecursion} $M - 1$ times and applying \eqref{eq:taudeg}, we obtain
\begin{equation}\label{eq:match5}
\tau^{M}(T)\leq 2^{-(M - 1)} \sum_{i=1}^{w_{M-1}(T)}\tau^1(T_{v_i})
\leq 2^{-(M - 1)} \sum_{i=1}^{w_{M-1}(T)} \deg(T_{v_i}) \le 2^{-(M-1)} w_M(T).
\end{equation}
Combining \eqref {eq:ftau} and \eqref{eq:match5}, we obtain
\begin{equation}\label{eq:match6}
|f(T)-f(T\po)|\leq 2^{-M + 1 }w_{M}(T).
\end{equation}
Since \eqref{eq:match6} differs from the corresponding inequality \eqref{eq:ind_fwM} for the number of independent sets just by a constant factor, checking of the conditions \eqref{eq:thm1-2} and \eqref{eq:thm1-3} works precisely in the same way and shows that the conditions of Theorem~\ref{thm:normality} are again satisfied for some choice of $p_M, M_n$ satisfying $p_M \ll M2^{-M}$  and $M_n \ll \log n$.   
%This is essentially enough to show that the remaining conditions of Theorem~\ref{thm:normality} are satisfied by our toll function. Let us first check \eqref{eq:thm1-2}. Note that for any $N\geq M$, we have 
%\[
%\E \left|  f(\hat \tc^{(M)})-\E\left(f(\hat \tc^{(N)})\,  | \, \hat \tc^{(M)}\right)\right|\leq \E \left(\E \left(|f(\htc\po)-f(\htc^{(N)})|\, \Big| \, \htc\po\right)\right).
%\]
%Using \eqref{eq:match6}, we deduce that for any $N\geq M$,
%\[
%\E \left(|f(\htc\po)-f(\htc^{(N)})|\, \Big| \, \htc\po\right)\leq 2^{-M+1}\E \left(w_M(\htc^{(N)})\, \Big| \, \htc\po\right).
%\]
%By taking the expectations, and using $w_M(\htc^{(N)}) = w_M(\htc)$ as well as the estimate $\E w_M(\htc)=O(M)$ (see \eqref{eq:prem1}), we get 
%\begin{equation}\label{eq:match7}
%\E \left|  f(\hat \tc^{(M)})-\E\left(f(\hat \tc^{(N)})\,  | \, \hat \tc^{(M)}\right)\right|\ll  M\, 2^{-M}.
%\end{equation}
%To check \eqref{eq:thm1-3} we use \eqref{eq:match6} and $\E w_M(\tc_n)=O(M)$ (see \eqref{eq:prem1}) and get
%\begin{equation}\label{eq:match8}
%\E |f(\tc_n)-f(\tc_n\po)| =  \E \left(\E \left(|f(\tc_n)-f(\tc_n\po)| \, \Big|\, \tc_n\po \right)\right) \le \E \left( 2^{-M+1} w_M(\tc_n) \right) \ll M 2^{-M},
%\end{equation}
%where the implied constant is independent of $n$. To sum up, \eqref{eq:match7} and \eqref{eq:match8} 

We conclude again with a formal theorem:

\begin{theorem}
Let $\tc_n$ be a conditioned Galton-Watson tree of order $n$ with offspring distribution $\xi$, where $\xi$ satisfies $\E \xi = 1$ and $0<\sigma^2:=\mathrm{Var} \xi <\infty$ as well as $\E \xi^3 < \infty$. There exist constants $\mu > 0$ and $\gamma > 0$ (both depending on $\xi$) such that
$$\frac{\log m (\tc_n) - n\mu}{\sqrt n} \dto \nc(0,\gamma^2)$$
as $n \to \infty$.
\end{theorem}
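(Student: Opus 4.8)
The plan is to reduce everything to Theorem~\ref{thm:normality} and then verify that the resulting Gaussian is genuinely non-degenerate, i.e.\ that $\mu>0$ and $\gamma>0$. The groundwork laid just above the statement already does the hard part of checking the hypotheses: the toll function $f(T)=-\log\rho(T)$ of $F(T)=\log m(T)$ satisfies \eqref{eq:thm1-1} with $\alpha=1$ (because $0\le f(T)\le\log(1+\deg T)\le\deg T$), and \eqref{eq:thm1-2} and \eqref{eq:thm1-3} hold with $p_M\ll M2^{-M}$ and $M_n\ll\log n$, which forces \eqref{eq:thm1-4}; moreover $\E\xi^3=\E\xi^{2\alpha+1}<\infty$ is exactly the moment condition required for $\alpha=1$. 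So I would first invoke Theorem~\ref{thm:normality} together with \eqref{eq:meanF} to get $\bigl(\log m(\tc_n)-n\mu\bigr)/\sqrt n\dto\nc(0,\gamma^2)$ with $\mu=\E f(\tc)=-\E\log\rho(\tc)$ and some $0\le\gamma<\infty$. Positivity of $\mu$ is then immediate: by \eqref{eq:match3}, $\rho(T)=(1+\sum_i\rho(T_i))^{-1}<1$, hence $f(T)=-\log\rho(T)>0$, whenever $\deg(T)\ge1$, while $f\ge0$ always; since $\E\xi=1$ and $\Var\xi>0$ force $\Prob(\deg(\tc)\ge1)=\Prob(\xi\ge1)>0$, we conclude $\mu=\E f(\tc)>0$.

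For positivity of $\gamma$ I would reuse, essentially verbatim, the variance-decomposition argument from Subsection~\ref{sub:ind}; the one input that must be re-derived for matchings is the local-replacement estimate. If $S$ is a fringe subtree of $T$ rooted at $r$, $T'$ is obtained from $T$ by deleting $S$, and $v$ is the parent of $r$ in $T'$, then splitting the matchings of $T$ according to whether the edge $vr$ is used gives $m(T)=A\,m(S)+B\,m_0(S)$, where $A$ is the number of matchings of $T'$ (so $A\ge1$) and $B$ the number of those not covering $v$. Hence, if $S_1,S_2$ are two trees of the same order, each of positive probability, with $m(S_1)>m(S_2)$ and $m_0(S_1)>m_0(S_2)$, then replacing an occurrence of $S_2$ by $S_1$ multiplies $m(T)$ by a factor at least $\eta:=\min\{m(S_1)/m(S_2),\,m_0(S_1)/m_0(S_2)\}>1$, irrespective of the rest of $T$, i.e.\ increases $\log m(T)$ by at least $\log\eta$. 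From here the argument is identical to the one for independent sets: mark every fringe occurrence of $S_1$ or $S_2$ in $\tc_n$ by a leaf, condition on the resulting tree $\tc_n^*$ (given which the restored subtrees are i.i.d.\ with a fixed nondegenerate distribution, using $|S_1|=|S_2|$), apply the law of total variance iteratively to obtain $\Var(\log m(\tc_n)\mid\tc_n^*)\gg m$ with $m$ the number of marked leaves, and once more to obtain $\Var(\log m(\tc_n))\gg\E F_{S_1}(\tc_n)+\E F_{S_2}(\tc_n)\gg n$, the last step using that a fringe-subtree count has mean asymptotically linear in $n$ with positive leading constant (cf.\ \cite[(1.10)]{J16}). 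Since $n^{-1}\Var(\log m(\tc_n))\to\gamma^2$, this yields $\gamma>0$.

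The main obstacle — indeed the only genuinely new computation — is exhibiting a suitable pair $S_1,S_2$. I would take them as in Subsection~\ref{sub:ind}: a complete $d$-ary tree of height $3$ and a $d$-ary caterpillar of the same order, for some $d\ge2$ in the support of $\xi$ (such $d$ exists, since $\E\xi=1$ together with $\Var\xi>0$ rules out $\Prob(\xi=1)=1$). Both have positive probability, and one then checks directly that $m$ and $m_0$ separate these two trees strictly; after relabelling so that $S_1$ is the larger one in both quantities, the argument above applies. (Unlike the independent-set case, where ``bushier'' trees have more independent sets, here the Hosoya-type heuristic suggests the caterpillar is the larger of the two, so the direction of the inequalities is the point that requires the explicit verification; one could equally well substitute any other pair of equal-order, positive-probability trees for which $m$ and $m_0$ are strictly ordered the same way.)
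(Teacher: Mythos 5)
Your proposal is correct and follows essentially the same route as the paper: verify the hypotheses of Theorem~\ref{thm:normality} with $\alpha=1$ via the bound $|f(T)-f(T^{(M)})|\le 2^{-M+1}w_M(T)$, then transfer the non-degeneracy argument of Subsection~\ref{sub:ind} to matchings. In fact you supply slightly more detail than the paper does here (the replacement identity $m(T)=A\,m(S)+B\,m_0(S)$ and the explicit observation that $\mu=\E f(\tc)>0$ since $f\ge 0$ and $f>0$ on trees of positive root degree), while leaving the same finite check (that $m$ and $m_0$ strictly separate a concrete pair $S_1,S_2$ of equal order in the same direction) at the same level of "one verifies directly" as the paper itself.
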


\subsection{The number of dominating sets} Recall that a dominating set $D \subseteq V(T)$ is a set of nodes so that every node of the tree is either in $D$ or is has a neighbour in $D$. Let $d(T)$ be the number of dominating sets in $T$. Extreme values of $d$ in trees were studied by Br\'od and Skupie\'n \cite{BS06}.

Applying Theorem~\ref{thm:normality} to the number of dominating sets $d(T)$ is more complicated than the cases of independent sets and matchings. We consider two auxiliary parameters, defining $d_0(T)$ to be the number of dominating sets not containing the root and $d_*(T)$ to be the number of sets dominating everything except for the root (in particular such a set contains neither the root nor a child of the root). A bit of consideration reveals that the following recursive formulas are satisfied for any tree $T$ with root branches $T_1, \dots, T_{\deg(T)}$, the reasoning being similar to the previous two examples: 
\begin{align*}
d_*(T) & =\prod_id_0(T_i), \\
d_0(T) & =\prod_id(T_i)-\prod_id_0(T_i), \\
d(T) & = d_0(T) + \prod_i\left(d(T_i)+d_*(T_i)\right).
\end{align*} 
Considering an additive functional $F(T)=\log d(T)$, we get that the corresponding toll function is 
\begin{equation}\label{eq:domf}
	f(T)= F(T) - \sum_i F(T_i) = \log \frac{d(T)}{\prod_i d(T_i)}	= \log \frac{d(T)}{d_0(T) + \prod_i d_0(T_i)} = -\log \frac{d_0(T)+d_*(T)}{d(T)}.
\end{equation}
Defining
\[
	\rho_*(T):= \frac{d_*(T)}{d(T)} \quad \text{and} \quad \rho_0(T) := \frac{d_0(T)}{d(T)},
\]
from \eqref{eq:domf} we obtain
\begin{equation}\label{eq:dom_frho}
		f(T) = -\log \Big(\rho_*(T)+\rho_0(T)\Big).
\end{equation}
It is easy to see that $0\leq \rho_*(T)\leq 1$ and $0\leq \rho_0(T)\leq \frac{1}{2}$. This implies $f(T) \ge - \log (3/2)$. On the other hand, using the recursive formulas for $d_0$ and $d$ as well as $\rho_* \le 1$, we get
\begin{multline}\label{eq:dom_fdeg}
f(T) = \log \frac{d(T)}{\prod_i d(T_i)} = \log \frac{\prod_i d(T_i) - \prod_i d_0(T_i) + \prod_i \left(d(T_i)+d_*(T_i)\right)}{\prod_i d(T_i)} \\
\le
 \log \left(1 + \prod_i \left(1 + \rho_*(T_i)\right)\right)
\le \log \left(1 + 2^{\deg(T)}\right) \le \log 2 \, \deg(T) + 1.
\end{multline}
Hence condition \eqref{eq:thm1-1} is satisfied with $\alpha = 1$.

Preparing to estimate the distance between $f(T)$ and $f(T^{(M)})$, we first note that the functionals $\rho_*$ and $\rho_0$ satisfy the recursions
\begin{equation}\label{eq:rhostarrec}
  \rho_*(T) =  \frac{\prod_i\rho_0(T_i)}{1 - \prod_i\rho_0(T_i) + \prod_i \left( 1 +\rho_*(T_i) \right)}, %= \frac{1 + \prod_i \rho_*(T_i)}{1 - \prod_i\rho_0(T_i) + \prod_i \rho_*(T_i)},
\end{equation}
\begin{equation}\label{eq:rho0rec}
  \rho_0(T) = \frac{1 - \prod_i \rho_0(T_i)}{1 - \prod_i \rho_0(T_i) + \prod_i \left( 1 +\rho_*(T_i) \right)}.
\end{equation}
We further define
\begin{equation}\label{eq:dom_rhoM0}
	\rho^M_{0,\inf}(T) := \inf \{\rho_0(S) : S^{(M)} = T^{(M)}\}, \qquad \rho^M_{0,\sup}(T) := \sup \{\rho_0(S) : S^{(M)} = T^{(M)}\},
\end{equation}
\begin{equation}\label{eq:dom_rhoMstar}
	\rho^M_{*,\inf}(T) := \inf \{\rho_*(S) : S^{(M)} = T^{(M)}\}, \qquad \rho^M_{*,\sup}(T) := \sup \{\rho_*(S) : S^{(M)} = T^{(M)}\}.
\end{equation}
Let
\[
\tau_0^M(T) := \log \frac{\rho_{0,\sup}^M(T)}{\rho_{0,\inf}^M(T)},
\qquad
\tau_*^M(T) = \log \frac{\rho_{*,\sup}^M(T)}{\rho_{*,\inf}^M(T)},
\]
whenever the denominator is nonzero. If it is zero, then let corresponding $\tau$ be $0$ if the numerator also equals 0 and $\infty$ if the numerator is positive. Alternatively we can use definitions without case distinction:
\[
	\tau_0^M(T) = \inf \left\{ t \ge 0 : \rho_{0,\sup}^M(T) \le e^t \rho_{0,\inf}^M(T) \right\}, \quad \tau_*^M(T) = \inf \left\{ t \ge 0 : \rho_{*,\sup}^M(T) \le e^t \rho_{*,\inf}^M(T) \right\}
\]
with the convention $\inf \emptyset = \infty$.

We further obtain a bound for $\tau_0^1$ in terms of $\deg(T)$. If $\deg(T) = 0$, that is $T = \bullet$ is the tree with a single node, then it is easy to see from the definitions that $\tau_0^1(T) = 0$.
%Note that $\rho_0(T) = 0$ iff $\deg(T) = 0$, so $\tau_0^1(\bullet) = 0$, where $\bullet$ denotes the tree with a single node. 
%Moreover we have $\rho_*(\bullet) = 1$ and hence $\tau_*^1(\bullet) = 0$.
If $\deg(T) \ge 1$, then recursion \eqref{eq:rho0rec} and the inequalities $\rho_0 \le 1/2$ and $\rho_* \le 1$ imply
\begin{equation}\label{eq:rho0lower}
	\rho_0(T)\ge \frac{1 - 2^{-\deg(T)}}{1 - 2^{-\deg(T)} + 2^{\deg(T)}} \ge \frac{1/2}{1/2 + 2^{\deg(T)}}
\end{equation}
whence $\tau_0^1(T) \le \log \left( 1/2 + 2^{\deg(T)} \right) \ll \deg(T)$.
We conclude that for all trees $T$, we have
\begin{equation}\label{eq:tau0w}
	\tau_0^1(T) \ll \deg(T) = w_1(T).
\end{equation}
Unfortunately, it is not possible to bound $\tau_*^1(T)$, since whenever $T$ has a leaf of depth one, it cannot be dominated without including the root and therefore $d_*(T) = 0$. On the other hand, any other tree has $d_*(T) > 0$ whence $\tau_*^1(T) = \infty$. We can, however, bound $\tau_*^2(T)$. If $T$ has a leaf at depth one, then $d_{*,\sup}^2(T) = d_{*\inf}^2(T) = 0$ and hence $\tau_*^2(T) = 0$, so further assuming that no node of depth one is a leaf, we obtain from \eqref{eq:rhostarrec} and \eqref{eq:rho0lower} (using $\rho_{*,\sup}^2(T) \le 1$) that
\begin{equation}\label{eq:taustarw}
	\tau_*^2(T) = \log \frac{\rho_{*,\sup}^2(T)}{\rho_{*,\inf}^2(T)} \le \log \frac{1 + 2^{\deg(T)}}{\prod_i 1/(1 + 2^{\deg(T_i)+1})} \ll \deg(T) + \sum_i \deg(T_i) = w_1(T) + w_2(T).
\end{equation}

Our goal is now to show the following. 
\begin{lemma}\label{lem:etarec}
	For some constants $a > 1$ and $0 < c < 1$, the functional $\eta^M(T) := a\tau_0^M(T) + \tau_*^M(T)$ satisfies, for $M \ge 3$,
	\begin{equation*}
		\eta^M(T) \le 
		\begin{cases}
			c \sum_{v \in V_1(T)} \eta^{M-1}(T_v), \quad &\deg(T) \ge 2, \\
			c^2 \sum_{v \in V_2(T)} \eta^{M-2}(T_v), \quad &\deg(T) \le 1,
		\end{cases}
	\end{equation*}
	where $V_i(T)$ are the nodes of $T$ at depth (i.e., distance from the root) $i$.
\end{lemma}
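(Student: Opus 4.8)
The plan is to convert the recursions \eqref{eq:rhostarrec}--\eqref{eq:rho0rec} for $\rho_0$ and $\rho_*$ into recursions for the relative uncertainties $\tau_0^M$, $\tau_*^M$, and then to package these into a single contracting recursion for $\eta^M=a\tau_0^M+\tau_*^M$. As preparation I would record the a priori bounds that control all denominators occurring below: from the recursions one has $0\le\rho_0(T)\le 1/2$ for every $T$, and $\rho_*(T)\le\prod_i\rho_0(T_i)\le 2^{-\deg(T)}$ whenever $\deg(T)\ge 1$ (the lower bound \eqref{eq:rho0lower} is also available). Writing $P:=\prod_i\rho_0(T_i)$ and $Q:=\prod_i(1+\rho_*(T_i))$, we thus always have $Q\ge1$ and $P\le 2^{-\deg(T)}$, so that $1-P\ge 3/4$ — crucially — as soon as $\deg(T)\ge2$; this surplus factor of $1/4$ is what produces a genuine contraction in the branching step.

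Next comes the monotonicity step. In \eqref{eq:rhostarrec}--\eqref{eq:rho0rec}, $\rho_0(T)$ is strictly decreasing in each $\rho_0(T_i)$ and each $\rho_*(T_i)$, whereas $\rho_*(T)$ is increasing in each $\rho_0(T_i)$ and decreasing in each $\rho_*(T_i)$. Relaxing the constraint that $\rho_0(S_i)$ and $\rho_*(S_i)$ must come from the same branch $S_i$ (legitimate because we only want upper bounds on the suprema $\rho^M_{0,\sup},\rho^M_{*,\sup}$ and lower bounds on the infima $\rho^M_{0,\inf},\rho^M_{*,\inf}$), one reads off closed-form bounds for $\rho^M_{0,\sup}/\rho^M_{0,\inf}$ and $\rho^M_{*,\sup}/\rho^M_{*,\inf}$ in terms of $\{\tau_0^{M-1}(T_i),\tau_*^{M-1}(T_i)\}_i$; taking logarithms, using $\log(1+t)\le t$, and keeping ratios in multiplicative form where a first-order estimate would lose too much, these become linear estimates of the shape $\tau_0^M(T),\tau_*^M(T)\le A\sum_i\tau_0^{M-1}(T_i)+B\sum_i\tau_*^{M-1}(T_i)$. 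The decisive smallness of the coefficients comes from two sources: $1/(1-\sup P)\le 4/3$ when $\deg(T)\ge2$, and each $\sup\rho_*(T_i)\le 2^{-\deg(T_i)}$ (a branch equal to $\bullet$ carrying no uncertainty), so that the uncertainty transported through $Q$ is at most $\tfrac12\sum_i\tau_*^{M-1}(T_i)$.

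When $\deg(T)\le1$ a single level does not contract, because for $\deg(T)=1$ the product $P=\rho_0(T_1)$ is only bounded by $1/2$ and $1-P$ is not small; here I would compose two levels directly. Setting $P_1:=\prod_{w\in V_2(T)}\rho_0(T_{1,w})$ and $Q_1:=\prod_{w\in V_2(T)}(1+\rho_*(T_{1,w}))$, substituting the recursions for $T_1$ into those for $T$ yields the clean identities
\[
\rho_0(T)=\frac{Q_1}{1+2Q_1},\qquad \rho_*(T)=\frac{1-P_1}{1+2Q_1}.
\]
In particular $\rho_0(T)\in[1/3,1/2)$ and it depends on $Q_1$ alone, with $\bigl|\tfrac{d}{d\log Q_1}\log\rho_0(T)\bigr|=1/(1+2Q_1)\le1/3$. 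Since the log-range of $Q_1$ is at most $\tfrac12\sum_{w\in V_2(T)}\tau_*^{M-2}(T_{1,w})$ and that of $P_1$ is $\sum_{w\in V_2(T)}\tau_0^{M-2}(T_{1,w})$, one obtains $\tau_0^M(T)\le\tfrac16\sum_{w\in V_2(T)}\tau_*^{M-2}(T_{1,w})$ together with the (cruder but sufficient) $\tau_*^M(T)\le\sum_{w\in V_2(T)}\tau_0^{M-2}(T_{1,w})+\tfrac12\sum_{w\in V_2(T)}\tau_*^{M-2}(T_{1,w})$. The case $\deg(T)=0$ is trivial since $\eta^M(\bullet)=0$.

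Feeding these linear estimates into $\eta^M=a\tau_0^M+\tau_*^M$ reduces the lemma to picking $a>1$ and $c<1$ satisfying two small systems of numerical inequalities, one per case; for instance $a=7/5$, $c=\sqrt3/2$ satisfy the ones produced by the two-level step (namely $1\le c^2a$ and $\tfrac a6+\tfrac12\le c^2$). The role of the weight $a>1$ is that the two uncertainties feed into each other asymmetrically: the $\tau_*$-uncertainty of deep nodes contracts sharply as it re-emerges as the $\tau_0$-uncertainty of a shallow node, while the $\tau_0$-uncertainty of deep nodes passes almost undiminished into the $\tau_*$-uncertainty of the shallow node, so only a weighted combination with $a$ in the correct window is uniformly contracting. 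The step I expect to be the main obstacle is the branching case $\deg(T)\ge2$: the estimates for $\tau_0^M(T)$ and $\tau_*^M(T)$ must be sharp enough that the coefficients of $\sum_i\tau_*^{M-1}(T_i)$ and $\sum_i\tau_0^{M-1}(T_i)$ in $\eta^M(T)$ stay below $c$ and $ca$ respectively, which the naive bounds miss; getting there seems to require exploiting the smallness of $\sup P$ and of each $\sup\rho_*(T_i)$ very carefully, treating low- and high-degree branches separately, and very likely running the whole proof as an induction on $M$ so that the lemma itself can be used to bound the incoming uncertainties $\tau^{M-1}(T_i)$ coming from low-degree branches.
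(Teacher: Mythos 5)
Your overall strategy coincides with the paper's: exploit the monotonicity of \eqref{eq:rhostarrec}--\eqref{eq:rho0rec} to bound the sup/inf ratios, convert these via Jensen's inequality (for $x\mapsto-\log x$) into linear estimates in $\Sigma_0=\sum_i\tau_0(T_i)$ and $\Sigma_*=\sum_i\tau_*(T_i)$, handle $\deg(T)\le 1$ by composing two levels, and close with a weighted combination $\eta=a\tau_0+\tau_*$. Your treatment of the case $\deg(T)\le 1$ is correct and in fact slightly sharper than the paper's (you keep the factor $1/(1+2Q_1)\le 1/3$, which the paper discards), and your identification of why the weight $a>1$ is needed is exactly right.

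The gap is the case $\deg(T)\ge 2$, which you explicitly leave open and speculate may require an induction on $M$ and a stratification by branch degrees. Neither is needed, but the bounds you state would not close the argument as written: you quote the uncertainty transported through $Q=\prod_i(1+\rho_*(T_i))$ as at most $\tfrac12\Sigma_*$, and if the coefficient of $\Sigma_*$ in both $\tau_0^M(T)$ and $\tau_*^M(T)$ is $\tfrac12$, then the requirement $a\cdot\tfrac12+\tfrac12\le c<1$ forces $a<1$, contradicting $a>1$. The constant must be sharpened to $\tfrac13$ by using $\rho_{*,\sup}^{M-1}(T_i)\le\tfrac12$ for every branch with $\deg(T_i)\ge1$ (branches equal to $\bullet$ contribute nothing), so that $\rho_i/(1+\rho_i)\le\tfrac13$ in the Jensen step; this is the content of \eqref{eq:ASigma}. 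With that, a direct computation (no induction) gives $\tau_0^M(T)\le\tfrac13\Sigma_0+\tfrac13\Sigma_*$ using $\Pi=\prod_i\rho_{0,\sup}^{M-1}(T_i)\le\tfrac14$, and
\[
\tau_*^M(T)\le\Sigma_0+\tfrac13\Sigma_*+\log\frac{1-\Pi e^{-\Sigma_0}+A}{1-\Pi+A}\le\tfrac87\Sigma_0+\tfrac13\Sigma_*,
\]
where the last logarithm is bounded by $\tfrac17\Sigma_0$ using $A\ge1$ and $\Pi\le\tfrac14$. The point you are missing is that the coefficient of $\Sigma_0$ in the $\tau_*$-bound is allowed to exceed $1$ (it is $8/7$): the system $a\cdot\tfrac13+\tfrac87\le ca$ and $a\cdot\tfrac13+\tfrac13\le c$ is solvable precisely because $a$ can be chosen in the narrow window $[24/13,\,13/7]$ (the paper takes $a=13/7$, $c=20/21$ for this case and $c=\sqrt{20/21}$ overall). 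Until those two inequalities for $\deg(T)\ge2$ are actually derived and the window for $a$ is exhibited, the lemma is not proved.
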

Before we prove Lemma~\ref{lem:etarec}, let us show how it implies that the remaining conditions of Theorem~\ref{thm:normality} are satisfied.

By applying Lemma~\ref{lem:etarec} recursively to $T_v$ for nodes $v$ at depth $M-4$ or less, we are eventually left with a linear combination over nodes at depths $M-3$ and $M-2$, and hence we obtain a bound 
\begin{equation}
	\eta^M(T) \le c^{M-3} \sum_{v \in V_{M-3}(T)} \eta^{3}(T_v) + c^{M-2} \sum_{v \in V_{M-2}(T)} \eta^{2}(T).
\end{equation}
%(It would be enough to prove this for $a_0 = a_* = 1$, but the possibility of choosing $a_0 \neq a_*$ gives flexibility in the proof.)
In view of \eqref{eq:tau0w} and \eqref{eq:taustarw} we can bound $\eta^3(T_v)$ and $\eta^2(T_v)$ by the number of nodes of $T_v$ at depth $1$ or $2$ (possibly multiplied by a constant), so that we obtain 
\begin{equation}\label{eq:etafinal}
	\eta^M(T) \ll c^M (w_{M-2}(T) + w_{M-1}(T) + w_M(T))
\end{equation}
and since $\tau_0^M, \tau_*^M \le \eta^M$, using \eqref{eq:dom_frho} we obtain
\begin{equation}
	|f(T) - f(T^M)| \le \log \frac{
		\rho_{0,\inf}^M(T)e^{\tau_0^M(T)} + \rho_{*,\inf}^M(T)e^{\tau_*^M(T)}
	}
	{
		\rho_{0,\inf}^M(T) + \rho_{*,\inf}^M(T) 
	}
	\le \log e^{\eta^M(T)} = \eta^M(T),
\end{equation}
which together with \eqref{eq:etafinal} and the same bounds for the expectations of $w_k$ that have been used in the previous examples shows that conditions \eqref{eq:thm1-2} and \eqref{eq:thm1-3} are satisfied for a suitable choice of $p_M$ and $M_n$ with $p_M \ll Mc^M$ and $M_n \ll \log n$, so that \eqref{eq:thm1-4} is satisfied.
\begin{proof}[Proof of Lemma~\ref{lem:etarec}]
Before considering the two cases $\deg(T) \ge 2$ and $\deg(T) \le 1$, let us bound the ratio of $A_{\sup} := \prod_i \left( 1 + \rho_{*,\sup}^m(T_i) \right)$ and $A_{\inf} := \prod_i \left( 1 + \rho_{*,\inf}^m(T_i) \right)$, $m \ge 1$.

Denote $\rho_i = \rho_{*,\sup}^{m}(T_i)$, $\tau_i = \tau_*^{m}(T_i)$, and $\Sigma_* = \sum_i \tau_*^{m}(T_i)$. Since $\rho_{*,\sup}^{m}(T_i) = \rho_{*,\inf}^{m}(T_i)$ when $\deg(T_i) = 0$, we can further assume that $\deg(T_i) \ge 1$ for all $i$. Hence, in view of recursion \eqref{eq:rhostarrec} and the bound $\rho_0(T) \le 1/2$ we can assume $\rho_i \le 1/2$. By definition, $\rho_{*,\inf}^m(T_i) = \rho_i e^{-\tau_i}$, which gives
\begin{equation}\label{eq:ASigma}
		\log \frac{A_{\sup}}{A_{\inf}} = \sum_i \left(- \log \left( \frac{1  + \rho_ie^{-\tau_i}}{1 + \rho_i} \right) \right) \le \sum_i \frac{\rho_i}{1 + \rho_i}\tau_i  \le \frac{1}{3} \sum_i \tau_i = \frac{1}{3} \Sigma_*,
\end{equation}
	where the first inequality follows from Jensen's inequality applied to the convex function $x \mapsto -\log x$. %in the form $1 + \rho_i \le e^{\frac{\rho_i}{1 + \rho_i}\tau_i} + \rho_i e^{-\frac{1}{1 + \rho_i}\tau_i}$.

\emph{First case: $\deg(T) \ge 2$.}
	Since the right-hand side of \eqref{eq:rho0rec} is a decreasing function of each $\rho_0(T_i)$ and each $\rho_*(T_i)$, we have
	\begin{equation*}
		\tau_0^M(T) \le \log \left( \frac{1 - \prod_i \rho_{0,\sup}^{M-1}(T_i) + \prod_i \left( 1 + \rho_{*,\sup}^{M-1}(T_i) \right)}{1 - \prod_i \rho_{0,\inf}^{M-1}(T_i) + \prod_i \left( 1 + \rho_{*,\inf}^{M-1}(T_i) \right)} \cdot \frac{1 - \prod_i \rho_{0,\inf}^{M-1}(T_i)}{1 - \prod_i \rho_{0,\sup}^{M-1}(T_i)} \right)	
	\end{equation*}
	Writing $\Pi = \prod_i \rho_{0,\sup}^{M-1}(T_i)$ and $\Sigma_0 = \sum_i \tau_0^{M-1}(T_i)$ we have
	\begin{equation}\label{eq:tau0logs}
		\tau_0^M(T) \le \log \frac{\prod_i \left( 1 + \rho_{*,\sup}^{M-1}(T_i) \right)}{\prod_i \left( 1 + \rho_{*,\inf}^{M-1}(T_i) \right)} + \log \left( \frac{1 - \Pi e^{-\Sigma_0}}{1 - \Pi} \right).	
	\end{equation}
	The first term is at most $\Sigma_*/3$ by \eqref{eq:ASigma} with $m = M-1$.
	Turning to the second term in \eqref{eq:tau0logs}, and using $\Pi \le 1/4$ (since $\deg(T) \ge 2$), we get
	\begin{equation}\label{eq:logSigma0}
		\log \left( \frac{1 - \Pi e^{-\Sigma_0}}{1 - \Pi} \right) \le \log \left( \frac{1 - \frac{1}{4}e^{-\Sigma_0}}{1 - \frac{1}{4}} \right) \le \frac{1}{3}\Sigma_0, 
	\end{equation}
	where the inequality follows from Jensen's inequality: $1 \le \frac{3}{4}e^{\Sigma_0/3} +\frac{1}{4}e^{-\Sigma_0}$.

	Combining \eqref{eq:tau0logs}, \eqref{eq:ASigma} with $m = M-1$, and \eqref{eq:logSigma0} we obtain
	\begin{equation}
		\tau_0^M(T) \le \frac{1}{3} \sum_i \tau_0^{M-1}(T_i) + \frac{1}{3} \sum_i \tau_*^{M-1}(T_i).
		\label{eq:tau0deg2}
	\end{equation}
We now proceed to $\tau_*^M(T)$.
	Since the right-hand side of \eqref{eq:rhostarrec} is increasing in each $\rho_0(T_i)$ and decreasing in each $\rho_*(T_i)$, we have
	\begin{equation*}
		\tau_*^M(T) \le \log \left( \frac{\prod_i \rho_{0,\sup}^{M-1}(T_i)}{\prod_i \rho_{0,\inf}^{M-1}(T_i)} \cdot \frac{1 - \prod_i \rho_{0,\inf}^{M-1}(T_i) + \prod_i \left( 1 + \rho_{*,\sup}^{M-1}(T_i) \right)}{1 - \prod_i \rho_{0,\sup}^{M-1}(T_i) + \prod_i \left( 1 + \rho_{*,\inf}^{M-1}(T_i) \right)} \right)	
	\end{equation*}
	Keeping the notations $\Pi, A_{\sup}, A_{\inf}, \Sigma_0$ as above and noting that \eqref{eq:ASigma} implies $A_{\sup} = A_{\inf}e^{\Sigma_*/3}$, we have
	\begin{multline}\label{eq:taustarlogs}
		\tau_*^M(T) \le \log \left( \frac{\Pi}{\Pi e^{-\Sigma_0}} \right) + \log \left( \frac{1 - \Pi e^{-\Sigma_0} + A_{\sup}}{1 - \Pi + A_{\inf}} \right) 
		= \Sigma_0 + \log \left( \frac{1 - \Pi e^{-\Sigma_0} + A_{\inf}e^{\Sigma_*/3}}{1 - \Pi + A_{\inf}} \right) \\
		\le \Sigma_0 + \log \left( \frac{(1 - \Pi e^{-\Sigma_0} + A_{\inf})e^{\Sigma_*/3}}{1 - \Pi + A_{\inf}} \right) = \Sigma_0 + \frac{1}{3}\Sigma_* + \log \left( \frac{1 - \Pi e^{-\Sigma_0} + A_{\inf}}{1 - \Pi + A_{\inf}} \right).
	\end{multline}
	Further shortening $A = A_{\inf}$, we note that the argument of the last logarithm is decreasing in $A \ge 1$ and increasing in $\Pi \le 1/4$, hence we obtain
	\begin{equation*}
		\frac{1 - \Pi e^{-\Sigma_0} + A}{1 - \Pi + A} \le \frac{2 - \frac{1}{4}e^{-\Sigma_0}}{2 - \frac{1}{4}} = \frac{8 - e^{-\Sigma_0}}{7} \le e^{\Sigma_0/7} %= \frac{1- \Pi}{1 - \Pi + A} \cdot \frac{1 - \Pi e^{-\Sigma_0}}{1 - \Pi} + \frac{A}{1 - \Pi + A} \cdot e^{\Sigma_*/3} \le \sup \left\{ \frac{1 - \Pi e^{-\Sigma_0}}{1 - \Pi}, e^{\Sigma_*/3} \right\}
	\end{equation*}
	where the last inequality follows from Jensen's inequality: $\frac{7}{8}e^{\Sigma_0/7} + \frac{1}{8}e^{-\Sigma_0} \ge 1$.
	Putting the last estimate into \eqref{eq:taustarlogs}, we conclude
	\begin{equation*}
		\tau_*^M(T) \le \frac{8}{7} \Sigma_0 + \frac{1}{3}\Sigma_*.
	\end{equation*}
	Now combining this inequality with \eqref{eq:tau0deg2} and choosing $a = 13/7$, say, we obtain
	\begin{equation}\label{eq:etadeg2}
		\eta^M(T) \le \left(\frac{13}{7 \cdot 3}+ \frac{8}{7}\right)\Sigma_0 + \left(\frac{13}{7 \cdot 3} + \frac{1}{3}\right)\Sigma_* \le \frac{20\cdot 13}{21 \cdot 7}\Sigma_0 + \frac{20}{21}\Sigma_* = \frac{20}{21} \sum_i \eta^{M-1}(T_i).
	\end{equation}

	\emph{Second case: $\deg(T) \le 1$}. Since the case $\deg(T) = 0$ is trivial, let us further assume $\deg(T) = 1$. Let us write $T'$ for $T$ with the root removed and let us denote the branches of the root of $T'$ by $T_1, T_2, \dots$.
	Using \eqref{eq:rho0rec} and \eqref{eq:rhostarrec}, after some straightforward simplifications we obtain
	\begin{equation*}
		\rho_0(T) = \frac{1 - \rho_0(T')}{2 - \rho_0(T') + \rho_*(T')} = \frac{\prod_i \left( 1 + \rho_*(T_i) \right)}{1 + 2\prod_i \left( 1 + \rho_*(T_i) \right)}
	\end{equation*}
	and
	\begin{equation*}
		\rho_*(T) = \frac{\rho_0(T')}{2 - \rho_0(T') + \rho_*(T')} = \frac{1 - \prod_i \rho_0(T_i)}{1 + 2\prod_i \left( 1 + \rho_*(T_i) \right)}.
	\end{equation*}
	By obvious monotonicity properties, we obtain that
	\begin{equation}
		\tau_0^M(T) \le \log \left( \frac{\prod_i (1 + \rho_{*,\sup}^{M-2}(T_i))}{\prod_i (1 + \rho_{*,\inf}^{M-2}(T_i))} \cdot \frac{1 + 2\prod_i (1 + \rho_{*,\inf}^{M-2}(T_i))}{1 + 2\prod_i (1 + \rho_{*,\sup}^{M-2}(T_i))} \right) \le \log \left( \frac{\prod_i (1 + \rho_{*,\sup}^{M-2}(T_i))}{\prod_i (1 + \rho_{*,\inf}^{M-2}(T_i))}  \right)
	\end{equation}
	Writing $A = \prod_i (1 + \rho_{*,\inf}^{M-2}(T_i))$, $\Sigma_* = \sum_i \tau_*^{M-2}(T_i)$ and using \eqref{eq:ASigma} with $m = M-2$ we get
	\begin{equation}
		\label{eq:tau0deg1}
		\tau_0^M(T) \le \log \left( Ae^{\Sigma_*/3}/A \right) = \frac{1}{3}\Sigma_*. %+ \log \left( \frac{1 + 2Ae^{\Sigma_*/3}}{1 + 2A} \right) \le \frac{1}{3}\Sigma_* + \log \left( e^{\Sigma_*/3} \right) = \frac{2}{3}\Sigma_*
	\end{equation}
	On the other hand, writing $\Pi = \prod_i \rho_{0,\sup}^{M-2}(T_i) \le 1/2$ and using \eqref{eq:ASigma} with $m = M-2$, we get
	\begin{multline}\label{eq:taustarloglog}
		\tau_*^M(T) \le \log \left( \frac{1 - \prod_i \rho_{0,\inf}^{M-2}(T_i)}{1 - \prod_i \rho_{0,\sup}^{M-2}(T_i)} \cdot \frac{1 + 2\prod_i (1 + \rho_{*,\sup}^{M-2}(T_i))}{1 + 2\prod_i (1 + \rho_{*,\inf}^{M-2}(T_i))} \right) \\
		\le \log \left( \frac{1 - \Pi e^{-\Sigma_0}}{1 - \Pi} \right) + \log \left( \frac{\prod_i (1 + \rho_{*,\sup}^{M-2}(T_i))}{\prod_i (1 + \rho_{*,\inf}^{M-2}(T_i))}  \right) \le \log \left( 2 - e^{-\Sigma_0} \right) + \frac{1}{3}\Sigma_*.
	\end{multline}
	The basic inequality $e^{\Sigma_0} + e^{-\Sigma_0} \ge 2$ implies $\log(2- e^{-\Sigma_0}) \le \Sigma_0$, which together with \eqref{eq:taustarloglog} implies
	\begin{equation}
		\label{eq:taustardeg1}
		\tau_*^M(T) \le \Sigma_0 + \frac{1}{3}\Sigma_*.
	\end{equation}
	This, combined with \eqref{eq:tau0deg1}, implies (recall we chose $a = 13/7$)
	\begin{equation}
		\eta(T) = \frac{13}{7}\tau_0^M(T) + \tau_*^M(T) \le \Sigma_0 + \frac{20}{21}\Sigma_* \le \frac{20}{21}\sum_i \eta^{M-2}(T_i).
		\label{eq:etadeg1}
	\end{equation}
	Combining \eqref{eq:etadeg1} with \eqref{eq:etadeg2} completes the proof of Lemma~\ref{lem:etarec} with $c = \sqrt{20/21}$.
\end{proof}

Now that we know that all conditions of Theorem~\ref{thm:normality} are indeed satisfied (with $\alpha = 1$), we have the following theorem:

\begin{theorem}
Let $\tc_n$ be a conditioned Galton-Watson tree of order $n$ with offspring distribution $\xi$, where $\xi$ satisfies $\E \xi = 1$ and $0<\sigma^2:=\mathrm{Var} \xi <\infty$ as well as $\E \xi^3 < \infty$. There exist constants $\mu > 0$ and $\gamma > 0$ (both depending on $\xi$) such that
$$\frac{\log d (\tc_n) - n\mu}{\sqrt n} \dto \nc(0,\gamma^2)$$
as $n \to \infty$.
\end{theorem}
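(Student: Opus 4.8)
The convergence in distribution requires no new work: the preceding computation has verified the hypotheses of Theorem~\ref{thm:normality} for the toll function $f(T)=\log d(T)-\sum_i\log d(T_i)$ with $\alpha=1$ (using $\E\xi^3<\infty$, and $p_M\ll Mc^M$, $M_n\ll\log n$, which make \eqref{eq:thm1-4} hold), so Theorem~\ref{thm:normality} already gives $(\log d(\tc_n)-n\mu)/\sqrt n\dto\nc(0,\gamma^2)$ with $\mu=\E f(\tc)\in[0,\infty)$ and $\gamma\in[0,\infty)$. The content of the present statement is therefore the two strict inequalities $\mu>0$ and $\gamma>0$, and for both I would imitate what is done for the number of independent sets in Subsection~\ref{sub:ind}.

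For $\mu>0$ I would argue directly from the recursions for $d$, $d_0$, $d_*$. If $\deg(T)\ge1$, then each branch has a dominating set containing its own root, so $d(T_i)\ge d_0(T_i)+1$; hence $\prod_i d(T_i)>\prod_i d_0(T_i)$ and $d_0(T)=\prod_i d(T_i)-\prod_i d_0(T_i)\ge1$, which gives $d(T)=d_0(T)+\prod_i(d(T_i)+d_*(T_i))\ge 1+\prod_i d(T_i)$ and therefore $f(T)=\log\big(d(T)/\prod_i d(T_i)\big)>0$. Since $f(\bullet)=\log 1=0$ and $\Prob(\tc=\bullet)=\Prob(\xi=0)<1$ (because $\E\xi=1$), it follows that $\mu=\E f(\tc)>0$; combined with Proposition~\ref{prop:mean} this shows $\E\log d(\tc_n)=n\mu+o(\sqrt n)$, so the centering by $n\mu$ in the theorem can equivalently be taken to be $\E\log d(\tc_n)$.

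For $\gamma>0$, the one genuinely new ingredient is a linear representation of $d(T)$ in a fringe subtree. If $S$ is a proper fringe subtree of $T$ rooted at $r$, $T'$ is $T$ with $S$ deleted, and $v$ is the parent of $r$ in $T'$, then distinguishing the three cases $r\in D$; $r\notin D$, $v\notin D$; $r\notin D$, $v\in D$ for a dominating set $D$ of $T$ yields
\[
 d(T)=P(T')\big(d(S)-d_0(S)\big)+d(T')\,d_0(S)+R(T')\,d_*(S),
\]
where $P(T')$ counts the subsets of $V(T')$ that dominate $V(T')\setminus\{v\}$ and $R(T')$ counts those among them that also contain $v$; these depend on $T'$ only and satisfy $P(T')\ge d(T')\ge1$ and $P(T')\ge R(T')\ge1$. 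One then chooses two trees $S_1,S_2$ of equal order with $\Prob(\tc=S_1)>0$, $\Prob(\tc=S_2)>0$ such that $d(S_1)-d_0(S_1)>d(S_2)-d_0(S_2)$ and $d_0(S_1)>d_0(S_2)$; taking both $S_i$ to have a leaf at depth one makes $d_*(S_1)=d_*(S_2)=0$, so the displayed identity reduces to $d(T)=P(T')(d(S)-d_0(S))+d(T')\,d_0(S)$ and shows that replacing an occurrence of $S_2$ by $S_1$ multiplies $d(T)$ by a factor lying in a fixed interval $[\eta,\eta']$ with $\eta>1$ (hence shifts $\log d(T)$ by an additive amount in $[\log\eta,\log\eta']$), uniformly over the rest of $T$. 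Such a pair $S_1,S_2$ can be exhibited by a construction analogous to the one in Subsection~\ref{sub:ind}. From here the argument is exactly parallel to that in Subsection~\ref{sub:ind}: mark every fringe occurrence of $S_1$ or $S_2$ in $\tc_n$ to get $\tc_n^*$ with some number $m$ of marks; conditionally on $\tc_n^*$ the $m$ filled-in subtrees are i.i.d.\ on $\{S_1,S_2\}$ (here the equal orders are used); an iterated application of the law of total variance gives $\Var(\log d(\tc_n)\mid\tc_n^*)\gg m$ uniformly; and then $\Var(\log d(\tc_n))\gg\E m=\E F_{S_1}(\tc_n)+\E F_{S_2}(\tc_n)\gg n$ since fringe-subtree counts have linear means. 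This forces $\gamma>0$.

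I expect the main obstacle to be the third paragraph: carefully deriving the linear representation of $d(T)$ (three parameters $d,d_0,d_*$ rather than the two that occur for independent sets, and checking the coefficients $P(T'),d(T'),R(T')$ are non-negative with the relevant ones at least $1$) and, in particular, producing explicit equal-order trees $S_1,S_2$ of positive probability with the required strict monotonicity of $d-d_0$ and $d_0$ — slightly more delicate than for independent sets because of the $d_*$-term and the accompanying ``no leaf at depth one'' bookkeeping.
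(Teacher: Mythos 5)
Your route coincides with the paper's: the convergence statement is Theorem~\ref{thm:normality} applied to $f(T)=\log d(T)-\sum_i\log d(T_i)$ with $\alpha=1$, and positivity of $\gamma$ is obtained by transporting the replacement argument of Subsection~\ref{sub:ind}. Be aware, though, that by writing ``the preceding computation has verified the hypotheses'' you are assuming precisely the part where the paper's real work lies: the recursions for $d,d_0,d_*$, the bounds $\tau_0^1(T)\ll\deg(T)$ and $\tau_*^2(T)\ll w_1(T)+w_2(T)$, and above all the contraction Lemma~\ref{lem:etarec} for $\eta^M=a\tau_0^M+\tau_*^M$ (needed because $\tau_*^1$ can be infinite), which is what yields $p_M\ll Mc^M$. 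Your proposal is therefore a proof of the two positivity claims modulo that verification.

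On those two claims your contributions are correct and go beyond what the paper writes down. The identity $d(T)=P(T')\bigl(d(S)-d_0(S)\bigr)+d(T')\,d_0(S)+R(T')\,d_*(S)$ checks out: splitting by whether $r\in D$ and, if not, whether $v\in D$ gives $P(T')\bigl(d(S)-d_0(S)\bigr)+\bigl(d_0(S)+d_*(S)\bigr)R(T')+d_0(S)Q(T')$, where $Q(T')$ counts dominating sets of $T'$ avoiding $v$, and $Q(T')+R(T')=d(T')$ rearranges this to your formula; this is exactly the ingredient the paper's ``mutatis mutandis'' remark leaves to the reader. Likewise your observation that $d_0(T)\ge 1$, hence $f(T)\ge\log\bigl(1+1/\prod_i d(T_i)\bigr)>0$, whenever $\deg(T)\ge 1$ correctly settles $\mu>0$ (and note only the lower bound $\log\eta$ on the gap is needed for the variance decomposition, so the $[\eta,\eta']$ bracketing is more than enough). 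The one item you defer, the explicit pair $S_1,S_2$, is also not supplied by the paper; just note that the pair from Subsection~\ref{sub:ind} (complete $d$-ary tree versus caterpillar) cannot be reused verbatim, since your reduction to two terms requires both trees to have a leaf at depth one, and in the strictly $d$-ary case one cannot simply attach a single pendant leaf to the root, so a modified (though routine) construction with both strict inequalities $d-d_0$ and $d_0$ verified is still owed.
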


\subsection{Tree reductions}
In a recent paper \cite{HHKP17},  Hackl, Heuberger, Kropf, and Prodinger studied various natural tree reduction processes based on repeatedly deleting parts of the tree by certain operations until the root is isolated. Before we describe these processes, we first need to define a few terms. A leaf is called an \textit{old leaf} if it is the leftmost child of its parent node (we assume that the root of the tree cannot be an old leaf). A maximal fringe subtree rooted at a node other that the root with the property that each of its nodes has outdegree 0 or 1 is simply referred to as a \textit{path}. An \textit{old path} is maximal fringe subtree rooted at a node other that the root with the property that each of its nodes is the leftmost child of its parent. In \cite{HHKP17}, the tree is reduced until the root is isolated by repeating one of the following operations:     
\begin{enumerate}
\item[(a)] Leaf-reduction: all leaves are deleted in each round,
\item[(b)] Old leaf-reduction: all old leaves are deleted in each round,
\item[(c)] Path-reduction: all paths are deleted in each round,
\item[(d)] Old path-reduction: all old paths are deleted in each round.
\end{enumerate}
In any of the above operations when a node is deleted, the incident edges are also deleted from the tree. An example is given in Figure~\ref{fig:reduction}, where, for each of the four operations, the parts of the tree that are about to be deleted in the next round of reduction are dashed.

\begin{figure}[h]
\begin{minipage}{.2\textwidth}
\begin{tikzpicture}[main_node/.style={circle,draw,minimum size=0.4em,inner sep=2pt]},scale=0.5]
      %\node[main_node] (1) at (0,0) {1};
    %\node[fill=red!20] (4) at (0,2){};
    \node[main_node, draw=none] (1) at (-3,0) {};
    \node[main_node, draw=none,label=below:(a) Leaves  \, \, ] (1) at (0,-2) {};
    %cycle 
    \node[main_node,fill] (0) at (0, 3){};
    \node[main_node,fill] (1) at (1,2) {};
    \node[main_node,fill] (11) at (0,1) {};
    \node[main_node, dashed] (12) at (0,0) {};
    \node[main_node, dashed] (13) at (1,1) {};
    \node[main_node,fill] (2) at (-1, 2){};
    \node[main_node, fill] (3) at (-2, 1){};
    \node[main_node, fill] (4) at (-3, 0){};
    \node[main_node,dashed] (6) at (-3, -1){};
    \node[main_node, fill] (14) at (2, 1){};
    \node[main_node, fill] (15) at (2, 0){};
    \node[main_node, dashed] (16) at (2, -1){};
    \node[main_node,dashed] (5) at (-1, 0){};
    \draw[dashed] (4)--(6);
%    \draw (5)--(7);
    \draw (4)--(3)--(2)--(0)--(1);
    \draw[dashed] (3)--(5);
    \draw (1)--(14);
    \draw (14)--(15);
    \draw[dashed](15)--(16);
    \draw[dashed] (13)--(1);    
    \draw (1)--(11);
    \draw[dashed] (11)--(12);
\end{tikzpicture}
\end{minipage}
\begin{minipage}{.2\textwidth}
\begin{tikzpicture}[main_node/.style={circle,draw,minimum size=0.4em,inner sep=2pt]},scale=0.5]
      %\node[main_node] (1) at (0,0) {1};
    %\node[fill=red!20] (4) at (0,2){};
    \node[main_node, draw=none] (1) at (-3,0) {};
    \node[main_node, draw=none,label=below:(b) Old leaves \, \,] (1) at (0,-2) {};
    %cycle 
    %cycle 
    \node[main_node,fill] (0) at (0, 3){};
    \node[main_node,fill] (1) at (1,2) {};
    \node[main_node,fill] (11) at (0,1) {};
    \node[main_node, dashed] (12) at (0,0) {};
    \node[main_node,fill] (13) at (1,1) {};
    \node[main_node,fill] (2) at (-1, 2){};
    \node[main_node, fill] (3) at (-2, 1){};
    \node[main_node, fill] (4) at (-3, 0){};
    \node[main_node,dashed] (6) at (-3, -1){};
    \node[main_node, fill] (14) at (2, 1){};
    \node[main_node, fill] (15) at (2, 0){};
    \node[main_node, dashed] (16) at (2, -1){};
    \node[main_node, fill] (5) at (-1, 0){};
    \draw[dashed] (4)--(6);
%    \draw (5)--(7);
    \draw (4)--(3)--(2)--(0)--(1);
    \draw (3)--(5);
    \draw (1)--(14);
    \draw (14)--(15);
    \draw[dashed](15)--(16);
    \draw (13)--(1);    
    \draw (1)--(11);
    \draw[dashed] (11)--(12);
\end{tikzpicture}
\end{minipage}
\begin{minipage}{.2\textwidth}
\begin{tikzpicture}[main_node/.style={circle,draw,minimum size=0.4em,inner sep=2pt]},scale=0.5]
      %\node[main_node] (1) at (0,0) {1};
    %\node[fill=red!20] (4) at (0,2){};
    \node[main_node, draw=none] (1) at (-3,0) {};
    \node[main_node, draw=none,label=below:(c) Paths  \, \,] (1) at (0,-2) {};
    %cycle 
    \node[main_node,fill] (0) at (0, 3){};
    \node[main_node,fill] (1) at (1,2) {};
    \node[main_node,dashed] (11) at (0,1) {};
    \node[main_node, dashed] (12) at (0,0) {};
    \node[main_node, dashed] (13) at (1,1) {};
    \node[main_node,fill] (2) at (-1, 2){};
    \node[main_node, fill] (3) at (-2, 1){};
    \node[main_node, dashed] (4) at (-3, 0){};
    \node[main_node,dashed] (6) at (-3, -1){};
    \node[main_node, dashed] (14) at (2, 1){};
    \node[main_node, dashed] (15) at (2, 0){};
    \node[main_node, dashed] (16) at (2, -1){};
    \node[main_node,dashed] (5) at (-1, 0){};
    \draw[dashed] (4)--(6);
%    \draw (5)--(7);
    \draw[dashed] (4)--(3);
    \draw (3)--(2)--(0)--(1);
    \draw[dashed] (3)--(5);
    \draw[dashed] (1)--(14)--(15)--(16);
    \draw[dashed] (13)--(1)--(11)--(12);    
\end{tikzpicture}
\end{minipage}
\begin{minipage}{.2\textwidth}
\begin{tikzpicture}[main_node/.style={circle,draw,minimum size=0.4em,inner sep=2pt]},scale=0.5] 
      %\node[main_node] (1) at (0,0) {1};
    %\node[fill=red!20] (4) at (0,2){};
    \node[main_node, draw=none] (1) at (-3,0) {};
    \node[main_node, draw=none,label=below: (d) Old paths  \, \, ] (1) at (0,-2) {};
    %cycle 
    \node[main_node,fill] (0) at (0, 3){};
    \node[main_node,fill] (1) at (1,2) {};
    \node[main_node,dashed] (11) at (0,1) {};
    \node[main_node, dashed] (12) at (0,0) {};
    \node[main_node, fill] (13) at (1,1) {};
    \node[main_node,fill] (2) at (-1, 2){};
    \node[main_node, fill] (3) at (-2, 1){};
    \node[main_node, dashed] (4) at (-3, 0){};
    \node[main_node,dashed] (6) at (-3, -1){};
    \node[main_node, fill] (14) at (2, 1){};
    \node[main_node, dashed] (15) at (2, 0){};
    \node[main_node, dashed] (16) at (2, -1){};
    \node[main_node,fill] (5) at (-1, 0){};
    \draw[dashed] (4)--(6);
%    \draw (5)--(7);
    \draw[dashed] (4)--(3);
    \draw (3)--(2)--(0)--(1);
    \draw (3)--(5);
    \draw (1)--(14);
    \draw[dashed] (14)--(15)--(16);
    \draw (13)--(1);
    \draw[dashed] (1)--(11)--(12);
%    \node[main_node,draw=none, label=below:stack] (17) at (0,-2);
\end{tikzpicture}
\end{minipage}

\caption{Tree reduction operations}\label{fig:reduction}
\end{figure}
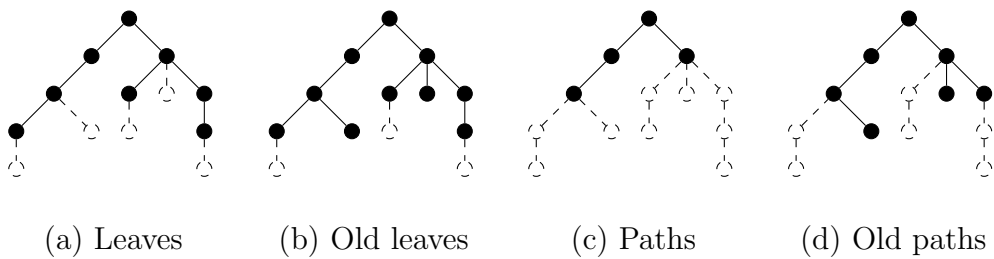
For a given positive integer $r$, and for a tree $T$, let $X_{r}(T)$  be the number of nodes in the reduced tree after the first $r$ steps of one of the above reductions. The authors of \cite{HHKP17} proved asymptotic estimates for the mean and variance as well as a central limit theorem for $X_{r}(\tc_n)$ for the uniform random  plane (=ordered) tree on $n$ nodes in the cases (a)--(c). For the case (d), they gave asymptotic estimates for mean and variance of $X_{r}(\tc_n)$, but left the central limit theorem as an open problem. We show that asymptotic normality of the functional $X_{r}(\tc_n)$ can also be derived from Theorem~\ref{thm:normality} for conditioned Galton-Watson trees, of which the uniform random plane tree is a special case. We let   
$$
F_r(T)=|T|-X_r(T),
$$   
which corresponds to the number of nodes of $T$ other than the root that are deleted after $r$ steps according to one of the reductions above. The functional $F_r$ is  additive with toll function $f_r$, where
$$
f_r(T)=\sum_{j}\eta_T(T_j)
$$ 
and the sum is over all branches $T_j$, with
$$
\eta_T(T_j)=
\begin{cases}
& 1 \, \text{ if the root of $T_j$ is deleted within the first $r$ steps,}\\
& 0 \, \text{ otherwise.}
\end{cases}
$$
We can immediately see that 
$$
0 \leq f_r(T)\leq \deg (T).
$$
Hence, we can take $\alpha=1$ which implies that our offspring distribution $\xi$ is required to have a finite third moment.  In fact, $f_r(T)$ is upper bounded by $r$ for the old leaf- and old path-reductions since the children of the root can only be deleted one at a time. However, the bound is clearly sharp for the leaf- and path-reductions. 

Next, we show that  $f_r$ sastisfies the remaining conditions (i.e., \eqref{eq:thm1-2}, \eqref{eq:thm1-3}, \eqref{eq:thm1-4}) of Theorem~\ref{thm:normality} in all four cases. For a rooted tree $T$, we denote by $T^*$ the planted tree where the root of $T$ is connected to a new node, which becomes the root of $T^*$.   Let 
$
\kappa=\min \{k\geq 2 \, : \, \prob{\xi=k}>0\}
$
(this value must exist under our assumptions on $\xi$), and let $T_0$ be the complete $\kappa$-ary tree  of depth $r$. It is easy to verify that $X_r(T_0^*)\neq 1$, i.e.~$T_0^*$ is not reduced to the root in $r$ steps, and 
\begin{equation}\label{eq:red1}
\prob{\tc= T_0}>0. 
\end{equation}
For each positive integer $M$, let $\mathcal{B}_{M}$ be the set of all trees $T$ (not necessarily finite) of height at least $M-1$ such that $X_r((T^{(M-1)})^*)=1$ (i.e.~the tree $T^{(M-1)}$ vanishes after the first $r$ steps of the reduction). It is important to notice here that a rooted tree $T$ is not reduced to a single node after the first $r$ steps of the reduction if  the fixed tree $T_0$ appears as a \textit{subtree} of $T$ (here, by \textit{subtree}, we mean a subtree of the form $T_{v}^{(r)}$ and some node $v$ of $T$). This observation is key in the proof of the next lemma.
\begin{lemma}\label{lem:red} There is a positive constant $c<1$ that depends only on $\xi$ and $r$, such that  
	$$
	\prob{\tc \in \mathcal{B}_M} \ll c^{M} \text{ and }\, \prob{\htc \in \mathcal{B}_M} \ll c^{M}. 
	$$
\end{lemma}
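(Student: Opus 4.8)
The plan is to bound $\prob{\htc\in\mathcal B_M}$ first, and to obtain the bound for $\tc$ from it essentially for free. Observe that membership in $\mathcal B_M$ depends only on $T^{(M-1)}$: the condition $X_r((T^{(M-1)})^*)=1$ manifestly does, and ``$T$ has height at least $M-1$'' is the same as ``$w_{M-1}(T^{(M-1)})\ge1$''. So write $\mathcal B_M=\{T:\ T^{(M-1)}\in\mathcal A\}$, where $\mathcal A$ is the corresponding set of trees, each of which has height exactly $M-1$ and hence satisfies $w_{M-1}(S)\ge1$. Using the Kesten reweighting $\prob{\htc^{(M-1)}=S}=w_{M-1}(S)\prob{\tc^{(M-1)}=S}$ we then get
\[
\prob{\tc\in\mathcal B_M}=\sum_{S\in\mathcal A}\prob{\tc^{(M-1)}=S}\le\sum_{S\in\mathcal A}w_{M-1}(S)\,\prob{\tc^{(M-1)}=S}=\prob{\htc\in\mathcal B_M},
\]
so it is enough to bound $\prob{\htc\in\mathcal B_M}$.

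Next I would reformulate the event in terms of $T_0$. Set $\delta:=\prob{\tc^{(r)}=T_0}$, which is positive because $\{\tc=T_0\}\subseteq\{\tc^{(r)}=T_0\}$ and $\prob{\tc=T_0}>0$ by \eqref{eq:red1}. I claim that $\htc\in\mathcal B_M$ forbids any node $w$ of $\htc$ at depth $d\le M-1-r$ from satisfying $\htc_w^{(r)}=T_0$. Indeed, such a $w$ survives in $\htc^{(M-1)}$ and hence is a node of $(\htc^{(M-1)})^*$; planting does not change the subtree rooted at $w$, and $(\htc^{(M-1)})_w=\htc_w^{(M-1-d)}$, so, since $M-1-d\ge r$, the $r$-level restriction of this subtree equals $\htc_w^{(r)}=T_0$. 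Thus $T_0$ occurs as a subtree of $(\htc^{(M-1)})^*$ in the sense of the observation stated just before the lemma, which forces $X_r((\htc^{(M-1)})^*)\ne1$, i.e.\ $\htc\notin\mathcal B_M$.

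The third step is the spine argument. I would use the size-biased decomposition of $\htc$: along the spine $v_0=\text{root},v_1,v_2,\dots$ the numbers of children $\hat\xi_0,\hat\xi_1,\dots$ are i.i.d.\ with $\prob{\hat\xi_i=k}=k\,\prob{\xi=k}$, and to the non-spine children are attached i.i.d.\ copies of $\tc$, independently of the $\hat\xi_i$. For $0\le i\le M-2-r$ let $B_i$ be the GW bush attached at the (leftmost, say) non-spine slot of $v_i$, and set $A_i:=\{\hat\xi_i\ge2\}\cap\{B_i^{(r)}=T_0\}$. These events are independent, and
\[
\prob{A_i}=\prob{\hat\xi\ge2}\,\delta\ \ge\ \kappa\,\prob{\xi=\kappa}\,\delta=:\delta'>0,
\]
positivity following from the definition of $\kappa$ and \eqref{eq:red1}. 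If $A_i$ occurs then $v_i$ has a non-spine child $w$ at depth $i+1\le M-1-r$ with $\htc_w^{(r)}=B_i^{(r)}=T_0$, which by the previous step is impossible once $\htc\in\mathcal B_M$. Hence $\mathcal B_M\subseteq\bigcap_{i=0}^{M-2-r}A_i^c$ and, for $M\ge r+1$,
\[
\prob{\htc\in\mathcal B_M}\le\prod_{i=0}^{M-2-r}\bigl(1-\prob{A_i}\bigr)\le(1-\delta')^{M-1-r}=(1-\delta')^{-(r+1)}c^M,\qquad c:=1-\delta'\in(0,1),
\]
while for $M<r+1$ the bound $\prob{\htc\in\mathcal B_M}\le1\le(1-\delta')^{-(r+1)}c^M$ is trivial. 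Together with $\prob{\tc\in\mathcal B_M}\le\prob{\htc\in\mathcal B_M}$ this gives both assertions, with constants depending only on $\xi$ and $r$.

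The main obstacle is the reformulation step: one must be careful that the putative copy of $T_0$ is not destroyed by the planting operation or by the truncation to $M-1$ levels, which is exactly what pins down the depth restriction $d\le M-1-r$ and hence the usable range $0\le i\le M-2-r$ of spine vertices. Once that is set up correctly, the remaining work is routine bookkeeping with the standard spine construction of $\htc$.
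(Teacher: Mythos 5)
Your proof is correct, but it takes a genuinely different route from the paper's. The paper runs a renewal argument at scale $r$: conditioning on $\tc^{(r)}$ (resp.\ $\htc^{(r)}$), it notes that membership in $\mathcal{B}_M$ forces $\tc^{(r)}\neq T_0$ and forces every subtree hanging at level $r$ to lie in $\mathcal{B}_{M-r}$, yielding $\prob{\tc\in\mathcal{B}_M}\le q\,\prob{\tc\in\mathcal{B}_{M-r}}$ with $q=\prob{\tc\neq T_0}<1$, and then iterates $\lfloor M/r\rfloor$ times; the two random trees are handled by parallel arguments. You instead (i) observe that membership in $\mathcal{B}_M$ is determined by $T^{(M-1)}$ and that every tree in $\mathcal{B}_M$ has $w_{M-1}\ge 1$, so the Kesten reweighting $\prob{\htc^{(M-1)}=S}=w_{M-1}(S)\prob{\tc^{(M-1)}=S}$ gives $\prob{\tc\in\mathcal{B}_M}\le\prob{\htc\in\mathcal{B}_M}$ outright (the paper never uses this monotonicity), and (ii) bound the $\htc$ probability in one shot via the spine construction, exhibiting $M-1-r$ independent events of probability at least $\delta'=\prob{\hat\xi\ge2}\,\prob{\tc^{(r)}=T_0}>0$, each of which plants a copy of $T_0$ at depth at most $M-1-r$ and hence, by the observation preceding the lemma, excludes $\mathcal{B}_M$. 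Your depth bookkeeping (the restriction $d\le M-1-r$ so that neither planting nor truncation destroys the copy of $T_0$, hence the range $0\le i\le M-2-r$) is exactly right, and the independence of the $A_i$ is the standard feature of Kesten's construction; one should just say explicitly that $B_i$ is realised as an i.i.d.\ copy of $\tc$ independent of $\hat\xi_i$ so that $\prob{A_i}=\prob{\hat\xi\ge2}\,\delta$, but this is routine. What the two approaches buy: the paper's recursion needs only the branching property at level $r$ and no explicit spine, whereas yours replaces the iteration by a clean product bound, derives the $\tc$ estimate for free from the $\htc$ one, and makes the source of the exponential decay (independent opportunities along the spine) more transparent; both produce a constant $c<1$ depending only on $\xi$ and $r$, though the numerical values differ ($q^{1/r}$ versus $1-\delta'$).
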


\begin{proof}
Without loss of generality we assume $M \ge r$.
	We start with the first estimate. We notice that for $\tc$ to be in $\mathcal{B}_M$, $\tc^{(r)}$ must not be equal to $T_0$. Moreover, the assumption $M - 1 \ge r$ implies $w_r(\tc) \ge 1$. So  
	\begin{align*}
		\prob{\tc \in \mathcal{B}_M}=\sum_{T\neq T_0 : w_r(T)\ge 1}\prob{\tc^{(r)}=T}\prob{\tc \in \mathcal{B}_M \, |\, \tc^{(r)}=T}.
	\end{align*}
	Conditioning on the event $\{\tc^{(r)}=T \}$, the rest of $\tc$ is a forest consisting of $w_r(T)$ independent copies of $\tc$. If $\tc \in \mathcal B_M$, then all of them must belong to $\mathcal{B}_{M-r}$, hence we obtain 
	\begin{align*}
		\prob{\tc \in \mathcal{B}_M}
		& \leq \sum_{T\neq T_0 : w_r(T) \ge 1}\prob{\tc^{(r)}=T} \prob{\tc\in \mathcal{B}_{M-r}}^{w_r(T)}\\
		& \leq \sum_{T\neq T_0}\prob{\tc^{(r)}=T} \prob{\tc\in \mathcal{B}_{M-r}}\\
		& = \prob{\tc\in \mathcal{B}_{M-r}} \prob{\tc^{(r)} \neq T_0}\\
		& \leq  \prob{\tc\in \mathcal{B}_{M-r}} \prob{\tc \neq T_0} =: q\prob{\tc \in \mathcal{B}_{M-r}},
	\end{align*}
	where $q < 1$ by \eqref{eq:red1}.
Iterating this inequality gives
	\begin{equation}\label{eq:red3}
	\prob{\tc \in \mathcal{B}_M}\leq q^{\lfloor M/r\rfloor} \ll c^{M},
	\end{equation}
	where $c :=q^{1/r} < 1$, proving the first estimate.

	For the second estimate, we also begin in a similar fashion, i.e.~we have
	\begin{align*}
		\prob{\htc \in \mathcal{B}_M}=\sum_{T\neq T_0 : w_r(T) \ge 1}\prob{\htc^{(r)}=T}\prob{\htc \in \mathcal{B}_M \, |\, \htc^{(r)}=T}.
	\end{align*}
	Here, when conditioning on the event $\{\htc^{(r)}=T \}$, the rest of $\htc$ is a forest consisting of $w_r(T)-1$ independent copies of $\tc$ and an independent copy of $\htc$. Thus,
	\begin{multline*}
	\prob{\htc \in \mathcal{B}_M}  
	\leq\sum_{T\neq T_0 : w_r(T) \ge 1}\prob{\htc^{(r)}=T} \prob{\tc\in \mathcal{B}_{M-r}}^{w_r(T)-1} \prob{\htc\in \mathcal{B}_{M-r}} \\
	\leq\sum_{T\neq T_0}\prob{\htc^{(r)}=T}\prob{\htc\in \mathcal{B}_{M-r}} \leq \prob{\tc \neq T_0} \prob{\htc\in \mathcal{B}_{M-r}}
	\end{multline*}
and similarly as in the previous case, using \eqref{eq:red1} we obtain
	$$
	\prob{\htc \in \mathcal{B}_M}\ll c^{M},
	$$ 
	which completes the proof.
\end{proof}

For a finite tree $T$, the only possibility for which $f_r(T\po)\neq f_r(T)$ is when there is a root branch $T_j$ of $T$ such that  $T_j^{(M-1)}$  vanishes after the first $r$ steps of the reduction of $T^{(M)}$, but $T_j$ does not vanish after the first $r$ steps of the reduction of $T$. This means that if $f_r(T\po)\neq f_r(T)$, then $T$ must have a branch in $\mathcal{B}_M$. Therefore, we have
\begin{align*}
	\prob{f_r(\tc\po)\neq f_r(\tc)}\leq \sum_{k=1}^{\infty}k\prob{\xi=k} \prob{\tc\in \mathcal{B}_M } \ll c^M.
\end{align*}
The estimate on the right follows from Lemma~\ref{lem:red}. As an immediate consequence of this, we have 
$$
\prob{f_r(\tc_n\po)\neq f_r(\tc_n)}\leq \frac{\prob{f_r(\tc\po)\neq f_r(\tc)}}{\prob{|\tc|=n}}\ll n^{3/2} \, c^M,
$$
using a well-known asymptotic $\prob{|\tc| = n} = \Theta(n^{-3/2})$, see \cite[(4.13)]{J16}.
Hence,
\begin{equation}\label{eq:red4}
\E |f_r(\tc_n\po)-f_r(\tc_n)|\ll n^{3/2}c^{M}\, \max_{|T|=n}|f_r(T\po)-f_r(T)|\ll n^{5/2}c^M.
\end{equation}

Let us denote by $\mathcal{E}_M$ the event $\bigcup_{N>M}\{f_r(\htc\po)\neq\E( f_r(\htc^{(N)})\, | \, \htc\po)\}$. Then, for any $N\geq M$, we have 
$$
\Big|f_r(\htc\po)-\E( f_r(\htc^{(N)})\, | \, \htc\po)\Big|\ll \deg(\htc\po) \I_{\mathcal{E}_M}.
$$
For $\htc$ to be in $\mathcal{E}_M$, $\htc$ must have a root branch in $\mathcal{B}_M$. Therefore, 
\begin{multline}
	\E \Big|f_r(\htc\po)-\E( f_r(\htc^{(N)})\, | \, \htc\po)\Big|\\ 
	\ll \sum_{k=1}^{\infty}k\prob{\deg(\htc)=k}\left((k-1)\prob{\tc\in \mathcal{B}_M } +\prob{\htc\in \mathcal{B}_M } \right).
\end{multline}
In view of Lemma~\ref{lem:red}, we have 
\begin{equation}\label{eq:red5}
\E \Big|f_r(\htc\po)-\E( f_r(\htc^{(N)})\, |\, \htc\po)\Big| \ll c^M\sum_{k=1}^{\infty}k^2\prob{\deg(\htc)=k} \ll c^M, 
\end{equation}
since $\E(\deg(\htc)^2) <\infty$ if $\E \xi^3<\infty$, see \eqref{eq:prem3}. The estimates \eqref{eq:red4} and \eqref{eq:red5} confirm that $f_r$ satisfies all conditions of Theorem~\ref{thm:normality} for a suitable choice of $p_M$ and $M_n$ with $p_M  \ll c_2^M$ for some $c_2 \in (c,\, 1)$  and $M_n$ satisfying $M_n \ll \log n$.

\begin{theorem}
Let $\tc_n$ be a conditioned Galton-Watson tree of order $n$ with offspring distribution $\xi$, where $\xi$ satisfies $\E \xi = 1$ and $0<\sigma^2:=\mathrm{Var} \xi <\infty$ as well as $\E \xi^3 < \infty$. Consider one of the four reduction procedures described at the beginning of this subsection, and fix a positive integer $r$, so that $F_r(T)$ denotes the number of deleted nodes after $r$ steps of the procedure. There exist constants $\mu > 0$ and $\gamma \geq 0$ (depending on $\xi$, the specific procedure and the value of $r$) such that
$$\frac{F_r (\tc_n) - n\mu}{\sqrt n} \dto \nc(0,\gamma^2)$$
as $n \to \infty$.
\end{theorem}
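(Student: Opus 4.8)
The plan is to deduce the statement directly from Theorem~\ref{thm:normality} applied with $\alpha=1$, since all the necessary ingredients have been assembled in the discussion preceding the theorem. Recall that $F_r$ is additive with toll function $f_r$ satisfying $0\leq f_r(T)\leq \deg(T)$, so that \eqref{eq:thm1-1} holds with $\alpha=1$; this is the only place where the hypothesis $\E\xi^3<\infty$ is used. The two approximation conditions \eqref{eq:thm1-2} and \eqref{eq:thm1-3} have been verified in \eqref{eq:red5} and \eqref{eq:red4} respectively: both are ultimately consequences of the exponential bounds $\prob{\tc\in\mathcal{B}_M}\ll c^M$ and $\prob{\htc\in\mathcal{B}_M}\ll c^M$ from Lemma~\ref{lem:red}, together with the crude deterministic estimate $|f_r(T)-f_r(T\po)|\leq \deg(T)$ and the $\Theta(n^{-3/2})$ asymptotics of $\prob{|\tc|=n}$.

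First I would pin down the sequences $(p_M)_{M\geq 1}$ and $(M_n)_{n\geq 1}$: take $p_M=c_2^M$ for a fixed $c_2\in(c,1)$, where $c$ is the constant of Lemma~\ref{lem:red}, and $M_n=\lceil A\log n\rceil$ for a sufficiently large constant $A=A(\xi,r)$. For \eqref{eq:thm1-3} one needs $\E|f_r(\tc_n)-f_r(\tc_n^{(M_n)})|\leq p_{M_n}$; by \eqref{eq:red4} the left-hand side is $\ll n^{5/2}c^{M_n}$, while $p_{M_n}=c_2^{M_n}$, so the inequality holds once $A\log(c_2/c)>5/2$, i.e.\ for $A$ large enough — the polynomial loss $n^{5/2}$ is absorbed precisely because $M_n$ is a large enough multiple of $\log n$. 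Condition \eqref{eq:thm1-2} holds with the same $p_M$ directly by \eqref{eq:red5}, which is uniform in $N\geq M$.

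Next I would check \eqref{eq:thm1-4}. With these choices and $\alpha=1$ we have $a_n=n^{-1/2}\bigl(n\,p_{M_n}+M_n^2\bigr)$, where $p_{M_n}=c_2^{\lceil A\log n\rceil}\ll n^{-A\log(1/c_2)}$ and $M_n^2\ll(\log n)^2$. Enlarging $A$ further, if needed, so that also $A\log(1/c_2)\geq 2$, we get $n\,p_{M_n}\ll n^{-1}$, hence $a_n\ll n^{-3/2}+n^{-1/2}(\log n)^2\to 0$ and $\sum_{n\geq 1}a_n/n\ll\sum_{n\geq 1}\bigl(n^{-5/2}+n^{-3/2}(\log n)^2\bigr)<\infty$. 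Thus every hypothesis of Theorem~\ref{thm:normality} is satisfied, and it yields $(F_r(\tc_n)-n\mu)/\sqrt n\dto\nc(0,\gamma^2)$ with $\mu=\E f_r(\tc)$ and $0\leq\gamma<\infty$. Finally, $\mu>0$: since $f_r\geq 0$ and $f_r(T)\geq 1$ for every finite tree $T$ having a root branch whose root is removed within the first $r$ rounds of the reduction, and such trees (for instance a root with a single leaf child, in all four procedures) occur with positive probability under $\xi$, we get $\mu=\E f_r(\tc)>0$.

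The main point to note about the difficulty is that, because Lemma~\ref{lem:red} and the estimates \eqref{eq:red4}--\eqref{eq:red5} are already in place, the proof of the theorem itself is essentially bookkeeping. The genuinely new input — and the step I would expect to be the crux of the whole argument — is the exponential decay in Lemma~\ref{lem:red}, which rests on the observation that the presence anywhere in $T$ of a single fixed blocking subtree $T_0$ (the complete $\kappa$-ary tree of depth $r$) prevents $T^{(M-1)}$ from being reduced to its root, combined with the self-similarity of the (size-biased) Galton--Watson tree, which lets one iterate this over the $w_r$ independent copies hanging below level $r$. The only mild subtlety in the final assembly is choosing the constant $A$ in $M_n=\lceil A\log n\rceil$ large enough to kill the polynomial loss $n^{5/2}$ that arises in \eqref{eq:red4} from the reciprocal of $\prob{|\tc|=n}$ and from the bound $\max_{|T|=n}|f_r(T\po)-f_r(T)|\leq n$.
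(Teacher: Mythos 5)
Your proposal is correct and follows essentially the same route as the paper: the theorem is obtained by feeding the already-established bounds \eqref{eq:red4}, \eqref{eq:red5} and Lemma~\ref{lem:red} into Theorem~\ref{thm:normality} with $\alpha=1$, and your explicit choice $p_M=c_2^M$, $M_n=\lceil A\log n\rceil$ with $A$ large enough to absorb the $n^{5/2}$ loss is exactly what the paper leaves implicit in the phrase ``for a suitable choice of $p_M$ and $M_n$''. One small correction: your witness for $\mu=\E f_r(\tc)>0$, a root with a single leaf child, need not occur with positive probability, since $\Prob(\xi=1)$ may vanish (e.g.\ for binary trees, where $\xi$ is supported on $\{0,2\}$); replace it by the star with $\kappa=\min\{k\ge 2:\Prob(\xi=k)>0\}$ leaf children, whose leftmost child is deleted in the first round under all four procedures, and the positivity argument goes through.
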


We remark that there are some (trivial) degenerate cases for this example: consider for instance the leaf reduction procedure applied to binary trees. The number of nodes removed in the first step is deterministic since the number of leaves in binary trees is.

\bibliographystyle{plain}

\end{document}